\documentclass[11pt]{article}
\usepackage{amssymb,amsmath,amsthm,amsfonts, commath, mathrsfs, mathtools, secdot}
\usepackage[T1]{fontenc}		
\usepackage[a4paper, margin=1in]{geometry}

\usepackage{bbm}
\usepackage{extarrows}
\usepackage{enumerate}
\usepackage{multicol}	
\usepackage{fge}
\usepackage{oldgerm}
\usepackage{hyperref} 
\usepackage{paralist} 
\usepackage{accents}

\usepackage{tikz}
\usetikzlibrary{datavisualization}
\usetikzlibrary{datavisualization.formats.functions}
\usepackage{pgfplots}
\usepackage{tkz-fct}
\pgfplotsset{every axis/.append style={
    axis x line=middle,    
    axis y line=middle,    
    axis line style={<->}, 
    xlabel={$x$},          
    ylabel={$y$},          
    },
    cmhplot/.style={color=blue,mark=none,line width=1pt,<->},
    soldot/.style={color=blue,only marks,mark=*},
    holdot/.style={color=blue,fill=white,only marks,mark=*},
}
    
\tikzset{>=stealth}

\usepackage[natbib=true, backend=biber, style=numeric]{biblatex}
\usepackage{csquotes}
\sectiondot{subsection}
\usepackage{xcolor}

\usepackage{color}

\usepackage[]{stackengine}

\newlength\correct
\newtheorem{theorem}{Theorem}[section]
\newtheorem{lem}[theorem]{Lemma}
\newtheorem{prop}[theorem]{Proposition}
\newtheorem{cor}[theorem]{Corollary}
\theoremstyle{definition}\newtheorem{example}[theorem]{Example}
\theoremstyle{definition}
\theoremstyle{definition}\newtheorem{remark}[theorem]{Remark}
\theoremstyle{definition}\newtheorem{defi}[theorem]{Definition}

\newtheorem{innerCondition}{Condition}
\newenvironment{Condition}[1]
  {\renewcommand\theinnerCondition{#1}\innerCondition}
  {\endinnerCondition}

\newtheorem{innerAssumption}{Assumption}
\newenvironment{Assumption}[1]
  {\renewcommand\theinnerAssumption{#1}\innerAssumption}
  {\endinnerAssumption}

\numberwithin{equation}{section}

\newcommand{\R}{\mathbb{R}}

\newcommand*\diff{\mathop{}\!\mathrm{d}}								
\newcommand{\E}{\mathbb{E}}											
\newcommand{\Norm}{\mathcal{N}}									         
\newcommand{\Pro}{\mathbb{P}}										
\newcommand{\indep}{\perp \!\!\! \perp}									
\newcommand{\oland}{\mbox{\scriptsize $\mathcal{O}$}}						
\newcommand{\Oland}{\mathcal{O}}										
\newcommand{\ind}{\operatorname{\mathbf{1}}}							
\renewcommand{\max}[1]{\underset{#1}{\operatorname{max}}\;}				
\renewcommand{\sup}{\operatorname{sup}}					
\renewcommand{\|}{\; : \, \;}											
\newcommand{\conv}[2]{\; \xrightarrow[ {#1}]{#2} \; }						
    
\newcommand{\littletaller}{\mathchoice{\vphantom{\big|}}{}{}{}}
\newcommand\restr[2]{{
  \left.\kern-\nulldelimiterspace 
  #1 
  \littletaller 
  \right|_{#2} 
  }}

\makeatletter
    \def\@fnsymbol#1{\ensuremath{\ifcase#1\or \ddagger\or  \mathsection \or \dagger \or \mathparagraph\or \|\or **\or \dagger\dagger
        \or \ddagger\ddagger \else\@ctrerr\fi}}
\makeatother

\newcommand*\samethanks[1][\value{footnote}]{\footnotemark[#1]}

\title{A functional central limit theorem for particle systems under polynomial growth conditions, with applications to statistics}
\author{}
\date{}

\author{Mark Podolskij\thanks{Department of Mathematics, University of Luxembourg} \, and Fabrice Wunderlich\samethanks \,
\thanks{Department of Statistics, London School of Economics}}

\begin{document}
\maketitle

\begin{abstract}
We establish a functional central limit theorem (CLT) for fluctuations of time-integrated functionals of empirical measures arising from interacting McKean--Vlasov particle systems. Our framework allows for coefficients with unbounded spatial growth and dependence on arbitrarily high-order moments of the measure variable, extending the scope of the existing fluctuation theory. The proof uses a novel approach combining linear functional differentiation with new well-posedness results and estimates for perturbed backward parabolic equations in polynomially weighted Schauder spaces, yielding a martingale-type representation of the fluctuations with an asymptotically negligible remainder. The limiting process is shown to be centred Gaussian with an explicit covariance structure. As a statistical application, we derive the asymptotic theory underlying hypothesis tests for both the temporal constancy of population characteristics and parametric restrictions on the drift coefficient.
\end{abstract}

\section{Introduction}
\label{sec:introduction}

Interacting particle systems of McKean--Vlasov type provide a probabilistic approximation of nonlinear diffusion equations whose coefficients depend not only on the current state of the process but also on its law (see, for instance, the classical references \cite{McKean,Sznitman,Meleard}). Throughout this paper, we consider a system of $N$ weakly interacting particles
\begin{align}
\begin{cases}
    \diff X_t^{i,N}
    =
    b\big(X_t^{i,N},\mu_t^N\big)\diff t
    +
    a\big(X_t^{i,N},\mu_t^N\big)\diff W_t^i,
    \qquad i=1,\ldots,N, \\[0.15cm]
    \mathcal L(X_0^{1,N},\ldots,X_0^{N,N})=\mu_0^{\otimes N},
\end{cases}
\tag{IPS}\label{eq:IPS}
\end{align}
defined on a filtered probability space
$(\Omega,\mathcal F,(\mathcal F_t)_{t\in [0,T]},\Pro)$ satisfying the usual conditions. Here,
$W^1,\ldots,W^N$ are independent one-dimensional Brownian motions and
$$
\mu_t^N:=\frac1N\sum_{i=1}^N\delta_{X_t^{i,N}}
$$
denotes the empirical measure of the particle system at time $t$. Under standard assumptions, such as suitable Lipschitz continuity of the coefficients $b$ and $a$ alongside sufficient moment bounds, the particle system \eqref{eq:IPS} exhibits a mean-field limit described by the McKean--Vlasov equation
\begin{align}
\begin{cases}
\diff X_t
=
b(X_t,\mu_t)\diff t
+
a(X_t,\mu_t)\diff W_t,
\quad \mu_t=\mathcal L(X_t), \\[0.15cm]
\mu_{t=0}=\mu_0.
\end{cases}
\tag{MKV}\label{eq:MKV}
\end{align}
In this paper, we investigate the fluctuation behaviour of integrated empirical functionals of the form
\begin{align}
t \mapsto \int_0^t \int_{\R} \phi(s,x) \mu_s^N(\diff x)
\diff s
\label{eq:empirical_functional}
\end{align}
on the space of continuous paths over a compact time interval, around their McKean--Vlasov counterparts
\begin{align}
t \mapsto \int_0^t \int_{\R} \phi(s,x) \mu_s(\diff x)
\diff s ,
\label{eq:McKean_Vlasov_functional}
\end{align}
where $\phi$ belongs to a large class of possibly unbounded functions specified below. A distinctive feature of our framework is that the coefficients $a$ and $b$ are merely required to be Lipschitz continuous with respect to the Wasserstein-$p$ distance in the measure variable, for an arbitrary $p \ge 2$. This allows for coefficients that grow linearly in the spatial variable and depend on moments of the measure of arbitrarily high order, a setting that, as discussed below, lies beyond the scope of the existing literature.

A natural motivation for allowing a general dependence on the empirical measure comes from mean-field models in which agents respond to aggregate characteristics of the population, as in opinion dynamics, consensus formation, flocking, and related multi-agent systems (see, e.g., \cite{HegselmannKrause2002,CuckerSmale2007,MotschTadmor2014,PareschiToscani2013}). Classical models of this type often depend on simple (weighted) averages or other first-order population statistics. However, first-order summaries do not capture features such as dispersion, asymmetry, concentration, or the prevalence of extreme population states, which are all naturally reflected in higher-order characteristics of the empirical distribution.

At the law-of-large-numbers level, convergence of the integrated empirical functionals follows from standard propagation-of-chaos arguments (see \cite{Sznitman,Meleard} for the classical $p=2$ setting and Section \ref{sec:appendix_propagation_of_chaos_Wp} in the \hyperref[appn]{Appendix} for the extension to $p>2$). More precisely, by coupling the interacting particle system with an independent McKean--Vlasov shadow system through synchronous coupling and combining this with the classical Wasserstein estimates of \cite[Thm.~1]{FournierGuillin}, one obtains an $N^{-1/2}$-rate for the expected supremum over compact time intervals of the $p$-th power of the Wasserstein-$p$ distance between $\mu_t^N$ and $\mu_t$.

Obtaining fluctuation results at the central-limit scale is substantially more delicate. Pure Wasserstein estimates derived from propagation of chaos are generally insufficient for this purpose. Although sharper Wasserstein bounds have been obtained under additional structural assumptions (see, e.g., \cite{AmorinoEtAl}), these approaches rely on strong confinement or convexity conditions. More fundamentally, refined asymptotic results for empirical Wasserstein distances, even in the i.i.d. setting, require tail-sensitive regularity assumptions on the limiting distribution, such as the finiteness of a certain quantile functional \cite{delBarrioGineMatran1999}. Such conditions are not implied by the standard assumptions for McKean--Vlasov systems and may fail even for elliptic diffusions with Gaussian-type tails.

Nevertheless, there exists a rich literature on fluctuation CLTs for McKean--Vlasov systems. Classical approaches investigate the empirical fluctuation field
$$
t \, \mapsto \, \sqrt N(\mu_t^N-\mu_t)
$$
as a process taking values in distribution spaces or negative Sobolev spaces, thereby also covering, in principle, the integrated functionals \eqref{eq:empirical_functional}--\eqref{eq:McKean_Vlasov_functional}. The coefficient classes covered by these results, however, are substantially more restrictive than the one considered here. Indeed, foundational contributions by Hitsuda--Mitoma \cite{HitsudaMitoma1986} and Fernandez--Méléard \cite{FernandezMeleard1997} establish convergence of such fluctuation fields for interacting diffusions with convolution-type coefficients of the form
$\tilde b(x,\nu)=\int_\R b(x,y)\nu(\diff y)$.
While these results are highly influential, they require, among other assumptions, bounded partial derivatives of the coefficient kernel up to order three. Consequently, they do not cover coefficients with unbounded spatial growth or high-order polynomial dependence on the measure.

More recently, Jourdain--Tse \cite{JourdainTse2021} established CLTs for nonlinear functionals of empirical measures and applied them to mean-field fluctuations of interacting diffusions, while Delarue--Lacker--Ramanan \cite{DelarueLackerRamanan2020} derived a functional CLT through the master-equation approach in mean field games, building on the analytic framework of \cite{CardaliaguetDelarueLasryLions2019}. Despite their different methodologies, these works continue to rely on strong boundedness assumptions both in the spatial variable and for derivatives with respect to the measure variable. As in the classical theory, these conditions are not suited to the high-order polynomial moment dependence allowed in the present framework.

\subsection{{Main contribution}}
The main contribution of the present paper is the development of a fluctuation CLT framework for integrated functionals of interacting McKean--Vlasov particle systems whose coefficients may be unbounded in the spatial variable and may depend on high generalised moments of the measure variable. To the best of our knowledge, no existing work in the literature covers this level of generality. Consequently, our setting falls outside the scope of the classical fluctuation CLT theory for McKean--Vlasov particle systems.

The proof of the CLT relies on two main ingredients. First, we exploit the concept of linear functional differentiability of the coefficients with respect to the measure variable. This framework, which has become standard in recent years
(see, e.g., \cite{JourdainTse2021, DelarueLackerRamanan2020, CardaliaguetDelarueLasryLions2019}), enables us to linearise the dynamics with respect to the measure argument. We refer to Section~\ref{subsec:linear_functional_derivatives} for a detailed discussion. Second, we develop a novel approach based on backward parabolic partial differential equations (PDEs), which allows us to propagate polynomial growth of the coefficients to the corresponding Schauder solutions of extended backward equations. Since the fluctuation analysis cannot be carried out within the classical topology of bounded test functions, we instead work with polynomially weighted test-function spaces and establish suitable estimates for the associated McKean--Vlasov backward equations. These estimates ultimately yield a martingale-type representation for the fluctuation of the integrated functional
\begin{align}
t \; \mapsto \;  \sqrt N\int_0^t \int_{\R}
 \phi(s,x)(\mu_s^N-\mu_s)(\diff x)
\diff s \label{eq:fluctuation_functional_target}
\end{align}
up to a remainder that is negligible at the central-limit scale. Combining this decomposition with a martingale CLT and a functional tightness argument, we identify the limiting process as a Gaussian process with an explicit covariance structure.

Our probabilistic results are strongly motivated by statistical applications. To illustrate different aspects of the fluctuation theory developed in this paper, we consider two testing problems. The first concerns the temporal constancy of macroscopic population characteristics of the form
\[
{\bf m}_t = \left( \int_{\R} \varphi_1(x)\,
\mu_t(\diff x),\ldots,\int_{\R} \varphi_q(x)\,
\mu_t(\diff x) \right), \qquad t\in [0,T],
\]
for given functions $\varphi_1,\ldots,\varphi_q$. Such quantities arise naturally when the coefficients depend on the law only through finitely many moments or other aggregate characteristics, in which case they encode the mean-field feedback acting on each particle.
Second, we consider parametric hypothesis testing for the drift. More precisely, for given functions
$b_1,\ldots,b_d:\R\times\mathcal P_p(\R)\to\R$, we introduce the finite-dimensional model
$$V := \left\{b_\theta=\sum_{\ell=1}^d\theta_\ell b_\ell:\theta=(\theta_1,\ldots,\theta_d)^\top\in\R^d \right\},$$
and we test, under the assumption that \(b=b_{\theta^*}\) for some unknown \(\theta^*\in\R^d\), the linear restrictions 
$$ \Xi\theta^*=\xi.$$
A precise formulation of both testing problems is provided in Section~\ref{sec:Stat}. In each case, the asymptotic distribution of the corresponding test statistic is obtained from the fluctuation results developed in this paper.

\subsection{Structure of paper}
Section \ref{sec:setup} presents the main setup of the paper, the key assumptions, and the definition of the linear functional derivative with respect to the measure component. Section \ref{subsec:technicalities} is devoted to certain parabolic backward PDEs and polynomial estimates for their solutions. Section \ref{sec:MainStat} presents the main results of the paper, in particular establishing functional CLTs for fluctuations of the form \eqref{eq:fluctuation_functional_target} using the PDE tools established in Section \ref{subsec:technicalities}. Statistical applications to testing for the drift component are discussed in Section \ref{sec:Stat}. Proofs of the main results are collected in section \ref{sec:Proofs}.

\subsection{Notation}
\label{subsec:notation}

For two sequences $(a_N)_{N\ge 1}$ and $(b_N)_{N\ge 1}$, we write $a_N\lesssim b_N$ if $a_N\leq Cb_N$ for some $C>0$.

Let $U\subseteq[0,T]\times\R$ be an open set. For integers
$\gamma,\lambda\ge0$, let $\mathscr C^{\gamma,\lambda}(U)$ denote the
space of functions $f:[0,T]\times\R\to\R$ that are $\gamma$ times
continuously differentiable with respect to the time variable and
$\lambda$ times continuously differentiable with respect to the spatial
variable on $U$. Further, if all derivatives are bounded on $U$, we will denote this space by $\mathscr C_b^{\gamma,\lambda}(U)$ and endow it with the norm
\[
 \norm{f}_{\mathscr{C}^{\gamma,\lambda}_b(U)} \; := \; \sum_{0\le \beta\le \gamma } \operatorname{sup}_{(t,x) \in U} |\partial_t^{\beta} f(t,x)| \; + \; \sum_{0\le \beta\le \lambda } \operatorname{sup}_{(t,x) \in U} |\partial_x^{\beta} f(t,x)|.
\]
Fix $\alpha\in(0,1)$ throughout, and let
$\mathscr H^{\gamma+\alpha/2,\lambda+\alpha}(U)$ denote the anisotropic Hölder space  consisting of all
$f\in\mathscr C_b^{\gamma,\lambda}(U)$ satisfying
$$ 
\norm{f}_{\mathscr{H}^{\gamma+\alpha/2,\lambda +\alpha}(U)} \; := \; \norm{f}_{\mathscr{C}^{\gamma,\lambda}_b(U)} \; + \; [\partial^{\gamma}_t f]_{\alpha} \; + \; [\partial^{\lambda}_x f]_{\alpha} \; < \; \infty,
$$
where
$$ 
[f]_{\alpha} \; := \; \operatorname{sup}_{\stackrel{(t,x)\neq (s,y)}{(t,x),(s,y) \in U}} \frac{|f(t,x)-f(s,y)|}{(|t-s|^{1/2} + |x-y|)^\alpha} .
$$
In particular, no mixed derivatives are included. 
If $f$ depends only on the spatial variable, we simply write
$f\in\mathscr C_b^\lambda(U)$ and
$f\in\mathscr H^{\lambda+\alpha}(U)$, respectively. 

For $r\ge 1$, we use the notation  
$$P(r)(x):= (1+|x|^2)^{r/2}.$$ 
We further introduce  the polynomially weighted Banach spaces 
$$ \mathscr{C}^{\gamma,\lambda}_{P(r)} \; := \; \mathscr{C}^{\gamma,\lambda}_{P(r)}([0,T]\times \R) \; := \; \big\{ f \; : \; fP(r)^{-1} \in \mathscr{C}^{\gamma,\lambda}_{b}([0,T]\times \R)  \big \}$$ 
and 
$$ \mathscr{H}^{\gamma+\alpha/2,\lambda +\alpha}_{P(r)} \; := \; \mathscr{H}^{\gamma+\alpha/2,\lambda +\alpha}_{P(r)}([0,T]\times \R) \; := \; \big\{ f \; : \; fP(r)^{-1} \in \mathscr{H}^{\gamma+\alpha/2,\lambda +\alpha}([0,T]\times \R)  \big \}.$$
as well as
$$ \widetilde{\mathscr{H}}^{1,2+\alpha}_{P(r)} \; := \; \Big\{ f \in \mathscr{C}^{1,2} \; : \; \norm{f}_{\widetilde{\mathscr{H}}^{1,2+\alpha}_{P(r)}} \; := \; \operatorname{sup}_{0\le t \le T} \norm{f(t,\cdot)}_{\mathscr{H}^{2+\alpha}_{P(r)}} \, + \, \operatorname{sup}_{\substack{0\le t\le T\\ x \in \R}} \Big| \frac{\partial_t f(t,x)}{P(r+2)(x)}\Big|\; < \; \infty \Big\},$$
and, for $I:=[a,b]$, $0\le a<b\le T$,
$$\widetilde{\mathscr{H}}^{1,2+\alpha}_{P(r)}(I\times \R):= \;  \Big\{ f|_{I\times \R}: \, f \in \widetilde{\mathscr{H}}^{1,2+\alpha}_{P(r)}\Big\}.$$

Finally, for a probability measure $\nu$ on $\R$ and a function
$h\in L^1(\R,\nu)$, we use the pairing notation
\[
\langle h,\nu\rangle
:=
\int_{\R} h(x)\,\nu(\diff x).
\]
For $q\geq 1$, we denote by
\[
\mathcal{P}_q(\R)
:=
\left\{
\nu\in\mathcal{P}(\R):
\int_{\R}|x|^q\,\nu(\diff x)<\infty
\right\}
\]
the space of Borel probability measures on $\R$ with finite $q$-th moment; the $q$-th absolute moment of $\nu$ is denoted by $\mathcal{M}_q(\nu)$. This space is equipped with the Wasserstein-$q$ distance
$$ 
\mathcal{W}_q(\mu,\nu) := \Big( \operatorname{inf}_{\pi\in\Gamma(\mu,\nu)} \int_{\R^2}|x-y|^q\,\pi(\diff x,\diff y) \Big)^{1/q} , 
$$
where $\Gamma(\mu,\nu)$ denotes the collection of all couplings of $\mu$ and $\nu$, i.e. all Borel probability measures on $\R^2$ with marginals $\mu$ and $\nu$. 

Lastly, for a signed measure $\nu$, we write $|\nu|=\nu^{+}+\nu^{-}$ to denote the total variation measure of $\nu$.



\section{General setting and linear functional derivatives}

\subsection{Core setup}
\label{sec:setup}

In this section, we state the core assumptions on the drift and diffusion coefficients, which are maintained across all subsequent sections without explicit mention.
We fix $p\ge 2$.

\begin{Assumption}{M1} \label{ass:Main}
The coefficients
$a,b:\R\times\mathcal P_p(\R)\to\R$
are Borel measurable, and the diffusion coefficient is uniformly elliptic, that is,
$$
a^2(x,\nu)\ge \lambda>0,
\qquad (x,\nu)\in\R\times\mathcal P_p(\R),
$$
for some constant $\lambda>0$.
Furthermore, for all $(x,\nu),(\tilde x,\tilde\nu)\in\R\times\mathcal P_p(\R)$,
\begin{align}
 |b(x,\nu)-b(\tilde x,\tilde\nu)|
 +
 |a(x,\nu)-a(\tilde x,\tilde\nu)|
 \le
 C  \big(|x-\tilde x|+\mathcal{W}_p(\nu,\tilde\nu)\big).
\label{eq:Lipschitz_continuity_of_coefficients}
\end{align}
\end{Assumption}

The Lipschitz continuity condition \eqref{eq:Lipschitz_continuity_of_coefficients} immediately implies the linear growth estimate
\begin{align}
|b(x,\mu)|+|a(x,\mu)|
\; \leq \; C \bigl(1+|x|+\mathcal{M}_p(\mu)^{1/p}\bigr), \qquad x \in \R, \, \mu \in \mathcal{P}_p(\R),
\label{eq:linear_growth}
\end{align}
for some constant $C>0$.

The particle system \eqref{eq:IPS} satisfies a propagation-of-chaos property that is sufficiently strong to imply convergence of the empirical measures towards the McKean--Vlasov law in the Wasserstein-$p$ distance, uniformly over compact time intervals. Such results are well established under global Lipschitz assumptions in the classical Wasserstein-$2$ setting (see, e.g., \cite{Sznitman,Meleard}). Since our assumptions are formulated in terms of the Wasserstein-$p$ distance, we include, for completeness, a self-contained proof of the corresponding result for $p\ge2$ in Proposition~\ref{prop:propagation_chaos_Wp} of Section~\ref{sec:appendix_propagation_of_chaos_Wp} in the \hyperref[appn]{Appendix}.

\begin{Assumption}{M2} 
The initial law satisfies the moment bound $\mathcal{M}_{2p+3}(\mu_0)<\infty$. 
\end{Assumption}
Under this moment bound, the solution of the McKean--Vlasov SDE \eqref{eq:MKV} and the interacting particle system \eqref{eq:IPS} propagate this moment uniformly over compact time intervals and in $N\ge1$ (see Proposition~\ref{prop:Wp_well_posedness_moment_propagation} and Remark~\ref{rem:moment_propagation}). More precisely,
\begin{align}
    \E \big[ \operatorname{sup}_{0\le t \le T} |X_t|^{2p+3}\big]  \; + \; \operatorname{sup}_{N\ge1} \E\big[\operatorname{sup}_{0\le t\le T}  \, \mathcal{M}_{2p+3}(\mu^N_t) \big] \, <\, \infty. \label{eq:moments_of_mu}
\end{align}
Moreover, the linear growth condition \eqref{eq:linear_growth} yields
$$
\operatorname{sup}_{N\ge1}
\Big(
\E\big[\operatorname{sup}_{0\le t\le T} |b(X_t^{1,N},\mu_t^N)|^{2p+3}\big]
\vee
\E\big[\operatorname{sup}_{0\le t\le T} |a(X_t^{1,N},\mu_t^N)|^{2p+3}\big]
\Big)
<\infty.
$$

We conclude this setup by introducing a last regularity condition on the coefficients, which is crucial for analysing the auxiliary PDEs in Section~\ref{subsec:technicalities}. 

\begin{Assumption}{M3} \label{condition:coefficient_regularity}
    The coefficients $\beta(t,x):=b(x,\mu_t)$, $\sigma(t,x):=a(x,\mu_t)$ satisfy, for all $0\le t \le T$,
    \begin{itemize}
        \item[(i)] $\partial_x \beta, \partial_x\sigma \in \mathscr{C}_b^{0,2}([0,T]\times \R)$, 
        \item[(ii)] for each $0\le k\le 3$ the functions $\partial_x^k \beta, \partial_x^k \sigma^2$ are jointly continuous on $[0,T]\times \R$, and 
        $$\operatorname{sup}_{0\le t\le T} \, \Big( \lVert \beta(t,\cdot)\rVert_{\mathscr{H}^{3+\alpha}([-R,R])} \, + \, \lVert\sigma^2(t,\cdot)\rVert_{\mathscr{H}^{3+\alpha}([-R,R])}\Big) \; < \; \infty, $$
        for each $R>0$.
    \end{itemize}
\end{Assumption}

As previously noted, Assumptions~\ref{ass:Main}--\ref{condition:coefficient_regularity}  hold throughout the remainder of this paper.

\subsection{Linear functional derivatives}
\label{subsec:linear_functional_derivatives}

To identify the limiting law of statistics of the form
\eqref{eq:fluctuation_functional_target}, it is necessary to understand the evolution of the fluctuation field
\[
t\mapsto \eta_t^N:=\sqrt{N}(\mu_t^N-\mu_t).
\]
A standard approach consists of applying Itô's formula separately to
\eqref{eq:empirical_functional} and \eqref{eq:McKean_Vlasov_functional}, and then analysing the difference between the resulting identities. This naturally leads to the problem of estimating terms of the form
$b(x,\mu_t^N)-b(x,\mu_t)$ and
$a(x,\mu_t^N)-a(x,\mu_t)$.

A widely adopted approach to this problem in recent years has been to assume an appropriate notion of differentiability of the coefficients with respect to the measure variable (see, for example,
\cite{JourdainTse2021, DelarueLackerRamanan2020, CardaliaguetDelarueLasryLions2019}). While this assumption is typically not restrictive, it provides a convenient framework for linearising the dynamics. Heuristically, the leading-order contribution of the measure dependence is obtained by performing a first-order expansion around the limiting measure $\mu_t$. More explicitly, one expects
$$
b(x,\mu_t^N)-b(x,\mu_t) \; = \; \big\langle D_\mu b(x,\mu_t),\mu_t^N-\mu_t\big\rangle
\, + \,
\operatorname{Remainder}(b,x,t,N).
$$
After multiplication by $\sqrt{N}$, the remainder term should become asymptotically negligible. Consequently, the nonlinear dependence of the McKean--Vlasov dynamics on the law is transformed, at the fluctuation scale, into a linear response driven by the fluctuation field.

The purpose of this section is to make this linearisation rigorous. We introduce the notion of linear functional derivatives for functions defined on $\mathcal P_p(\mathbb R)$ and formulate the differentiability assumptions imposed on the coefficients. These assumptions will subsequently allow us to control the remainder terms arising from the second-order Taylor expansion.

\begin{defi} \label{defi:functional_derivative}
 A map $\Phi:\mathcal P_p(\R)\to\R$ is said to admit a \emph{linear functional derivative at $\nu$}, or to be \emph{differentiable at $\nu$}, if there exists a measurable map $D_\nu\Phi(\nu):\R\to\R$ such that, for every $\tilde\nu\in\mathcal P_p(\R)$ one has $D_\nu\Phi(\nu)\in L^1\big(|\tilde\nu-\nu|\big)$ and
    \begin{align*}
        \Phi(\nu + \varepsilon (\tilde \nu - \nu)) - \Phi(\nu ) \; = \; \varepsilon \int_{\R} D_\nu \Phi(\nu)(y) \; (\tilde{\nu}-\nu)(\diff y) \, + \, \oland(\varepsilon)
    \end{align*}
    as $\varepsilon \downarrow 0$.
\end{defi}

In particular, if  $\Phi$ is differentiable along the segment $\nu+ \lambda(\tilde \nu - \nu)$, $0\le \lambda \le 1$, then this implies 
    \begin{align*}
        \Phi(\tilde \nu) - \Phi(\nu ) \; = \; \int_0^1 \int_{\R} D_\nu \Phi(\nu +\lambda(\tilde \nu- \nu ))(y) \; (\tilde\nu - \nu)(\diff y) \, \diff \lambda.
    \end{align*}
Moreover, if $\Phi$ is twice differentiable along the segment, then additionally there exists a map $D_{\nu\nu} \Phi(\nu): \R^2 \to \R$ with $D_{\nu\nu} \Phi(\nu) \in L^1(|\tilde \nu-\nu|^{\otimes 2}  )$, such that 
    \begin{align}
        &\Phi(\tilde \nu) - \Phi(\nu ) \, = \,  \notag \\
        &\int_{\R} D_\nu \Phi(\nu)(y) \, (\tilde \nu - \nu)(\diff y)  +  \int_0^1 (1-\lambda) \int_{\R^2} D_{\nu\nu} \Phi( \nu +\lambda(\tilde \nu- \nu ))(y,z)  \, (\tilde{\nu}-\nu)^{\otimes^2}(\diff y,\diff z)  \diff \lambda \label{eq:second_order_Lions_expansion}.
    \end{align}

In order to be able to make use of this concept, we will require the existence of the first and second order linear derivative for our coefficient functions along the segments $\mu_t+\lambda(\mu_t^N-\mu_t)$, $0\le \lambda \le 1$, $0\le t\le T$. The following regularity condition formalises this.
\begin{Condition}{R1} \label{ass:R}
      For every $x\in \R$ and $j=1,...,d$, the mappings $\nu \mapsto b(x,\nu)$, and $\nu \mapsto a(x,\nu)$ are twice differentiable along the random segments $\mu^{N,\lambda}_t:=\mu_t+\lambda(\mu_t^N-\mu_t)$, $0\le \lambda \le 1$, $0\le t\le T$, and their linear functional derivatives are suitably integrable, that is
      \begin{align*} 
      &\int_{\R^2} \, | D_\nu f(x,\mu^{N,\lambda}_t)(y)|^2 \, \mu^{N,\lambda}_t(\diff x) \,  |\mu^N_t-\mu_t|(\diff y) \; < \; \infty\\
      &\int_{\R^3} \, | D_{\nu \nu} f(x,\mu^{N,\lambda}_t)(y,z)|^2 \, \mu^{N,\lambda}_t(\diff x) \, |\mu^N_t-\mu_t|^{\otimes 2}(\diff y, \diff z) \; < \; \infty
      \end{align*}
      almost surely for $f=a$ and $f=b$, $0\le \lambda \le 1$..
\end{Condition}

\begin{remark} \label{rem:remark_to_condition_R1}
    We shall note that Condition \ref{ass:R} implies that the maps $\nu \mapsto \langle f(\cdot ,\nu) g(\cdot, \nu), \nu \rangle$, $f,g \in \{1, b,a\}$, are twice differentiable at all $\mu_t+\lambda(\mu^N_t-\mu_t)$, $0\le \lambda \le 1$, $0\le t\le T$.
\end{remark}

\begin{remark}
Due to the moment bounds \eqref{eq:moments_of_mu}, the integrability requirements in Condition~\ref{ass:R} can be verified directly under suitable polynomial growth conditions on the linear functional derivatives. For instance, it is sufficient to assume that, for $f\in\{a,b\}$,
\[
|D_\nu f(x,\nu)(y)|
\; \lesssim \; 
(1+|x|^{p+1})(1+|y|^{p+1}),
\]
and
\[
|D_{\nu\nu}f(x,\nu)(y,z)|
\; \lesssim \, 
\prod_{r\in\{x,y,z\}}(1+|r|^{p+1}),
\]
uniformly for $\nu=\mu_t+\lambda(\mu_t^N-\mu_t)$, $0\le t\le T$, and $0\le\lambda\le1$. Indeed, after squaring, these bounds involve moments of order at most $2p+2$ in each variable, which are controlled by the moment bound of order $2p+3$ in \eqref{eq:moments_of_mu}.
\end{remark}

\section{Parabolic backward PDEs and polynomial estimates of solutions} \label{subsec:technicalities}

The central fluctuation objects studied in this paper are the integrated observables of the empirical-measure fluctuation field introduced in \eqref{eq:fluctuation_functional_target}, where the class of admissible time-dependent test functions $\phi$ will be specified later on.

The purpose of this section is to develop the analytic framework for obtaining a martingale-type representation of the integrated empirical fluctuations. The underlying idea is as follows. Given a time-dependent test function $\phi$, we solve a backward parabolic equation with source term $\phi$ and subsequently apply Itô's formula to its solution evaluated along the particle trajectories. This yields a representation of \eqref{eq:fluctuation_functional_target} as the sum of the terminal value of an auxiliary martingale and higher-order error terms, with the latter being structurally simpler to control.

Unlike in existing classical settings, the coefficients considered here are allowed to exhibit polynomial growth, as are the admissible test functions. Consequently, the standard global Schauder theory is no longer sufficient. We therefore require new estimates showing that polynomial growth of the terminal condition and source term is propagated to the solution of the backward equation together with its derivatives. Such estimates are needed both for the standard backward parabolic equation \eqref{eq:backward_PDE_standard_global_Schauder}, associated with the McKean--Vlasov generator, and for an extended backward equation \eqref{eq:backward_PDE_with_A_operator_Schauder} introduced later in this section. To this end, we first recall the classical Schauder theory.

Consider the backward PDE
\begin{align}
\partial_t \varphi(t,x) \; + \;\mathcal{V}_{t,\mu_t} \varphi (x) \;-\; f(t,x) \; = \; 0,
\qquad
\varphi(T,x)=g(x),
\label{eq:backward_PDE_standard_global_Schauder}
\end{align}
where
\begin{align}
\mathcal{V}_{t,\nu} \varphi (x)
\; := \;
b(x,\nu) \partial_x \varphi(t,x)
+
\frac{a^2(x,\nu)}{2}
\partial_{xx}\varphi(t,x)
\label{eq:definition_of_operator_mathcal_V}
\end{align}
is the McKean--Vlasov generator.
Recall that $\beta(t,x)=b(x,\mu_t)$ and $\sigma(t,x)=a(x,\mu_t)$.
By the classical Schauder theory (see, e.g., the monograph by Lieberman \cite{Lieberman1996}), since $a^2$ is uniformly elliptic, if
$\beta,\sigma^2,f\in\mathscr{H}^{\alpha/2,\alpha}([0,T]\times\R)$
and
$g\in\mathscr{H}^{2+\alpha}(\R)$,
then \eqref{eq:backward_PDE_standard_global_Schauder} admits a unique global solution
$\varphi\in\mathscr{H}^{1+\alpha/2,2+\alpha}([0,T]\times\R)$.

\begin{remark}
    In fact, if $\beta, \sigma^2, f \in \mathscr{H}^{\alpha/2,\alpha}$ and $g\in \mathscr{H}^{2+\alpha}$ on compact subsets of $[0,T]\times \R$, this is enough for the backward PDE \eqref{eq:backward_PDE_standard_global_Schauder} to have a unique solution $\varphi$ such that $\varphi \in \mathscr{H}^{1+\alpha/2,2+\alpha}$ on compact subsets. 
\end{remark}

\begin{remark} \label{rem:coefficients_are_locally_Hölder}
    Based on Assumption \ref{ass:Main}, the coefficients $\beta$ and $\sigma^2$ are $1/2$-Hölder continuous in time and Lipschitz continuous in space.
\end{remark}

Polynomially weighted Schauder estimates for equations with unbounded
coefficients were obtained by Palczewski
\cite[Thm.~3.6 and Rem.~3.7]{palczewski}.
However, those results do not explicitly state whether the estimate
constant can be chosen uniformly when the backward problem
\eqref{eq:backward_PDE_standard_global_Schauder} is posed on compact
time subintervals of $[0,T]$. This uniformity is required to obtain uniqueness and, based on this, through a fixed point argument, existence of a solution to the extended backward PDE
\eqref{eq:backward_PDE_with_A_operator_Schauder}.
We therefore derive the weighted estimate below from
\cite[Thm.~3.8]{lorenzi} by conjugating with $P(r)$ and establish
the required uniformity by applying that theorem on a fixed time horizon.

\begin{theorem} \label{thm:propagation_of_polynomial_control}
Let $r\ge 1$ and suppose Assumptions \ref{ass:Main}--\ref{condition:coefficient_regularity}. Suppose $g \in \mathscr{H}^{2+\alpha}_{P(r)}$ and that $f \in \mathscr{C}([0,T]\times \R)$ with $\operatorname{sup}_{0\le t\le T}\lVert f(t,\cdot)\rVert_{\mathscr{H}^\alpha_{P(r)}}< \infty$. Then, there exists one and only one solution $\varphi \in \mathscr{C}^{1,2}$ to the backward PDE \eqref{eq:backward_PDE_standard_global_Schauder} for which it holds 
$$ \sup_{\substack{0\le t\le T\\ x\in \R}} \, \frac{|\varphi(t,x)|}{P(r)(x)}\; < \; \infty.$$
Moreover, the solution $\varphi$ satisfies the bound
\begin{align*}
    \operatorname{sup}_{0\le t\le T} \lVert \varphi(t,\cdot) \rVert _{\mathscr H^{2+\alpha}_{P(r)}} \; \le \; C\big(\lVert g\rVert _{\mathscr H^{2+\alpha}_{P(r)}}+ \operatorname{sup}_{0\le t\le T} \lVert f(t,\cdot) \rVert _{\mathscr H^{\alpha}_{P(r)}} \big).
\end{align*}
where the constant $C>0$ does not depend on $g, f$ and $\varphi$. The same constant $C$ applies to backward problems posed on any subinterval $[a,b]\subseteq[0,T]$, with terminal time prescribed at $b$.
\end{theorem}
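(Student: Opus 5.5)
The plan is to conjugate by the weight $P(r)$, so that the problem becomes a backward equation for the bounded unknown $\psi:=\varphi/P(r)$. With $w:=P(r)$, we have $w'/w = rx/(1+x^2)$ and $w''/w = O((1+x^2)^{-1})$, both bounded together with all their derivatives. Substituting $\varphi = w\psi$ into \eqref{eq:backward_PDE_standard_global_Schauder} gives
\begin{align*}
\partial_t\psi + \tfrac{\sigma^2}{2}\partial_{xx}\psi + \Big(\beta + \sigma^2\tfrac{w'}{w}\Big)\partial_x\psi + \Big(\beta\tfrac{w'}{w} + \tfrac{\sigma^2}{2}\tfrac{w''}{w}\Big)\psi - \tfrac{f}{w} = 0,\qquad \psi(T)=g/w.
\end{align*}
The new diffusion coefficient $\sigma^2/2$ is uniformly elliptic. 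By Assumption \ref{ass:Main} and \eqref{eq:linear_growth} it grows at most quadratically, and its derivatives up to order three are bounded thanks to Assumption \ref{condition:coefficient_regularity}. The new drift grows at most linearly, since $\sigma^2 w'/w \lesssim (1+x^2)|x|/(1+x^2)$. The potential term $c(t,x):=\beta w'/w + \tfrac{\sigma^2}{2}w''/w$ is bounded, because $|\beta|\,|x|/(1+x^2)$ and $\sigma^2/(1+x^2)$ are bounded. The data $g/w\in\mathscr H^{2+\alpha}$ and $f/w$ (bounded in $\mathscr H^\alpha$ uniformly in $t$) belong to the unweighted spaces, by the definition of the weighted spaces.

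Next I will verify the hypotheses of \cite[Thm.~3.8]{lorenzi}, which gives existence, uniqueness and sup-in-time $\mathscr H^{2+\alpha}$ bounds for parabolic problems with unbounded coefficients. These hypotheses are local Hölder regularity, which follows from Assumption \ref{condition:coefficient_regularity}(ii) and Remark \ref{rem:coefficients_are_locally_Hölder}, together with a Lyapunov function. For the Lyapunov function I will take $V(x)=1+x^2$. The transformed operator applied to $V$ gives $\sigma^2 + 2x(\beta+\sigma^2 w'/w) + cV \lesssim V$ by linear growth, so $\mathcal A V\le \lambda V$, which is the kind of condition such theorems require. For the derivatives of the coefficients, the key point is that $\partial_x\beta$ and $\partial_x\sigma$ lie in $\mathscr C^{0,2}_b$. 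As a result, $\partial_x(\sigma^2)=2\sigma\partial_x\sigma$ has linear growth, and the drift has bounded derivatives up to order two, modulo terms like $\sigma^2 (w'/w)'$, which are bounded. This is the dissipativity-type condition (a bound on $\partial_x$ of the drift from above, plus growth of the diffusion derivative controlled by the diffusion itself) under which Lorenzi's Schauder estimate holds. Applying the theorem gives a bounded $\psi$ with $\sup_t\|\psi(t)\|_{\mathscr H^{2+\alpha}}\le C(\|g/w\|_{\mathscr H^{2+\alpha}}+\sup_t\|f(t)/w\|_{\mathscr H^\alpha})$, and the desired estimate for $\varphi = w\psi$ follows directly from the definition of the weighted norm. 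Uniqueness in the class $\sup|\varphi|/P(r)<\infty$ is the same as uniqueness of bounded solutions for the $\psi$-problem. This comes either from Lorenzi's theorem or directly from a maximum principle using the Lyapunov function $V^\epsilon$: apply it to $e^{-Kt}(\pm\psi)-\epsilon V$ and let $\epsilon\downarrow0$.

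For uniformity over subintervals $[a,b]$, I will not apply the theorem on $[a,b]$ separately. Instead I will extend the problem to the fixed horizon. The coefficients are autonomous in the sense of being fixed functions of $t$, so I will shift time so that the terminal time sits at $T$ on an interval $[T-(b-a),T]$. I then extend the coefficients and data constantly in $t$ (to $[0,T-(b-a)]$), which preserves all the Hölder and growth bounds with the same constants. Lorenzi's constant depends only on $T$ and these structural bounds, so the same $C$ works. By uniqueness, the restriction of the extended solution is the subinterval solution.

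I expect the main obstacle to be matching Lorenzi's precise hypotheses, in particular the conditions on the growth of $\partial_x\sigma^2$ relative to $\sigma^2$ and on the interplay between drift and diffusion needed for the a priori Schauder estimate. The time-Hölder regularity of the extended coefficients at the gluing point also needs a brief check, since constant extension keeps the coefficients $1/2$-Hölder in $t$.
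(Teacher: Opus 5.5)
Your proposal follows the paper's proof essentially step for step. You conjugate by $P(r)$ to obtain the coefficients $q=\sigma^2/2$, $L=\beta+\sigma^2P'/P$ and $\ell=\beta P'/P+\tfrac{\sigma^2}{2}P''/P$, verify the hypotheses of \cite[Thm.~3.8]{lorenzi} with the Lyapunov function $1+x^2$, get uniformity over $[a,b]$ by embedding the problem in the fixed horizon with coefficients and data frozen at the left endpoint, and take uniqueness from Lorenzi's uniqueness of bounded solutions.

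The only slip is your initial claim that the derivatives of $\sigma^2$ are bounded. They grow linearly, as you note later. The condition actually needed, and used in the paper, is $|\partial_x^k\sigma^2|\lesssim 1+|\sigma|\lesssim\sigma^2$ for $k\le 3$, which holds because $|\sigma|\ge\sqrt{\lambda}$. Also, Assumption \ref{condition:coefficient_regularity} gives bounded derivatives of the transformed drift and potential up to order three, not just two.
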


\begin{remark} \label{rem:equivalent_norm_polynomial_growth}
As emphasised in \cite[Lem.~2.2]{palczewski}, for any $k\ge 0$, the norm
$\lVert h/P(r)\rVert _{\mathscr H^{k+\alpha}}$ is equivalent to
$$\sum_{j=0}^{k}
\Big\lVert \frac{\partial_x^j h}{P(r)}\Big\rVert _\infty
+\Big[\frac{\partial_{x}^kh}{P(r)}\Big]_\alpha.$$
Thus, in the context of Theorem~\ref{thm:propagation_of_polynomial_control}, it is precisely the polynomial growth of order $r$ of $g,f$ that propagates to the derivatives of the solution $\varphi$.
\end{remark}

\begin{cor} \label{cor:solution_polynomial_growth_time_derivative_bound}
    Under the assumptions of Theorem \ref{thm:propagation_of_polynomial_control}, the solution $\varphi$ to \eqref{eq:backward_PDE_standard_global_Schauder} also satisfies
    \begin{align*}
    \operatorname{sup}_{\substack{0\le t\le T\\ x \in \R}} \Big| \frac{\partial_t \varphi(t,x)}{P(r+2)(x)}\Big| \; \le \; C \Big( \lVert g \rVert_{\mathscr{H}^{2+\alpha}_{P(r)}} \, + \; \operatorname{sup}_{0\le t \le T} \ \lVert f(t,\cdot) \rVert_{\mathscr{H}^{\alpha}_{P(r)}}\Big)
\end{align*}
where, again, the constant $C>0$ does not depend on $g, f, \varphi$, and the same inequality with the same constant $C$ applies to the backward problem on $[a,b]$.
\end{cor}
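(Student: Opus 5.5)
The bound follows directly from the PDE itself combined with Theorem~\ref{thm:propagation_of_polynomial_control}. Since $\varphi\in\mathscr C^{1,2}$ solves \eqref{eq:backward_PDE_standard_global_Schauder} pointwise, we can write
\begin{align*}
\partial_t\varphi(t,x) \; = \; f(t,x) \, - \, \beta(t,x)\,\partial_x\varphi(t,x) \, - \, \tfrac12\sigma^2(t,x)\,\partial_{xx}\varphi(t,x).
\end{align*}
We then bound each term on the right after dividing by $P(r+2)(x)$, which reduces everything to weighted sup-norm bounds on $\varphi$ and its spatial derivatives together with growth bounds on the coefficients.

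First, we control the coefficients. By the linear growth estimate \eqref{eq:linear_growth} and the moment bound \eqref{eq:moments_of_mu}, which gives $\sup_{t}\mathcal M_p(\mu_t)<\infty$, we have
\begin{align*}
|\beta(t,x)|\;\lesssim\; 1+|x|\;\lesssim\; P(1)(x), \qquad \sigma^2(t,x)\;\lesssim\; (1+|x|)^2\;\lesssim\; P(2)(x),
\end{align*}
uniformly in $t\in[0,T]$. The implicit constant depends only on the Lipschitz constant $C$ and on $\sup_t\mathcal M_p(\mu_t)$; in particular it does not depend on $f$, $g$ or the subinterval $[a,b]$.

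Second, we control the solution. By Remark~\ref{rem:equivalent_norm_polynomial_growth}, for $j=0,1,2$,
\begin{align*}
\sup_{t}\big\lVert \partial_x^j\varphi(t,\cdot)/P(r)\big\rVert_\infty \;\lesssim\; \sup_t\lVert\varphi(t,\cdot)\rVert_{\mathscr H^{2+\alpha}_{P(r)}}.
\end{align*}
Theorem~\ref{thm:propagation_of_polynomial_control} bounds the right-hand side by $C\big(\lVert g\rVert_{\mathscr H^{2+\alpha}_{P(r)}}+\sup_t\lVert f(t,\cdot)\rVert_{\mathscr H^\alpha_{P(r)}}\big)$. In the same way, $|f(t,x)|\le \sup_t\lVert f(t,\cdot)\rVert_{\mathscr H^\alpha_{P(r)}}\,P(r)(x)$.

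Combining the two steps and using the elementary inequality $P(k)P(r)\lesssim P(r+2)$ for $k\in\{0,1,2\}$ (because $P(r)\le P(r+2)$ and $P(1)P(r)\le P(2)P(r)=P(r+2)$), we obtain
\begin{align*}
\frac{|\partial_t\varphi(t,x)|}{P(r+2)(x)} \;\lesssim\; \sup_t\lVert f(t,\cdot)\rVert_{\mathscr H^\alpha_{P(r)}} + \sup_t\lVert\varphi(t,\cdot)\rVert_{\mathscr H^{2+\alpha}_{P(r)}}.
\end{align*}
Applying Theorem~\ref{thm:propagation_of_polynomial_control} once more to the second term gives the claimed estimate. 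On a subinterval $[a,b]$ the same argument goes through word for word: the coefficient bounds are global in $t$, and Theorem~\ref{thm:propagation_of_polynomial_control} supplies the same constant $C$ there.

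There is no genuine obstacle. The only point needing care is that the constant in the coefficient growth bound must be uniform in $t$, which holds because $\mathcal M_p(\mu_t)$ is bounded on $[0,T]$. The loss of two powers of the weight comes entirely from the quadratic growth of $\sigma^2$, and this is why the exponent $r+2$ appears in the statement.
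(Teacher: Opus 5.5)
Your proposal is correct and follows the paper's own argument. You solve the PDE for $\partial_t\varphi$, then combine the linear growth bound \eqref{eq:linear_growth} on $\beta,\sigma^2$ (uniform in $t$ via the moment bound) with the weighted derivative estimates of Theorem~\ref{thm:propagation_of_polynomial_control}, which is exactly the one-line justification the paper gives, with the details written out.
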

\begin{proof}
    Based on the form of the solution \eqref{eq:backward_PDE_standard_global_Schauder}, this is a straightforward consequence of Theorem \ref{thm:propagation_of_polynomial_control} and \eqref{eq:linear_growth}.
\end{proof}

Recall the McKean-Vlasov generator $\mathcal{V}_{t,\nu}$ defined in \eqref{eq:definition_of_operator_mathcal_V} and set 
\begin{align}
    \mathcal{A}_{t,\mu_t}\varphi (x) := D_\nu \langle \mathcal{V}_{t,\nu}\varphi, \nu\rangle(x)\vert_{\nu=\mu_t}   =  \mathcal{V}_{t,\mu_t}\varphi(x)  +  \mathcal{K}_{t,\mu_t}\varphi(x) \label{eq:defi_operator_A}
\end{align} for any $\varphi \in \mathscr{C}_{P(p)}^{0,2}$, where $p\ge 2$ was fixed before \eqref{eq:moments_of_mu} and where 
\begin{align}
    \mathcal{K}_{t,\mu_t}\varphi(x) \; := \; \int_{\R}  \Big[D_\nu (b(y,\mu_t))(x)\,\partial_y\varphi(t,y)
\, + \,
2^{-1} D_\nu(a(y,\mu_t)^2)(x)\,\partial_{yy}\varphi(t,y) \Big]\; \mu_t(\diff y).
\label{eq:definition_of_operator_mathcal_K}
\end{align}
Indeed, if we assume Condition \ref{ass:R}, then the first and second linear functional derivative of the functional $\nu \mapsto \langle \mathcal{V}_{t,\nu}\varphi, \nu\rangle$ exist at the segment between $\mu_t$ and $\mu_t^N$, due to Remark \ref{rem:remark_to_condition_R1}.

Let us now consider the backward PDE associated with the operator $\mathcal{A}_{s,\mu_s}$
\begin{align} \partial_t \varphi(t,x) \; + \; \mathcal{A}_{t,\mu_t}\varphi(x) \; - \; f(t,x) \; = \; 0, \qquad \varphi(T,x)=0, \label{eq:backward_PDE_with_A_operator_Schauder}
\end{align}
which can equivalently be written as 
\begin{align} \partial_t \varphi(t,x) \; + \; \mathcal{V}_{t,\mu_t}\varphi(x) \; - \; \Big[ f(t,x)\; - \;\mathcal{K}_{t,\mu_t}\varphi(x) \Big] \; = \; 0, \qquad \varphi(T,x)=0.\label{eq:backward_PDE_with_A_operator_rewritten_Schauder}
\end{align}

\begin{Condition}{R2} \label{condition:coefficient_regularity_extended_backward_PDE}
There exists $\varepsilon \in (0,1-\alpha)$ such that
$$ \mathcal{K}^o : \; \widetilde{\mathscr{H}}^{1,2+\alpha}_{P(p)} \to \mathscr{H}^{(\alpha+\varepsilon)/2,\alpha+\varepsilon}_{P(p-1)}, \; \; \mathcal K^o\varphi(t,x):=\mathcal{K}_{t,\mu_t}\varphi(x) $$
is a well-defined bounded linear operator.

\end{Condition}

Condition \ref{condition:coefficient_regularity_extended_backward_PDE} ensures that we can establish uniqueness and existence of a suitable solution to \eqref{eq:backward_PDE_with_A_operator_rewritten_Schauder} through a fixed point argument. Although abstract in formulation, Condition \ref{condition:coefficient_regularity_extended_backward_PDE} is satisfied in a wide range of examples and should \emph{by no means} be considered restrictive. Example \ref{ex:affine_kernel_structure} below provides a first indication of this.

\begin{lem} \label{lem:condition_R2_restriction_to_time_interval}
Assume Condition~\ref{condition:coefficient_regularity_extended_backward_PDE}. For every $I=[a,b]\subseteq[0,T]$, $0\le a < b\le T$, there exists a unique bounded linear operator
\begin{align*}
    \mathcal K^o_I: \; \widetilde{\mathscr H}^{1,2+\alpha}_{P(p)}(I\times\mathbb R)\, \to
\, \mathscr H^{(\alpha+\varepsilon)/2,\alpha+\varepsilon}_{P(p-1)}
(I\times\mathbb R)
\end{align*}
such that $\mathcal K_I^o(\restr{v}{I})=\restr{(\mathcal K^o v)}{I}$\,  for all $v \in  \widetilde{\mathscr H}^{1,2+\alpha}_{P(p)}([0,T]\times\mathbb R)$. Moreover, its operator norm is bounded independently of $I$.
\end{lem}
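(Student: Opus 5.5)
Define $\mathcal K^o_I$ through a bounded extension operator from $I\times\R$ to $[0,T]\times\R$ and then show the choice of extension does not matter.

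\textbf{Extension operator.} For $I=[a,b]$, let $E_I$ send $u\in\widetilde{\mathscr H}^{1,2+\alpha}_{P(p)}(I\times\R)$ to the function on $[0,T]\times\R$ equal to $u$ on $I$, to $u(a,\cdot)$ on $[0,a]$, and to $u(b,\cdot)$ on $[b,T]$. Three points need checking.
\begin{itemize}
\item $\sup_t\|E_Iu(t,\cdot)\|_{\mathscr H^{2+\alpha}_{P(p)}}$ equals the corresponding supremum over $I$, so it is controlled by the norm of $u$.
\item The time derivative of $E_Iu$ is $\partial_t u$ on $I$ and zero outside, so it is bounded by $\sup_I|\partial_t u/P(p+2)|$.
\item $E_Iu$ must lie in $\mathscr C^{1,2}$. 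The constant extension leaves a jump in $\partial_t$ at $t=a$ and $t=b$. To remove it, take the affine extension $u(a,x)+(t-a)\partial_t u(a,x)$ to the left, and similarly to the right, multiplied by a smooth cutoff in time. This keeps the $P(p+2)$-weighted bound on $\partial_t$.
\end{itemize}
This last point is the delicate one. The spatial $\mathscr H^{2+\alpha}_{P(p)}$ bound for the affine term involves $\partial_t u(a,\cdot)$, which is only controlled in sup-norm with weight $P(p+2)$, not in $\mathscr H^{2+\alpha}$. So I would first try to avoid needing $E_Iu\in\mathscr C^{1,2}$ at all.

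The way around it uses the structure of $\mathcal K^o$. By \eqref{eq:definition_of_operator_mathcal_K}, $\mathcal K^o\varphi(t,x)$ depends only on $\varphi(t,\cdot)$ for that same $t$, i.e.\ the operator is local in time. So it is enough to define $\mathcal K^o_I u(t,x):=\mathcal K_{t,\mu_t}u(t,\cdot)(x)$ for $t\in I$.

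\textbf{Bound via the extension.} To bound $\mathcal K^o_I u$, apply $\mathcal K^o$ to a smooth-in-time global extension $v$ with $v|_I=u$ and $\|v\|\le C\|u\|$, where $C$ does not depend on $I$. Then $\mathcal K^o_Iu=(\mathcal K^o v)|_I$, and
\[
\|\mathcal K^o_I u\|_{\mathscr H^{(\alpha+\varepsilon)/2,\alpha+\varepsilon}_{P(p-1)}(I\times\R)}\le \|\mathcal K^o v\|\le \|\mathcal K^o\|\,C\,\|u\|.
\]
Restricting to $I$ can only decrease the Hölder and sup seminorms, so this gives the uniform operator-norm bound.

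\textbf{Independence from the extension.} If one prefers not to invoke time-locality of the formula directly, note that any two extensions agree on $I$. The explicit formula \eqref{eq:definition_of_operator_mathcal_K} is pointwise in $t$, so $\mathcal K^o$ applied to either extension gives the same values on $I$. Hence $\mathcal K^o_I$ is well defined, linear, and satisfies $\mathcal K^o_I(v|_I)=(\mathcal K^o v)|_I$ for every global $v$.

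\textbf{Uniqueness.} This is immediate from the restriction identity, since every element of $\widetilde{\mathscr H}^{1,2+\alpha}_{P(p)}(I\times\R)$ is by definition a restriction $v|_I$ of some global $v$.

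\textbf{Main obstacle.} Given that the space on $I$ is defined as restrictions, one can take the restriction norm (infimum over extensions). With that norm the bound $\|\mathcal K^o_I\|\le\|\mathcal K^o\|$ follows directly, which avoids constructing $E_I$ explicitly. The expected obstacle is only reconciling the restriction norm with the intrinsic norm on $I$, i.e.\ showing that the extension construction above is bounded uniformly in $I$.
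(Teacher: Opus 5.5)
There is a genuine gap, and it sits exactly where you flag the ``main obstacle''. Your observation that $\mathcal K^o$ is local in time is correct, and the paper uses it too. Beyond that, the entire content of the lemma is the existence of an extension map $E_I$ from $\widetilde{\mathscr H}^{1,2+\alpha}_{P(p)}(I\times\R)$ to $\widetilde{\mathscr H}^{1,2+\alpha}_{P(p)}$ with $(E_Iu)|_{I\times\R}=u$ and $\lVert E_Iu\rVert\le C\lVert u\rVert$, where $C$ does not depend on $I$. You never produce one.

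You rightly discard the constant extension, which is not $\mathscr C^1$ in time at $a$ and $b$. You also rightly discard the affine one, since it needs spatial $\mathscr H^{2+\alpha}_{P(p)}$ control of $\partial_t u(a,\cdot)$, which the norm does not provide. But in the step ``Bound via the extension'' you then simply postulate such an extension.

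Switching to the restriction (quotient) norm does not repair this. The norm on the space over $I$ is the intrinsic one, i.e.\ the global norm with $t$ restricted to $I$. That is the norm controlled by Theorem~\ref{thm:propagation_of_polynomial_control} and Corollary~\ref{cor:solution_polynomial_growth_time_derivative_bound}, and the one fed into $\mathcal K^o_I$ in Lemma~\ref{lem:uniqueness_lemma_PDE_proof} and the Schaefer argument. With the quotient norm the lemma is a tautology, and comparing the two norms uniformly in $I$ is precisely the missing extension statement.

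The missing idea is a $\mathscr C^1$ reflection that uses only values of $u$, never $\partial_t u$. It is combined with a compression of time, so that the reflected argument stays inside $I$ however long $[0,a]$ and $[b,T]$ are. With $h=b-a$ and $\gamma_h(s)=hs/(h+2s)$, put $E_Iu(t,x)=2u(a,x)-u(a+\gamma_h(a-t),x)$ for $t<a$, $E_Iu=u$ on $I$, and $E_Iu(t,x)=2u(b,x)-u(b-\gamma_h(t-b),x)$ for $t>b$. The properties you need then follow directly:
\begin{itemize}
\item Since $\gamma_h$ maps $[0,\infty)$ into $[0,h/2)$, all time arguments lie in $I$.
\item Since $\gamma_h(0)=0$ and $\gamma_h'(0)=1$, both the values and the time derivatives match at $a$ and $b$. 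The combination $2u(a,x)-u(a+s,x)$ is essential here: the even reflection $u(a+s,x)$ would flip the sign of $\partial_t u$ at the junction.
\item Since $0<\gamma_h'\le 1$, one has $\lvert\partial_t E_Iu(t,x)\rvert\le\sup_{s\in I}\lvert\partial_t u(s,x)\rvert$, so the $P(p+2)$-weighted bound is inherited.
\item Every time slice of $E_Iu$ combines at most two slices of $u$ with coefficients $2$ and $1$. Hence $\sup_t\lVert E_Iu(t,\cdot)\rVert_{\mathscr H^{2+\alpha}_{P(p)}}\le 3\sup_{t\in I}\lVert u(t,\cdot)\rVert_{\mathscr H^{2+\alpha}_{P(p)}}$.
\end{itemize}
Thus $\lVert E_Iu\rVert\le 3\lVert u\rVert$ uniformly in $I$. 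Setting $\mathcal K^o_Iu:=(\mathcal K^o E_Iu)|_{I\times\R}$, your time-locality and uniqueness arguments go through unchanged and give $\lVert\mathcal K^o_I\rVert\le 3\lVert\mathcal K^o\rVert$, which is the paper's proof.
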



    

We can finally state the existence and uniqueness theorem for solutions of the perturbed parabolic backward PDE \eqref{eq:backward_PDE_with_A_operator_rewritten_Schauder}.

\begin{theorem} \label{thm:propagation_of_polynomial_control_extended_PDE_A}
Assume Condition~\ref{condition:coefficient_regularity_extended_backward_PDE} and Assumptions \ref{ass:Main}--\ref{condition:coefficient_regularity}.
For every interval $I=[a,b]\subseteq[0,T]$, $0\le a<b \le T$, and $f\in\mathscr H^{\alpha/2,\alpha}_{P(p)}(I\times\mathbb R)$, there exists a unique solution $\varphi\in\widetilde{\mathscr H}^{1,2+\alpha}_{P(p)}(I\times\mathbb R)$ to 
$$\partial_t\varphi+\mathcal V_{t,\mu_t}\varphi
+\mathcal K_{t,\mu_t}\varphi-f=0,
\qquad \varphi(b,\cdot)=0, \qquad a\le t \le b.$$
Moreover,
\begin{align}
\lVert \varphi\rVert _{\widetilde{\mathscr H}^{1,2+\alpha}_{P(p)}(I\times\mathbb R)}
\; \le  \; C\lVert f\rVert _{\mathscr H^{\alpha/2,\alpha}_{P(p)}(I\times\mathbb R)},
\label{eq:extended_PDE_uniform_interval_bound}
\end{align}
where $C$ does not depend on $f,\varphi$ and $I$.
\end{theorem}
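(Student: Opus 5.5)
The proof rests on a fixed-point argument that treats $\mathcal K$ as a lower-order perturbation of the $\mathcal V$-equation, using the rewritten form \eqref{eq:backward_PDE_with_A_operator_rewritten_Schauder}. Fix $I=[a,b]$ and $f\in\mathscr H^{\alpha/2,\alpha}_{P(p)}(I\times\R)$. For $\psi\in X_I:=\widetilde{\mathscr H}^{1,2+\alpha}_{P(p)}(I\times\R)$, let $\Gamma\psi$ be the unique solution, given by Theorem~\ref{thm:propagation_of_polynomial_control} on $[a,b]$ with $r=p$ and $g=0$, of
$$\partial_t\varphi+\mathcal V_{t,\mu_t}\varphi=f-\mathcal K^o_I\psi,\qquad \varphi(b,\cdot)=0 .$$

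First I check that $\Gamma$ is well defined. By Lemma~\ref{lem:condition_R2_restriction_to_time_interval}, $\mathcal K_I^o\psi\in\mathscr H^{(\alpha+\varepsilon)/2,\alpha+\varepsilon}_{P(p-1)}\subseteq\mathscr H^{\alpha/2,\alpha}_{P(p)}$, with a norm bound uniform in $I$. The inclusion holds because $P(p-1)\le P(p)$ and the weight quotient $P(p-1)/P(p)$ is smooth with bounded derivatives. Consequently the source term satisfies the hypotheses of Theorem~\ref{thm:propagation_of_polynomial_control}. That theorem and Corollary~\ref{cor:solution_polynomial_growth_time_derivative_bound} then give $\Gamma\psi\in X_I$ with
$$\|\Gamma\psi\|_{X_I}\le C_0\big(\|f\|+\|\mathcal K^o_I\psi\|\big),$$
where $C_0$ is independent of $I$. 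A fixed point of $\Gamma$ is exactly a solution in $X_I$, and conversely. So existence and uniqueness reduce to showing that $\Gamma$ has a unique fixed point.

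The main obstacle is contraction. The map $\Gamma$ is affine, and its linear part $\psi\mapsto\Gamma_0\psi$ (the case $f=0$) is merely bounded, with no small constant. I plan to gain smallness from short time intervals, exploiting the extra regularity $\varepsilon$ and the lower weight $p-1$ given by Condition~\ref{condition:coefficient_regularity_extended_backward_PDE}. Concretely, I would show that for a source $h\in\mathscr H^{(\alpha+\varepsilon)/2,\alpha+\varepsilon}_{P(p-1)}$ on an interval $J$ of length $\delta$ with zero terminal data, the solution satisfies
$$\|\varphi\|_{X_J}\le C\delta^{\kappa}\|h\|$$
for some $\kappa>0$. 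This should follow from an interpolation or parabolic-scaling argument: the Schauder estimate in the $\alpha+\varepsilon$ scale gains a factor $\delta^{\varepsilon/2}$ when measured in the $\alpha$ scale, since $\varphi(b)=0$. The weight gap between $p-1$ and $p$ should absorb the $P(p+2)$ growth of $\partial_t\varphi$ through Corollary~\ref{cor:solution_polynomial_growth_time_derivative_bound} applied with $r=p-1$. If that fails, the fallback is an exponentially time-weighted norm $\sup_t e^{\Lambda(b-t)}\|\cdot\|$, combined with a Duhamel-type bound, to make the operator norm of $\Gamma_0$ less than $1/2$. Either way, I fix a step $\delta$ depending only on the uniform constants. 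This is possible because all constants are independent of $I$, by Theorem~\ref{thm:propagation_of_polynomial_control} and Lemma~\ref{lem:condition_R2_restriction_to_time_interval}.

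With $\delta$ fixed, I apply Banach's fixed point theorem on $[b-\delta,b]$, which gives a unique solution with $\|\varphi\|\le 2C_0\|f\|$. I then iterate backward over $[b-2\delta,b-\delta]$ and so on, taking as terminal data the value just obtained; this requires the version of Theorem~\ref{thm:propagation_of_polynomial_control} with nonzero $g\in\mathscr H^{2+\alpha}_{P(p)}$. Uniqueness on each piece, together with the condition $\varphi(b,\cdot)=0$, gives global uniqueness on $I$ by backward induction. The number of pieces is at most $\lceil T/\delta\rceil$, independent of $I$. Chaining the estimates therefore yields \eqref{eq:extended_PDE_uniform_interval_bound} with a constant $C$ independent of $f$, $\varphi$ and $I$. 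Finally, I check that the glued function lies in $X_I$: $\mathscr C^{1,2}$ regularity across the junction times follows from the PDE, since $\partial_t\varphi$ is determined by the spatial terms, which are continuous.
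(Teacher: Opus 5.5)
Your reduction of existence and uniqueness to a fixed point of $\Gamma$ is sound, and so is the backward chaining over $\lceil T/\delta\rceil$ pieces. The trouble is the step everything rests on, the short-time smallness $\lVert\varphi\rVert_{X_J}\le C\delta^{\kappa}\lVert h\rVert$: it is false, and the contraction fails with it.

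The norm of $X_J=\widetilde{\mathscr H}^{1,2+\alpha}_{P(p)}(J\times\R)$ contains $\sup_{t,x}|\partial_t\varphi(t,x)|/P(p+2)(x)$. Since $\varphi(d,\cdot)\equiv 0$ gives $\mathcal V_{d,\mu_d}\varphi(d,\cdot)=0$, the equation forces $\partial_t\varphi(d,\cdot)=h(d,\cdot)$ exactly, with no factor of $\delta$. For the linear part this gives $\lVert\Gamma_0\psi\rVert_{X_J}\ge\sup_x|\mathcal K^o_J\psi(d,x)|/P(p+2)(x)$. Now test with a time-constant $\psi(t,\cdot)=h$, for which $\lVert\psi\rVert_{X_J}=\lVert h\rVert_{\mathscr H^{2+\alpha}_{P(p)}}$. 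This shows that $\lVert\Gamma_0\rVert_{X_J\to X_J}$ is at least the norm of $h\mapsto\mathcal K_{d,\mu_d}h$ from $\mathscr H^{2+\alpha}_{P(p)}$ to $\mathscr C^0_{P(p+2)}$, independently of $\delta$. Nothing in Condition~\ref{condition:coefficient_regularity_extended_backward_PDE} makes this smaller than $1$, so Banach's theorem is unavailable on every short interval.

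Your exponential-weight fallback has the same defect. The obstruction sits at $t=b$, where the weight equals $1$. Also, for a backward Volterra structure the damping weight would have to be $e^{-\Lambda(b-t)}$, not $e^{\Lambda(b-t)}$. Separately, even the spatial gain $\delta^{\kappa}$ is only sketched. It needs an $\alpha+\varepsilon$ version of Theorem~\ref{thm:propagation_of_polynomial_control} and a weighted interpolation inequality, neither of which you supply.

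A contraction route can be rescued, but only with an ingredient missing from your plan. First, work with the equivalent norm $\sup_t\lVert\psi(t,\cdot)\rVert_{\mathscr H^{2+\alpha}_{P(p)}}+\eta\sup_{t,x}|\partial_t\psi|/P(p+2)$ for small $\eta$. Second, control $\mathcal K^o\psi(t,\cdot)$ by the \emph{spatial} norm $\lVert\psi(t,\cdot)\rVert_{\mathscr H^{2+\alpha}_{P(p)}}$ alone. Condition~\ref{condition:coefficient_regularity_extended_backward_PDE} only bounds $\mathcal K^o\psi$ by the full $X_J$-norm. Dropping the time-derivative part requires the observation that $\mathcal K^o v(t,\cdot)$ depends only on $v(t,\cdot)$, applied to constant-in-time extensions, exactly as in \eqref{eq:uniqueness_proof_1}--\eqref{eq:uniqueness_proof_4}. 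With that in hand, $\partial_t\Gamma_0\psi=-\mathcal K^o\psi-\mathcal V\Gamma_0\psi$ makes the $\eta$-term small.

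The paper does not use a contraction at all. It proves an a priori bound for solutions of the $\lambda$-family, combining short-time $\mathscr C^0$ smallness from the Feynman--Kac representation, the interpolation estimate \eqref{eq:uniqueness_proof_4}, Young's inequality and backward iteration. It then gets existence from Schaefer's theorem via compactness of the embedding $\mathscr H^{(\alpha+\varepsilon)/2,\alpha+\varepsilon}_{P(p-1)}\hookrightarrow\mathscr H^{\alpha/2,\alpha}_{P(p)}$; this is where the weight gap is essential. Uniqueness comes separately from Lemma~\ref{lem:uniqueness_lemma_PDE_proof}.
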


\begin{remark} \label{rem:extension_for_solutions_on_time_subintervals}
    The solution $\varphi$ on $I \times \R$ can be shown to admit an extension
$\widetilde\varphi\in\widetilde{\mathscr{H}}^{1,2+\alpha}_{P(p)}$ (see, e.g., the proof of Lemma \ref{lem:condition_R2_restriction_to_time_interval}) satisfying
$$\widetilde\varphi|_{I\times\mathbb R}=\varphi \qquad \text{ and }
\qquad \lVert\widetilde\varphi\rVert_{\widetilde{\mathscr{H}}^{1,2+\alpha}_{P(p)}} \; \lesssim  \; \lVert\varphi\rVert_{\widetilde{\mathscr H}^{1,2+\alpha}_{P(p)}(I\times\mathbb R)}.
$$
Whenever a global admissible function is required, we use such
an extension. The extension need not satisfy the PDE outside $I \times \R$. An analogous extension also exists for the solutions of Theorem \ref{thm:propagation_of_polynomial_control} on time subintervals.
\end{remark}


\section{Main results}
\label{sec:MainStat}

This section presents the main results of the article, which establish functional CLTs for fluctuations of the form \eqref{eq:fluctuation_functional_target}.

\subsection{Preliminaries}
\label{subsec:assumptions}
We fix $\alpha \in (0,1)$.  For a finite signed measure $\eta$ with $\int |x|^p \diff|\eta|< \infty$, we define the dual norm
\begin{align}
    \norm{\eta}_{\ast}\; := \;  \operatorname{sup}\Big\{
|\langle g,\eta\rangle| : \,  g \in \mathscr{H}_{P(p)}^{2+\alpha}, \; \lVert g \rVert_{\mathscr{H}_{P(p)}^{2+\alpha}}\le 1. \Big\}\label{eq:defi_ast_norm_and_admissible_class}
\end{align} 
In order to ease notation in the sequel, we also set
\begin{align}
   \mathscr{A} \, := \, \widetilde{\mathscr{H}}^{1,2+\alpha}_{P(p)}.\label{defi_general_admissible_class}
\end{align}
As a first step, we provide a criterion for controlling the second-order terms arising from the Taylor expansion in the measure variable. Such remainders are of the form
$$
\big\langle
\Phi(\varphi,t,\mu_t^{\lambda,N}),
(\mu_t^N-\mu_t)^{\otimes 2}
\big\rangle,
\qquad
\mu_t^{\lambda,N}:=\mu_t+\lambda(\mu_t^N-\mu_t),
$$
where $\Phi$ denotes the corresponding second linear functional derivative. We require these terms to be bounded by the sum of a negligible deterministic contribution and a multiple of the squared dual norm of the empirical measure fluctuation. This criterion will play a central role in our subsequent CLT analysis: it is sufficiently weak to be verified in most examples satisfying the fundamental assumptions introduced in the previous sections, while being strong enough to ensure that the second-order Taylor remainders are asymptotically negligible.

Before stating the assumption, for $K>1$ and $N\ge 1$ define 
\begin{align} \tau^K_N \; := \; \operatorname{inf} \Big\{ t>0 \; : \; \operatorname{sup}_{0\le s \le t} \mathcal{M}_{2p+2}(\mu^N_s) > K\Big\}. \label{defi:localising_stopping_times_for_empirical_moments}
\end{align}
and note that
\begin{align} \lim_{K\to \infty} \,  \operatorname{sup}_{N\ge 1} \, \Pro \big( \tau^K_N \le  T \big) \; = \; 0. \label{eq:stopping_time_vanishing_probabilities}
\end{align}
Indeed, this follows immediately from Markov's inequality together with the uniform moment bound \eqref{eq:moments_of_mu}. Consequently, it suffices to control the stopped remainder processes.

\begin{defi}\label{defi:CLT_fluctuation_compatibility}
    We say that an operator $\Phi:\mathscr{A} \times [0,T] \times \mathcal{P}_p(\R) \to (\R^2 \mapsto \R)$ is \emph{CLT fluctuation compatible}, if for every $K>0$ and $R>0$ there exist 
    $c_N:=c_N(K,R),C_N:=C_N(K,R)>0$ with 
    $N^{1/2}c_N \to 0$, and $N^{-1/2} C_N \to 0$,
    such that 
        \begin{align}
      \limsup_{N\to\infty}\;\operatorname{sup}_{\norm{\varphi}_{\mathscr{A}}\le R} \; \operatorname{sup}_{\, \substack{0\le t\le T \\[1pt] 0\le \lambda \le 1}}\,
\frac{\E\Big[ \big|\langle \Phi(\varphi,t,\mu_t^{\lambda,N}) \, ,\,(\mu_t^N-\mu_t)^{\otimes 2}\rangle\big| \ind_{\{t\le \tau^K_N\}} \Big]}
{c_N + C_N \E\Big[\lVert \mu_t^N-\mu_t\rVert_\ast^2 \ind_{\{t\le \tau^K_N\}} \Big]}
\ \le\ 1. \label{eq:centre_bound_CLT_fluctuation_compatibility}
    \end{align}
\end{defi}

In addition to the regularity Conditions \ref{ass:R} and \ref{condition:coefficient_regularity_extended_backward_PDE}, we require the following assumptions to establish a fluctuation CLT for the quantities \eqref{eq:fluctuation_functional_target}. The aforementioned regularity conditions provide the second-order linearisation of the coefficients in the measure variable (Condition \ref{ass:R}) and allow us to transfer polynomial control to the solutions of the extended backwards PDE \eqref{eq:backward_PDE_with_A_operator_Schauder} (Condition \ref{condition:coefficient_regularity_extended_backward_PDE}). More specifically, the assumptions below ensure that the second-order Taylor remainders arising from the linearisation of the integrated fluctuation functionals in the measure variable are asymptotically negligible.

Sufficient error rates in expectation for the empirical measure fluctuations, quantified in the norm \eqref{eq:defi_ast_norm_and_admissible_class} (and hence for the denominator in \eqref{eq:centre_bound_CLT_fluctuation_compatibility}), will be ensured by Assumption \ref{ass:FB}. Recall the definition of $\mathcal{V}_{t,\nu}$ from \eqref{eq:definition_of_operator_mathcal_V}. 

\begin{Assumption}{A1} \label{ass:FB}
     For all $R>0$ and $K>0$ there exists $L:=L(R,K)>0$, such that 
\begin{align*} \E \Big[ \big| &\langle \mathcal{V}_{t,\mu_t^N}\varphi- \mathcal{V}_{t,\mu_t}\varphi ,\mu_t^N\rangle \big|^2 \ind_{\{t\le \tau^K_N\}} \Big] \; \le \; L \E\Big[\lVert \mu^N_t-\mu_t \rVert_{\ast}^2 \ind_{\{t\le \tau^K_N\}} \Big]  .
\end{align*}
     for all $0\le t\le T$, $N\ge 1$, and $\varphi \in \mathscr{A}$ with $\norm{\varphi}_{\mathscr{A}}\le R$.
\end{Assumption}

We emphasise that Assumption \ref{ass:FB} is primarily a condition on the coefficients $b,a$, in view of the uniform control of the moments of $\mu^N$ up to order $p$ and the restriction $\varphi \in \mathscr{A}$.

Assumption \ref{ass:R2} will then allow us to represent the evaluation of suitable functions against the empirical measure fluctuation through an auxiliary martingale-type decomposition with a negligible remainder.

\begin{Assumption}{A2} \label{ass:R2}
    The second-order linear derivative operator $\Phi: (\varphi, t,\nu) \mapsto D_{\nu\nu}\langle \mathcal{V}_{t,\nu}\varphi, \nu\rangle$ is CLT fluctuation compatible.
\end{Assumption}

We verify Conditions~\ref{ass:R} and
\ref{condition:coefficient_regularity_extended_backward_PDE},
together with Assumptions~\ref{ass:FB}--\ref{ass:R2} for the affine drift Example \ref{ex:affine_kernel_structure}.
For simplicity, we consider a constant diffusion coefficient.
Our results also cover more general drift and diffusion coefficients
under the stated assumptions. The detailed verification is deferred to
Section~\ref{subsec:verification_example}.

\begin{example}[Affine structure] \label{ex:affine_kernel_structure}
Let $\varepsilon>0$, let $\phi:\R \to \R$ be twice continuously 
differentiable with bounded first and second derivatives, and let 
$V \in \widetilde{\mathscr{H}}^{2+\alpha}_{P(p-1)}$, $p\ge 2$, 
such that $V^{1/p}$ is globally Lipschitz and  $V\ge \varepsilon$. Consider the coefficients
\[
b(x,\nu)=\beta x+\gamma\, \phi(\langle V, \nu\rangle^{1/p}),
\qquad
a(x,\nu)=\sigma,
\]
for some fixed constants $\beta, \gamma \in \R$, $\sigma>0$, and $\varepsilon >0$. 
For example, one admissible choice could be $V(y)=\varepsilon + y^{2m}$, with $0\le 2m\le p-1$.
\end{example}

\subsection{Fluctuation bound, martingale-type decomposition and the central limit theorem} \label{subsec:fluctuation_bound}

 We begin by establishing a quantitative bound on the empirical measure fluctuations in the dual norm \eqref{eq:defi_ast_norm_and_admissible_class}. This estimate provides the pivotal control needed to show that the second-order Taylor remainders are asymptotically negligible at the central limit scale.

\begin{prop}[Fluctuation bound] \label{lem:fluctuation_bound}
    Under Assumption \ref{ass:FB} it holds that for every $K>0$ there is a constant $C_K>0$ such that 
    $$ \operatorname{sup}_{0\le t \le T} \E\Big[ \lVert \mu^N_t - \mu_t \rVert_\ast^2 \ind_{\{t\le \tau^K_N\}} \Big] \; \le \; \frac{C_K}{N}.$$
\end{prop}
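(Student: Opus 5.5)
Fix $t\in[0,T]$ and a test function $g\in\mathscr H^{2+\alpha}_{P(p)}$ with $\lVert g\rVert_{\mathscr H^{2+\alpha}_{P(p)}}\le1$. The plan is a duality argument built on the backward equation of Theorem~\ref{thm:propagation_of_polynomial_control}, posed on $[0,t]$ with terminal time $t$. Let $\varphi=\varphi^{g,t}$ solve
\[
\partial_s\varphi+\mathcal V_{s,\mu_s}\varphi=0,\qquad \varphi(t,\cdot)=g,\qquad 0\le s\le t .
\]
By Theorem~\ref{thm:propagation_of_polynomial_control}, Corollary~\ref{cor:solution_polynomial_growth_time_derivative_bound} and the extension in Remark~\ref{rem:extension_for_solutions_on_time_subintervals}, $\varphi\in\mathscr A$ with $\lVert\varphi\rVert_{\mathscr A}\le R_0$. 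The constant $R_0$ does not depend on $g$ or $t$, because the constant in Theorem~\ref{thm:propagation_of_polynomial_control} is uniform over subintervals. Since $\mu$ solves the Fokker--Planck equation, $s\mapsto\langle\varphi(s),\mu_s\rangle$ is constant, so $\langle g,\mu_t\rangle=\langle\varphi(0),\mu_0\rangle$. The moment bounds \eqref{eq:moments_of_mu} make this rigorous for functions of growth $P(p+2)$.

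Next, apply Itô's formula to $\varphi(s,X^{i,N}_s)$ and average over $i$:
\[
\langle g,\mu^N_t\rangle=\langle\varphi(0),\mu^N_0\rangle+\int_0^t\langle\partial_s\varphi+\mathcal V_{s,\mu^N_s}\varphi,\mu^N_s\rangle\,ds+M^N_t ,
\]
where $M^N_t=\frac1N\sum_i\int_0^t\partial_x\varphi(s,X^{i,N}_s)a(X^{i,N}_s,\mu^N_s)\,dW^i_s$. Using the PDE to replace $\partial_s\varphi$ by $-\mathcal V_{s,\mu_s}\varphi$ and subtracting the identity for $\mu$, we get
\[
\langle g,\mu^N_t-\mu_t\rangle=\langle\varphi(0),\mu^N_0-\mu_0\rangle+\int_0^t\langle\mathcal V_{s,\mu^N_s}\varphi-\mathcal V_{s,\mu_s}\varphi,\mu^N_s\rangle\,ds+M^N_t .
\]
This identity holds on $\{t\le\tau^K_N\}$. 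In fact we multiply by $\ind_{\{t\le\tau_N^K\}}\le\ind_{\{s\le\tau_N^K\}}$ for $s\le t$, and for the martingale we pass to the stopped martingale $M^N_{t\wedge\tau^K_N}$.

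We then bound the three terms in $L^2$.
\begin{itemize}
\item The initial term has second moment of order $\frac1N\int\varphi(0)^2\,d\mu_0\lesssim N^{-1}\mathcal M_{2p}(\mu_0)$, by independence and since $|\varphi|\lesssim P(p)$.
\item The martingale term, by the Itô isometry, has second moment bounded by $\frac1{N^2}\sum_i\E\int_0^{t\wedge\tau}|\partial_x\varphi|^2a^2\,ds$. Here $|\partial_x\varphi|^2a^2\lesssim P(2p)P(2)(1+\mathcal M_p^{2/p})$, so this expectation is controlled on $\{s\le\tau^K_N\}$ by $K$ and \eqref{eq:moments_of_mu}, giving $C_K/N$.
\item The drift term, by Cauchy--Schwarz in time and Assumption~\ref{ass:FB} with $R=R_0$, is bounded by
\[
TL\int_0^t h(s)\,ds,\qquad h(s):=\E\big[\lVert\mu^N_s-\mu_s\rVert_\ast^2\ind_{\{s\le\tau^K_N\}}\big].
\]
\end{itemize}
All these bounds are uniform in $g$.

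The main obstacle is that the dual norm is a supremum over $g$, and the supremum must be taken inside the expectation. A pointwise bound for each fixed $g$ is not enough. I would try to restructure the argument so that the supremum is controlled pathwise. The initial and martingale terms are linear in $g$. The map $g\mapsto\varphi^{g,t}$ is linear and bounded from $\mathscr H^{2+\alpha}_{P(p)}$ into $\mathscr A$, so both terms are linear functionals of $g$ that can be written through $\varphi$ and $\partial_x\varphi$. The natural route is a compact embedding: the unit ball of $\mathscr H^{2+\alpha}_{P(p)}$ is compact in a weaker weighted space $\mathscr C^{2}_{P(p+\delta)}$. The supremum can then be estimated through a Hilbert-type norm, for example a weighted Sobolev norm $H^{s}_w$ with $s>1/2+2$ into which the unit ball embeds continuously. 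In $H^{-s}_w$, second moments of empirical fluctuations are sums over an orthonormal basis $\{e_k\}$, i.e. $\E\|\cdot\|^2_{H^{-s}_w}=\sum_k\E\langle e_k,\cdot\rangle^2$, and for each basis element the previous single-function bounds apply with constants that are summable in $k$. For the drift term the supremum passes through directly, because Assumption~\ref{ass:FB} is applied after taking the supremum: $|\int_0^t\cdots|\le\sup_{\lVert\varphi\rVert_{\mathscr A}\le R_0}\int_0^t|\cdots|$. I would then reinterpret Assumption~\ref{ass:FB} as holding uniformly over this ball, possibly at the cost of strengthening its reading. Granting this, we obtain
\[
h(t)\le \frac{C_K}{N}+C\int_0^t h(s)\,ds ,
\]
and Grönwall's lemma gives $\sup_{t\le T}h(t)\le C_Ke^{CT}/N$. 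The finiteness of $h$ needed for Grönwall follows from the moment bound $\mathcal M_{2p+2}\le K$ before $\tau^K_N$.
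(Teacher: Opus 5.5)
Your skeleton (backward equation for $\mathcal V_{s,\mu_s}$ on $[0,t]$ with terminal value $g$, the uniform bound $\lVert\varphi\rVert_{\mathscr A}\le R_0$, It\^o's formula, the three-term identity and the fixed-$g$ bounds) is exactly the paper's proof. The paper then passes from the fixed-$g$ inequality to $h(t)=\E[\lVert\mu^N_t-\mu_t\rVert_\ast^2\ind_{\{t\le\tau^K_N\}}]$ in one line, ``by monotone convergence''. That only bounds $\sup_g\E[|\langle g,\mu^N_t-\mu_t\rangle|^2\ind_{\{t\le\tau^K_N\}}]$, not the expectation of the supremum, so you are right that this exchange is the real difficulty. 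Your proposal does not resolve it, however, so there is a genuine gap precisely at the step you flag.

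Two things fail.

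(i) The Hilbert-space device is mis-dimensioned. A continuous embedding of the unit ball of $\mathscr H^{2+\alpha}_{P(p)}$ into a weighted $H^s_w$ forces $s<2+\alpha$, so for $\alpha\le 1/2$ there is no admissible $s>5/2$. On the other hand, applying your single-function bounds to an orthonormal basis $\{e_k\}$ with summable constants requires $\sum_k\lVert e_k\rVert^2_{\mathscr H^{2+\alpha}_{P(p)}}<\infty$, because those bounds go through Theorem~\ref{thm:propagation_of_polynomial_control} and cost $\lVert e_k\rVert_{\mathscr H^{2+\alpha}_{P(p)}}$. This fails for every $s<5/2+\alpha$: $\sum_k|\partial_x^2e_k(x)-\partial_x^2e_k(y)|^2$ is the squared dual norm of $\delta''_x-\delta''_y$, which for $5/2<s<7/2$ behaves like $|x-y|^{2s-5}$. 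The two requirements on $s$ are incompatible.

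For the initial and martingale terms a repair is conceivable. In dimension one, $\lVert\mu^N_0-\mu_0\rVert_\ast\lesssim\int P(p)|F^N_0-F_0|\,\diff x$, with $F^N_0,F_0$ the distribution functions. For the martingale one can ask for summability pointwise, of $\sum_k|\partial_x\varphi^{e_k,t}(s,x)|^2$, via $\partial_x\varphi^{g,t}(s,x)=\E[g'(X^{s,x}_t)\,\partial_xX^{s,x}_t]$; this only needs $s>3/2$, possibly at the price of more moments. None of this is in your sketch.

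(ii) The drift term is not controlled by Assumption~\ref{ass:FB} as stated. That assumption bounds $\E[|\langle\mathcal V_{s,\mu^N_s}\varphi-\mathcal V_{s,\mu_s}\varphi,\mu^N_s\rangle|^2\ind_{\{s\le\tau^K_N\}}]$ for each fixed $\varphi$, and only through $\lVert\varphi\rVert_{\mathscr A}$. So neither route is available:
\begin{itemize}
\item the supremum inside the expectation that you use;
\item a Hilbert--Schmidt sum over $\{e_k\}$, which would need $\sum_k\lVert\varphi^{e_k,t}\rVert^2_{\mathscr A}<\infty$, the same obstruction as in (i).
\end{itemize}
Your ``reinterpretation'' of the assumption is an extra hypothesis, not a proof step. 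It does hold in Example~\ref{ex:affine_kernel_structure}, where the perturbation sees $\mu^N_s-\mu_s$ only through $\langle V,\mu^N_s-\mu_s\rangle$, pathwise and uniformly in $\varphi$. Without it, the Gr\"onwall inequality $h(t)\le C_K/N+C\int_0^t h(s)\,\diff s$ is not established from the stated hypotheses.
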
 

\begin{cor}[Bounding the remainder] \label{cor:bounding_the_remainder}
    Let $\Phi: \mathscr{A} \times [0,T] \times \mathcal{P}_p(\R) \to \R$ be twice differentiable at all $\mu_t+\lambda(\mu^N_t-\mu_t)$, $0\le t\le T$, $0\le \lambda \le 1$, in the sense of Definition \ref{defi:functional_derivative}, let its second-order linear functional derivative be CLT fluctuation compatible as described in Definition \ref{defi:CLT_fluctuation_compatibility}, and suppose Assumption \ref{ass:FB} to hold. Then, we have
    \begin{align}
        \Phi(\varphi, t, \mu^N_t) - \Phi(\varphi,t,\mu_t) \; = \; \int_{\R} D_\nu \Phi(\varphi,t,\mu_t)(y) \, (\mu^N_t - \mu_t)(\diff y)  \, + \, R(\varphi,t,\mu^N_t, \mu_t) \label{eq:Taylor_expansion_measure}
    \end{align}
    for all $N\ge 1$, $0\le t\le T$ and $\varphi \in \mathscr{A}$, such that for each $K>0$ and $R>0$,
    $$ \sqrt{N} \operatorname{sup}_{0\le t\le T} \, \operatorname{sup}_{\substack{\, \norm{\varphi}_{\mathscr{A}}\le R}}  \, \E\Big[|R(\varphi,t,\mu^N_t, \mu_t) | \ind_{\{t\le \tau^K_N\}} \Big] \; \conv{N\to \infty} \; \; 0.$$
\end{cor}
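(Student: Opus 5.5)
The plan is to combine the second-order expansion \eqref{eq:second_order_Lions_expansion} with the compatibility bound \eqref{eq:centre_bound_CLT_fluctuation_compatibility} and the fluctuation bound of Proposition~\ref{lem:fluctuation_bound}. Fix $\varphi\in\mathscr A$ and $t\in[0,T]$. Since $\nu\mapsto\Phi(\varphi,t,\nu)$ is twice differentiable along the segment $\mu_t^{\lambda,N}=\mu_t+\lambda(\mu^N_t-\mu_t)$, $0\le\lambda\le1$, \eqref{eq:second_order_Lions_expansion} applied with $\nu=\mu_t$ and $\tilde\nu=\mu^N_t$ gives \eqref{eq:Taylor_expansion_measure} with the explicit remainder
\begin{align*}
R(\varphi,t,\mu^N_t,\mu_t)\;:=\;\int_0^1(1-\lambda)\,\big\langle D_{\nu\nu}\Phi(\varphi,t,\mu^{\lambda,N}_t),(\mu^N_t-\mu_t)^{\otimes 2}\big\rangle\,\diff\lambda .
\end{align*}
This identity holds pathwise for every $N$, $t$ and $\varphi$, which settles the first claim.

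For the rate, I would take absolute values, multiply by $\ind_{\{t\le\tau^K_N\}}$ and take expectations. Tonelli's theorem lets me exchange $\E$ and $\int_0^1\diff\lambda$; this needs joint measurability of the integrand in $(\omega,\lambda)$, which I would take as part of the standing regularity and note in passing. Bounding $(1-\lambda)\le1$ and taking the supremum over $\lambda$ gives
\begin{align*}
\E\big[|R(\varphi,t,\mu^N_t,\mu_t)|\ind_{\{t\le\tau^K_N\}}\big]\;\le\;\operatorname{sup}_{0\le\lambda\le1}\E\Big[\big|\big\langle D_{\nu\nu}\Phi(\varphi,t,\mu^{\lambda,N}_t),(\mu^N_t-\mu_t)^{\otimes2}\big\rangle\big|\ind_{\{t\le\tau^K_N\}}\Big].
\end{align*}
By CLT fluctuation compatibility, for fixed $K,R$ and any $\delta>0$ there is $N_0$ such that for $N\ge N_0$, uniformly in $\norm{\varphi}_{\mathscr A}\le R$, $t\in[0,T]$ and $\lambda\in[0,1]$, the right-hand side is at most
\begin{align*}
(1+\delta)\Big(c_N+C_N\,\E\big[\lVert\mu^N_t-\mu_t\rVert_\ast^2\ind_{\{t\le\tau^K_N\}}\big]\Big).
\end{align*}
This uses the $\limsup\le1$ in \eqref{eq:centre_bound_CLT_fluctuation_compatibility}, which is already a supremum over $\varphi$, $t$ and $\lambda$. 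If the ratio is read with a degenerate denominator, I would simply replace $c_N$ by $c_N+N^{-1}$, which still satisfies $N^{1/2}c_N\to0$.

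Next, Proposition~\ref{lem:fluctuation_bound}, which holds under Assumption~\ref{ass:FB}, bounds the expectation by $C_K/N$ uniformly in $t$. Multiplying by $\sqrt N$ yields
\begin{align*}
\sqrt N\operatorname{sup}_t\operatorname{sup}_{\varphi}\E\big[|R|\ind_{\{t\le\tau^K_N\}}\big]\;\le\;(1+\delta)\big(N^{1/2}c_N+C_K\,N^{-1/2}C_N\big)\;\longrightarrow\;0 .
\end{align*}

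The only delicate point is uniformity. The compatibility definition gives an asymptotic ($\limsup$) bound on a ratio rather than a bound valid for each $N$. This is handled by the $(1+\delta)$, $N\ge N_0$ argument above, since the supremum is inside the $\limsup$. A second point is that the constant $C_K$ in Proposition~\ref{lem:fluctuation_bound} must not depend on $t$ or $\varphi$, which it does not. Everything else is bookkeeping.
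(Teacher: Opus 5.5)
Your proposal is correct and matches the paper's argument: the paper's proof simply cites the second-order expansion \eqref{eq:second_order_Lions_expansion}, the CLT fluctuation compatibility of $D_{\nu\nu}\Phi$, and the $C_K/N$ bound of Proposition~\ref{lem:fluctuation_bound}. You combine these three ingredients in the same way, and your handling of the $\limsup$ via $(1+\delta)$ for $N\ge N_0$, together with the uniformity in $t$, $\varphi$ and $\lambda$, supplies the details the paper leaves implicit. Your degenerate-denominator precaution is unnecessary, since Definition~\ref{defi:CLT_fluctuation_compatibility} already requires $c_N>0$.
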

\begin{proof}
The corollary follows immediately from \eqref{eq:second_order_Lions_expansion}, Definition \ref{defi:CLT_fluctuation_compatibility}, and Proposition \ref{lem:fluctuation_bound}.
\end{proof}

We next record a martingale-type decomposition result for the fluctuation of the integrated functionals \eqref{eq:fluctuation_functional_target} in Proposition \ref{prop:empirical_measure_decomposition}. While the general decomposition result in Proposition \ref{prop:empirical_measure_decomposition} distinctly extends the existing literature and requires a novel analytical base, the outcome is similar in nature to recent martingale approaches to the fluctuation analysis of interacting particle systems (cf., for instance, \cite{JourdainTse2021} and \cite{CarmonaDelarue2018}).

\begin{prop}[Decomposition of integrated fluctuation] \label{prop:empirical_measure_decomposition}
    Let $\phi \in \mathscr{H}^{\alpha/2,\alpha}_{P(p)}$
    and assume the regularity Conditions \ref{ass:R}, \ref{condition:coefficient_regularity_extended_backward_PDE} and the Assumptions \ref{ass:FB}, \ref{ass:R2}. Then, for every $0< r \le T$ and $N\ge 1$ there is a martingale $M^{N,\phi,r}$ of the form
    $$ M^{N,\phi,r}_t  := \frac{1}{N} \sum_{i=1}^N \int_0^{t\wedge r} a(X^{i,N}_s, \mu^N_s) \; \partial_x \varphi^{\phi,r}(s,X^{i,N}_s) \diff W^i_s$$
    where $\varphi=\varphi^{\phi,r} \in \mathscr{A}$ is a solution to the backward PDE
    \begin{align}
        \partial_t \varphi(t,x) \, + \, \mathcal{A}_{t,\mu_t} \varphi(t,x) \, + \, \phi(t,x) \; = \; 0, \quad \varphi(r,x)= 0, \label{eq:backwards_PDE}
    \end{align}
    for all $t \in[0,r]$, $x \in \R$, and with $\mathcal{A}$ defined in \eqref{eq:defi_operator_A}, such that
    \begin{align}
        \int_0^r \langle \phi(s,\cdot), \sqrt{N}(\mu^N_s - \mu_s) \rangle \diff s \; = \;  
        \sqrt{N} \langle \varphi^{\phi,r}(0,\cdot), \mu^N_0-\mu_0\rangle \; + \; \sqrt{N} M^{N,\phi,r}_r  + \sqrt{N} \int_0^r R_s^{N,\phi,r} \diff s \label{eq:decomposition_additive_fluctuation}
        \end{align}
    for all $N\ge 1$. For each $K>0$, the remainder satisfies
    \begin{align} \sqrt{N} \operatorname{sup}_{0\le t\le r\le T}  \, \E\Big[|R_t^{N, \phi,r} | \ind_{\{t\le \tau^K_N\}} \Big] \; \conv{N\to \infty} \; \; 0. \label{eq:final_decomposition_limit_behaviour_remainder}
    \end{align}
    \end{prop}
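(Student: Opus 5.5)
We will construct $\varphi=\varphi^{\phi,r}$ by applying Theorem~\ref{thm:propagation_of_polynomial_control_extended_PDE_A} on $I=[0,r]$ with source $f=-\phi$. This yields a unique $\varphi\in\widetilde{\mathscr H}^{1,2+\alpha}_{P(p)}([0,r]\times\R)$ solving \eqref{eq:backwards_PDE} with $\varphi(r,\cdot)=0$. Its norm is bounded by $C\|\phi\|_{\mathscr H^{\alpha/2,\alpha}_{P(p)}}$ uniformly in $r$. Via Remark~\ref{rem:extension_for_solutions_on_time_subintervals} we extend it to an element of $\mathscr A$ with $\|\varphi\|_{\mathscr A}\le R$ for some $R$ independent of $r$. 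This uniformity is what later permits the supremum over $r$ in \eqref{eq:final_decomposition_limit_behaviour_remainder}.

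\textbf{Itô step.} Apply Itô's formula to $\varphi(t,X^{i,N}_t)$ and average over $i$. Since $\varphi\in\mathscr C^{1,2}$ with polynomial growth of order $p$ (derivatives included) and $\partial_t\varphi$ has growth $P(p+2)$, the moment bounds \eqref{eq:moments_of_mu} of order $2p+3$ make all terms integrable. The stochastic integral is then a true martingale, namely $M^{N,\phi,r}$. This gives
$$\langle\varphi(r),\mu^N_r\rangle-\langle\varphi(0),\mu^N_0\rangle=\int_0^r\big(\langle\partial_s\varphi(s),\mu^N_s\rangle+\langle\mathcal V_{s,\mu^N_s}\varphi(s),\mu^N_s\rangle\big)\diff s+M^{N,\phi,r}_r.$$
For the McKean--Vlasov law, the Fokker--Planck equation gives the same identity with $\mu$ and without the martingale. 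Subtract the two identities and use $\varphi(r)=0$. Write $\Psi(\nu):=\langle\mathcal V_{s,\nu}\varphi(s),\nu\rangle$ and apply the Taylor expansion of Corollary~\ref{cor:bounding_the_remainder}, using Remark~\ref{rem:remark_to_condition_R1} and Condition~\ref{ass:R} for the differentiability. By \eqref{eq:defi_operator_A},
$$\Psi(\mu^N_s)-\Psi(\mu_s)=\langle\mathcal A_{s,\mu_s}\varphi(s),\mu^N_s-\mu_s\rangle+R^{N,\phi,r}_s.$$
Inserting the PDE $\partial_s\varphi+\mathcal A_{s,\mu_s}\varphi=-\phi$ gives
$$0=\langle\varphi(0),\mu^N_0-\mu_0\rangle\cdot(-1)-\int_0^r\langle\phi(s),\mu^N_s-\mu_s\rangle\diff s+\int_0^rR^{N,\phi,r}_s\diff s+M^{N,\phi,r}_r.$$
Up to the sign conventions in the statement, multiplying by $\sqrt N$ yields \eqref{eq:decomposition_additive_fluctuation}. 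I would fix signs carefully here: the source $+\phi$ in \eqref{eq:backwards_PDE} produces the stated sign.

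\textbf{Remainder.} The remainder $R^{N,\phi,r}_s$ is the second-order term of \eqref{eq:second_order_Lions_expansion} for $\Psi$. Its second derivative is exactly $D_{\nu\nu}\langle\mathcal V_{s,\nu}\varphi,\nu\rangle$, which is CLT fluctuation compatible by Assumption~\ref{ass:R2}. Corollary~\ref{cor:bounding_the_remainder} applies with the fluctuation bound of Proposition~\ref{lem:fluctuation_bound} (this needs Assumption~\ref{ass:FB}). It gives
$$\sqrt N\sup_{s}\sup_{\|\varphi\|_{\mathscr A}\le R}\E\big[|R|\ind_{\{s\le\tau^K_N\}}\big]\to0.$$
Because every $\varphi^{\phi,r}$, $r\in(0,T]$, lies in the same ball of radius $R$, this supremum dominates the one over $0\le t\le r\le T$, which gives \eqref{eq:final_decomposition_limit_behaviour_remainder}.

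\textbf{Main obstacle.} The delicate step is justifying Itô's formula and the Fokker--Planck identity for a test function that is unbounded and only $\mathscr C^{1,2}$ on $[0,r]$. I would first localize with $\tau^K_N$ and spatial cutoffs, then pass to the limit by dominated convergence using the $P(p)$ and $P(p+2)$ growth bounds and moments of order $2p+3$. A second point to handle is that differentiability holds only along the random segments, so the Taylor formula must be applied pathwise. Uniformity of the constant in $I$, from Theorem~\ref{thm:propagation_of_polynomial_control_extended_PDE_A}, is essential for the final supremum.
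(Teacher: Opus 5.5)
Your proposal is correct and follows the paper's proof essentially step by step. You solve \eqref{eq:backwards_PDE} on $[0,r]$ via Theorem~\ref{thm:propagation_of_polynomial_control_extended_PDE_A} with $f=-\phi$ and extend to $\mathscr A$ uniformly in $r$. You then apply Itô's formula to the particles and to the McKean--Vlasov flow, and linearise $\nu\mapsto\langle\mathcal V_{s,\nu}\varphi,\nu\rangle$ through Corollary~\ref{cor:bounding_the_remainder}, whose first derivative at $\mu_s$ is $\mathcal A_{s,\mu_s}\varphi$. Finally, the uniform bound on the $\mathscr A$-norms of $\varphi^{\phi,r}$ gives the supremum over $0\le t\le r\le T$.

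The only slip is the sign of the initial term in your intermediate display. Using $\varphi(r,\cdot)=0$ one gets $0=\langle\varphi(0,\cdot),\mu^N_0-\mu_0\rangle-\int_0^r\langle\phi(s,\cdot),\mu^N_s-\mu_s\rangle\diff s+\int_0^r R^{N,\phi,r}_s\diff s+M^{N,\phi,r}_r$. This rearranges directly to \eqref{eq:decomposition_additive_fluctuation}, with no sign convention left to adjust.
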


\begin{lem}[Quadratic covariation of the martingale parts] \label{lem:quadratic_covariation_M_phi}
 Let $0\le u, r \le T$ and $\phi, \tilde\phi \in \mathscr{H}^{\alpha/2,\alpha}_{P(p)}$. The limiting behaviour of the predictable quadratic covariation of $\sqrt{N} M^{N,\phi,u}$ and $\sqrt{N} M^{N,\tilde \phi,r}$ is given by
    $$ \Big\langle \sqrt{N} M^{N,\phi,u}, \sqrt{N}M^{N, \tilde \phi,r}\Big\rangle_t \, \conv{N\to \infty}{\Pro} \, \E\Big[ \int_0^{t\wedge u\wedge r} a(X_s, \mu_s)^2 \; \partial_x \varphi^{\phi,u}(s,X_s)\; \partial_x \varphi^{\tilde \phi,r}(s,X_s) \diff s \Big] $$
    and we denote the limit by $\Lambda^{\phi,\tilde \phi}_{u,r}(t)$, for each $0\le t\le T.$
\end{lem}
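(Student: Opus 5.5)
We compute the bracket explicitly and then pass to the limit by propagation of chaos. Since $W^1,\dots,W^N$ are independent, the definition of $M^{N,\phi,u}$ in Proposition~\ref{prop:empirical_measure_decomposition} gives
\begin{align*}
\Big\langle \sqrt{N} M^{N,\phi,u}, \sqrt{N}M^{N, \tilde \phi,r}\Big\rangle_t
= \int_0^{t\wedge u\wedge r} \big\langle h_s, \mu^N_s\big\rangle \diff s,
\end{align*}
where we set
\begin{align*}
h_s(x,\nu) := a(x,\nu)^2\, \partial_x\varphi^{\phi,u}(s,x)\,\partial_x\varphi^{\tilde\phi,r}(s,x), \qquad h_s:=h_s(\cdot,\mu^N_s).
\end{align*}
The factor $N$ cancels against the $1/N^2$ prefactor, leaving one average over the $N$ particles. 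The target equals
\begin{align*}
\int_0^{t\wedge u\wedge r}\langle h_s(\cdot,\mu_s),\mu_s\rangle \diff s.
\end{align*}
We split the difference into two terms:
\begin{align*}
\text{(I)} &= \int_0^{t\wedge u\wedge r} \langle h_s(\cdot,\mu_s^N)-h_s(\cdot,\mu_s),\mu^N_s\rangle \diff s,\\
\text{(II)} &= \int_0^{t\wedge u\wedge r} \langle h_s(\cdot,\mu_s),\mu^N_s-\mu_s\rangle \diff s.
\end{align*}
We show that each converges to $0$ in $L^1$, hence in probability. Throughout, $\varphi^{\phi,u},\varphi^{\tilde\phi,r}\in\mathscr A$ by Theorem~\ref{thm:propagation_of_polynomial_control_extended_PDE_A}, extended to $[0,T]$ via Remark~\ref{rem:extension_for_solutions_on_time_subintervals}. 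Consequently $|\partial_x\varphi(s,x)|\lesssim P(p)(x)$ uniformly in $s$, and $\partial_x\varphi(s,\cdot)$ is locally Lipschitz with constant growing like $P(p)$.

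For (I), we use $|a^2(x,\mu^N_s)-a^2(x,\mu_s)|\le C(1+|x|+\mathcal M_p(\mu_s^N)^{1/p}+\mathcal M_p(\mu_s)^{1/p})\,\mathcal W_p(\mu^N_s,\mu_s)$, which follows from \eqref{eq:Lipschitz_continuity_of_coefficients} and \eqref{eq:linear_growth}. Hence
\begin{align*}
|\text{(I)}| \lesssim \int_0^T \mathcal W_p(\mu^N_s,\mu_s)\,\big(1+\mathcal M_p(\mu^N_s)^{1/p}\big)\,\big\langle (1+|x|)P(p)^2,\mu^N_s\big\rangle \diff s.
\end{align*}
The integrand involves moments of order $2p+1$, which are bounded uniformly by \eqref{eq:moments_of_mu}. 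Using Cauchy--Schwarz, or more simply localisation on $\{\tau^K_N>T\}$ and then \eqref{eq:stopping_time_vanishing_probabilities}, together with the propagation-of-chaos rate $\E\sup_s\mathcal W_p^p(\mu^N_s,\mu_s)\to0$ (Proposition~\ref{prop:propagation_chaos_Wp}), we get $\text{(I)}\to0$ in probability.

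For (II), the function $g_s:=h_s(\cdot,\mu_s)$ is deterministic, continuous, and satisfies $|g_s(x)|\lesssim (1+|x|)^{2}P(p)^2(x)$. This is roughly polynomial growth of order $2p+2$, and growth strictly below the available moment order $2p+3$ is all we need. We again use the synchronous coupling with i.i.d.\ McKean--Vlasov copies $\bar X^i$ from the appendix and write
\begin{align*}
\langle g_s,\mu^N_s-\mu_s\rangle = \frac1N\sum_i\big(g_s(X^{i,N}_s)-g_s(\bar X^i_s)\big)+\frac1N\sum_i\big(g_s(\bar X^i_s)-\E g_s(\bar X^i_s)\big).
\end{align*}
The second sum has $L^2$ norm $O(N^{-1/2})$ whenever $g_s$ has finite second moment under $\mu_s$. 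If $2\cdot(2p+2)$ exceeds the available moments, we instead truncate: replace $g_s$ by $g_s\ind_{|x|\le M}$ plus a tail. The tail is uniformly small in $L^1$ because we have moments of order $2p+3>2p+2$, and the truncated part has bounded variance. For the first sum, $g_s$ is locally Lipschitz with polynomially growing constant, since $a^2(\cdot,\mu_s)$ is Lipschitz and $\partial_{xx}\varphi$ grows like $P(p)$. Hence
\begin{align*}
|g_s(x)-g_s(y)|\lesssim |x-y|\,\big(1+|x|+|y|\big)^{2p+1}.
\end{align*}
Hölder's inequality with the coupling estimate $\E|X^{i,N}_s-\bar X^i_s|^2\to0$ then handles this term, again after localisation or truncation to avoid exceeding $2p+3$ moments.

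The main obstacle is this moment bookkeeping. The integrands have growth of order close to $2p+2$, but only $2p+3$ moments are available. So we cannot apply naive Cauchy--Schwarz, and we must rely on truncation at level $M$ together with uniform integrability. Concretely, we first send $N\to\infty$ for fixed $M$, and then send $M\to\infty$, using the uniform bound \eqref{eq:moments_of_mu} to control the tails.
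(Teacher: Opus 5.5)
Your proposal is correct and follows essentially the paper's route. You use the same bracket computation and the same three-way split via the synchronous coupling: the $a^2$ measure-argument error is controlled by the $\mathcal{W}_p$ rate under moment localisation, the coupling error by local Lipschitz bounds with tail truncation, and the i.i.d.\ copies by a law of large numbers; the only differences are cosmetic (a truncated weak LLN and H\"older where the paper uses the SLLN and Kantorovich--Rubinstein on compacts). One small slip: the local Lipschitz constant of $g_s$ grows like $(1+|x|+|y|)^{2p+2}$, not $(1+|x|+|y|)^{2p+1}$, because $a^2\,\partial_{xx}\varphi\,\partial_x\tilde\varphi$ already has growth of order $2p+2$. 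This puts the H\"older step exactly at the $2p+3$ moment boundary, so your planned truncation is genuinely needed there, and it handles it.
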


 We are now in a position to state the main result of this paper, which establishes a functional CLT for the integrated fluctuations of the form \eqref{eq:fluctuation_functional_target}.

\begin{theorem}[Functional CLT for integrated fluctuations] \label{thm:functional_CLT_additive_fluctuations}
    Let $\phi \in \mathscr{H}^{\alpha/2,\alpha}_{P(p)}\cap \mathscr{A}$.
    Suppose the regularity Conditions \ref{ass:R}, \ref{condition:coefficient_regularity_extended_backward_PDE} and the Assumptions \ref{ass:FB}, \ref{ass:R2} are satisfied. Then, there exists a continuous centred Gaussian process $Z$ such that
    $$ \Psi^N(\phi) \; \conv{N \to \infty}{\mathcal{L}} \; Z^\phi$$
    on $(\mathscr{C}([0,T];\R), \, \norm{\cdot}_\infty)$,
    where $$\Psi^N(\phi)_t \; := \;  \int_0^t \langle \phi(s,\cdot), \sqrt{N}(\mu^N_s - \mu_s) \rangle \diff s, \qquad 0\le t \le T,$$
    and which has covariance function $\operatorname{Cov}(Z^\phi_t,Z^\phi_r)= \operatorname{Cov}_{\mu_0}\big(\varphi^{\phi,t}(0,\cdot), \varphi^{\phi,r}(0,\cdot) \big) + \Lambda^{\phi,\phi}_{t,r}(t)$, $0\le t\le r\le T$.
\end{theorem}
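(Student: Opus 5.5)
The proof splits into finite-dimensional convergence, identification of the covariance, and tightness in $\mathscr{C}([0,T];\R)$.

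\emph{Finite-dimensional convergence.} Fix times $0< r_1<\dots<r_m\le T$. For each $r_j$, Proposition~\ref{prop:empirical_measure_decomposition} gives
\[
\Psi^N(\phi)_{r_j}=\sqrt N\langle\varphi^{\phi,r_j}(0,\cdot),\mu_0^N-\mu_0\rangle+\sqrt N M^{N,\phi,r_j}_{r_j}+\sqrt N\int_0^{r_j}R^{N,\phi,r_j}_s\diff s .
\]
The remainder term is handled first. By \eqref{eq:final_decomposition_limit_behaviour_remainder}, its expectation on $\{T\le\tau^K_N\}$ vanishes. Together with \eqref{eq:stopping_time_vanishing_probabilities} (first $N\to\infty$, then $K\to\infty$), this shows the remainder tends to $0$ in probability.

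The initial term is $N^{-1/2}\sum_i(\varphi^{\phi,r_j}(0,X_0^{i,N})-\langle\varphi^{\phi,r_j}(0,\cdot),\mu_0\rangle)$, a normalised sum of i.i.d.\ centred variables. Since $|\varphi(0,x)|\lesssim P(p)(x)$ by Theorem~\ref{thm:propagation_of_polynomial_control_extended_PDE_A} and $\mathcal M_{2p}(\mu_0)<\infty$ by M2, these variables are square integrable.

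The initial term is $\mathcal F_0$-measurable, while the martingales start at zero. I therefore apply a martingale CLT, in the form of Rebolledo / Jacod--Shiryaev for the vector of continuous martingales $(\sqrt N M^{N,\phi,r_j})_j$, conditionally on $\mathcal F_0$. Lemma~\ref{lem:quadratic_covariation_M_phi} supplies the deterministic limit $\Lambda$ of the predictable brackets, and continuity means no jump (Lindeberg) condition is needed. Combining the two pieces, for instance via characteristic functions $\E[e^{iu\cdot A_N}\E[e^{iv\cdot M_N}\mid\mathcal F_0]]$ with stable convergence, shows that the initial term and the martingale part converge jointly to independent Gaussians. This yields the covariance
\[
\operatorname{Cov}_{\mu_0}\big(\varphi^{\phi,t}(0,\cdot),\varphi^{\phi,r}(0,\cdot)\big)+\Lambda^{\phi,\phi}_{t,r}(t\wedge r).
\]
The cross-covariance $\Lambda^{\phi,\phi}_{t,r}(T)=\Lambda^{\phi,\phi}_{t,r}(t\wedge r)$ arises because $M^{N,\phi,t}$ is stopped at $t$.

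\emph{Tightness.} Since $\Psi^N(\phi)_0=0$, it suffices to prove a Kolmogorov-type bound such as $\E[|\Psi^N_t-\Psi^N_s|^2\ind]\lesssim|t-s|^{1+\delta}$, or Aldous' criterion. The direct bound
\[
|\Psi^N_t-\Psi^N_s|\le\int_s^t|\langle\phi(u),\sqrt N(\mu^N_u-\mu_u)\rangle|\diff u
\]
is the natural route. Because $\phi\in\mathscr A$, we have $\phi(u,\cdot)\in\mathscr H^{2+\alpha}_{P(p)}$ uniformly in $u$, so
\[
|\langle\phi(u),\sqrt N(\mu^N_u-\mu_u)\rangle|\le\|\phi\|_{\mathscr A}\sqrt N\|\mu^N_u-\mu_u\|_\ast .
\]
By Cauchy--Schwarz and Proposition~\ref{lem:fluctuation_bound},
\[
\E\big[|\Psi^N_t-\Psi^N_s|^2\ind_{\{T\le\tau^K_N\}}\big]\le|t-s|\int_s^t N\,\E\big[\|\mu^N_u-\mu_u\|_\ast^2\ind_{\{u\le \tau^K_N\}}\big]\diff u\le C_K|t-s|^2 .
\]
Kolmogorov's criterion on the localised event, combined with \eqref{eq:stopping_time_vanishing_probabilities}, then gives tightness. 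This is exactly why the hypothesis $\phi\in\mathscr A$ is imposed.

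\emph{Conclusion.} Tightness plus finite-dimensional convergence gives convergence in law to a continuous centred Gaussian $Z^\phi$ with the stated covariance.

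I expect the main obstacle to be the joint convergence step: showing that the $\mathcal F_0$-measurable i.i.d.\ term and the martingale part decouple in the limit. This requires the bracket limit of Lemma~\ref{lem:quadratic_covariation_M_phi} to hold in probability with a deterministic limit, and a conditional (stable) martingale CLT. Localisation by $\tau^K_N$ has to be threaded through consistently, since the remainder is controlled only on these events.
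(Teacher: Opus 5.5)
Your proposal is correct and follows the paper's proof. The ingredients are the same: the decomposition of Proposition~\ref{prop:empirical_measure_decomposition}, with the remainder removed by localisation through \eqref{eq:final_decomposition_limit_behaviour_remainder} and \eqref{eq:stopping_time_vanishing_probabilities}; a Rebolledo-type martingale CLT fed by Lemma~\ref{lem:quadratic_covariation_M_phi}; the classical CLT for the initial term; and Kolmogorov--Chentsov tightness from $|\langle \phi(u,\cdot),\eta\rangle|\le \lVert\phi\rVert_{\mathscr A}\lVert\eta\rVert_\ast$ combined with Proposition~\ref{lem:fluctuation_bound}.

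The one step where you differ is the joint convergence of the initial term and the martingale part. The paper simply asserts that $I^{N,\phi}_{t_j}$ and $M^{N,\phi,t_i}_{t_i}$ are mutually independent, which fails at finite $N$ because the martingale integrands depend on the initial positions. Your conditioning on $\mathcal F_0$ is what actually justifies asymptotic independence and the stated covariance: since the bracket limit is deterministic, the conditional characteristic functions of $\sqrt N M^N$ converge in $L^1$ to the Gaussian ones.
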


\begin{remark} \label{rem:remark_to_main_theorem_for_fdd_convergence}
If, in Theorem \ref{thm:functional_CLT_additive_fluctuations}, we only assume
$\phi\in\mathscr{H}^{\alpha/2,\alpha}_{P(p)}$, then the convergence of finite-dimensional distributions continues to hold. The additional assumption $\phi\in\mathscr{A}$ is only used to establish tightness of the sequence $(\Psi^N(\phi))_{N\geq 1}$ in $\mathscr{C}([0,T];\R)$, and hence to obtain functional weak convergence. Moreover, if we only require a fixed-time CLT instead of the functional CLT in Theorem \ref{thm:functional_CLT_additive_fluctuations}, then it suffices to assume a weaker form of the CLT fluctuation compatibility in Assumption \ref{ass:R2}, namely that we do not need uniformity in \eqref{eq:centre_bound_CLT_fluctuation_compatibility} over all $\varphi \in \mathscr{A}$ with $\norm{\varphi}_\mathscr{A} \le R$ inside the limit superior in $N$, but instead it is enough for \eqref{eq:centre_bound_CLT_fluctuation_compatibility} to hold for each $\varphi \in \mathscr{A}$ with $\norm{\varphi}_\mathscr{A} \le R$ separately.
\end{remark}

\section{Application: statistical inference}
\label{sec:Stat}
In this section, we apply the weak limit theory developed in Section~\ref{sec:MainStat} to two statistical testing problems for interacting particle systems. The first concerns the temporal constancy of macroscopic population characteristics that may enter the mean-field interaction, while the second concerns parametric hypothesis testing for the drift.

We assume that the complete particle trajectories
$$\big(X_t^{i,N}\big)_{0\le t\le T},
\qquad i=1,\ldots,N,$$
are observed continuously over the time interval $[0,T]$. This continuous-time observation scheme should be viewed as the idealised high-frequency limit of a discretely observed experiment.

\subsection{Testing temporal constancy of population characteristics}
In many McKean--Vlasov models, the dependence of the coefficients on the population law is mediated through a finite collection of observables. Let
$$\phi_1,\ldots,\phi_q \, \in \, \mathscr{H}^{\alpha/2,\alpha}_{P(p)}\cap \mathscr{A}$$
be known functions and define
$$
{\bf m}_t := {\bf m}(\mu_t)
:=
\big(
\langle \phi_1,\mu_t\rangle,\ldots,
\langle \phi_q,\mu_t\rangle
\big)^\top,
\qquad t\in[0,T].
$$
The process $({\bf m}_t)_{t\in[0,T]}$ describes a collection of population characteristics. Of particular interest are often polynomial observables, for instance $\phi_j(x)=x^{k_j}$, $k_j\le p$, since moments of the population law naturally arise in many interaction models. Our objective is to test whether these population characteristics remain constant over the observed time horizon. More precisely, we consider
\begin{align}
\mathcal H_0:
\,
{\bf m}_t={\bf m}_0
\; \, \text{for all }t\in[0,T]
\qquad
\text{against}
\qquad
\mathcal H_1:
\; 
{\bf m}_t\neq{\bf m}_0
\; \, \text{for some }t\in[0,T].
\label{test:constancy}
\end{align}
This hypothesis has a particularly natural interpretation when the dependence of the drift on the population law is of the form $b(x,\mu)
=
B(x,{\bf m}(\mu))$.
In this case, the quantities on which ${\bf m}_t$ acts are not merely descriptive features of the population, but they determine the mean-field feedback appearing in the response of each particle. Under $\mathcal H_0$, this feedback remains unchanged along the McKean--Vlasov flow, which in this specific case implies
$$
b(x,\mu_t)
=
B\bigl(x,{\bf m}_0\bigr),
\qquad
t\in[0,T].
$$
To characterise $\mathcal H_0$, define the \(\R^q\)-valued process
\begin{align}
{\bf F}_t
:=
\int_0^T \Big(\frac{t}{T}-\ind_{\{s\le t\}}\Big)\, {\bf m}_s\diff s,
\qquad
t\in[0,T].
\label{eq:def_F_constancy}
\end{align}
Then it holds that
$$
\mathcal H_0
\quad\Longleftrightarrow\quad
{\bf F}_t=0
\ \text{for all }t\in[0,T]
\quad\Longleftrightarrow\quad
\sup_{t\in[0,T]}\norm{{\bf F}_t}=0.
$$
Indeed, constancy of \({\bf m}\) immediately implies \({\bf F}\equiv0\). Conversely, if \({\bf F}_t=0\) for every \(t\), then
$$
\int_0^t{\bf m}_s\diff s
\; = \; 
\frac{t}{T}\int_0^T{\bf m}_s\diff s,
\qquad
t\in[0,T].
$$
Due to Assumption \ref{ass:Main}, ${\bf m}$ is continuous, and hence ${\bf m}_t = T^{-1}\int_0^T{\bf m}_s\diff s$.

The process ${\bf F}$ admits an immediate empirical counterpart, by replacing $\mu$ by $\mu^N$, which we denote $\widehat{\bf F}^N$. A direct application of our main result Theorem \ref{thm:functional_CLT_additive_fluctuations} now characterises the limit fluctuation of $\widehat{{\bf F}}^N$ around ${\bf F}$.

\begin{theorem} \label{ThFirstTest}
    Under the assumptions of Theorem \ref{thm:functional_CLT_additive_fluctuations}, there is weak functional convergence 
    $$ \sqrt{N}\big(\widehat{{\bf F}}^N - {\bf F}\big) \; \conv{N \to \infty}{\mathcal{L}} \; {\bf Z}$$
    on $(\mathscr{C}([0,T];\R^q), \, \norm{\cdot}_\infty)$, where ${\bf Z}$ is a $q$-dimensional centred Gaussian process with covariance function given by 
    $$ \operatorname{Cov}({\bf Z}^{(i)}_t,{\bf Z}^{(j)}_r) \; = \; \tilde \Lambda^{\phi_i,\phi_j}_{t,r}(t) \, - \, \frac{t}{T}\tilde \Lambda^{\phi_i,\phi_j}_{T,r}(r) \, - \, \frac{r}{T}\tilde \Lambda^{\phi_i,\phi_j}_{t,T}(t) \, + \, \frac{tr}{T^2}\tilde \Lambda^{\phi_i,\phi_j}_{T,T}(T), \qquad 0\le t\le r\le T, $$
    $i,j \in \{1,...,q\}$, with
    $$ \tilde \Lambda^{\phi_i,\phi_j}_{t,r}(t) := \; \operatorname{Cov}_{\mu_0}\big(\varphi^{\phi_i,t}(0,\cdot), \varphi^{\phi_j,r}(0,\cdot) \big) \, + \, \Lambda^{\phi_i,\phi_j}_{t,r}(t),$$
    where $\Lambda$ is defined as in Lemma \ref{lem:quadratic_covariation_M_phi} and  $\varphi^{\phi,r}$ is the solution to the backward PDE \eqref{eq:backwards_PDE}. Moreover, by continuous mapping, under $\mathcal{H}_0$ it holds that $\sqrt{N} \sup_{0\le t\le T}\lVert{\widehat{\bf F}^N_t \rVert}$ converges in law to $\sup_{0\le t\le T}\norm{{\bf Z}_t}$. 
\end{theorem}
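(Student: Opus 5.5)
We reduce everything to Theorem \ref{thm:functional_CLT_additive_fluctuations}, applied jointly to the $q$ test functions $\phi_1,\ldots,\phi_q$ (constant in time). Write $\Psi^N(\phi_j)_t=\int_0^t\langle \phi_j,\sqrt N(\mu^N_s-\mu_s)\rangle\diff s$. By definition of ${\bf F}$ and $\widehat{\bf F}^N$,
\[
\sqrt N\big(\widehat{\bf F}^{N,(j)}_t-{\bf F}^{(j)}_t\big)=\frac tT\,\Psi^N(\phi_j)_T-\Psi^N(\phi_j)_t ,\qquad t\in[0,T].
\]
Define the map $\Gamma:\mathscr C([0,T];\R^q)\to\mathscr C([0,T];\R^q)$ by $\Gamma(x)_t=\frac tT x_T-x_t$. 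It is linear and Lipschitz for $\norm{\cdot}_\infty$ with constant $2$. It therefore suffices to prove joint functional convergence of the vector $\boldsymbol\Psi^N:=(\Psi^N(\phi_1),\ldots,\Psi^N(\phi_q))$ to a centred continuous Gaussian process $\mathbf Y$ with $\operatorname{Cov}(Y^{(i)}_t,Y^{(j)}_r)=\tilde\Lambda^{\phi_i,\phi_j}_{t,r}(t)$ for $t\le r$. We then set ${\bf Z}=\Gamma(\mathbf Y)$ by continuous mapping.

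\emph{Joint convergence.} Tightness in $\mathscr C([0,T];\R^q)$ follows componentwise from the tightness part of Theorem \ref{thm:functional_CLT_additive_fluctuations}, since each $\phi_j\in\mathscr H^{\alpha/2,\alpha}_{P(p)}\cap\mathscr A$. For finite-dimensional distributions we use the Cramér--Wold device on $\sum_k c_k\Psi^N(\phi_{j_k})_{t_k}$. By Proposition \ref{prop:empirical_measure_decomposition} and linearity, this equals
\[
\sqrt N\Big\langle \sum_k c_k\varphi^{\phi_{j_k},t_k}(0,\cdot),\mu^N_0-\mu_0\Big\rangle+\sum_k c_k\sqrt N M^{N,\phi_{j_k},t_k}_{t_k}+o_{\Pro}(1).
\]
The remainder is $o_\Pro(1)$ by \eqref{eq:final_decomposition_limit_behaviour_remainder} together with the localisation \eqref{eq:stopping_time_vanishing_probabilities}.

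The initial term is an i.i.d. sum under $\mu_0^{\otimes N}$. Its variance is finite because $\varphi(0,\cdot)$ has growth $P(p)$ and Assumption M2 gives $2p$ moments, so the classical CLT applies. The martingale term is independent of $\mathcal F_0$ conditionally, and the martingale CLT applies with predictable brackets given by Lemma \ref{lem:quadratic_covariation_M_phi}. This yields an asymptotic Gaussian variance of the form
\[
\sum_{k,l}c_kc_l\Big(\operatorname{Cov}_{\mu_0}\big(\varphi^{\phi_{j_k},t_k}(0,\cdot),\varphi^{\phi_{j_l},t_l}(0,\cdot)\big)+\Lambda^{\phi_{j_k},\phi_{j_l}}_{t_k,t_l}(t_k\wedge t_l)\Big),
\]
which is exactly the quadratic form of $\tilde\Lambda$.

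This is the same argument as in the proof of Theorem \ref{thm:functional_CLT_additive_fluctuations}, and it goes through verbatim because Lemma \ref{lem:quadratic_covariation_M_phi} already covers pairs $\phi\neq\tilde\phi$. The cross-covariance between the initial and martingale parts vanishes. The main point to check is precisely this asymptotic independence of the initial term and the martingale term. We handle it by conditioning on $\mathcal F_0$ and using the characteristic-function form of the martingale CLT, with Lindeberg control coming from the $2p+3$ moment bounds \eqref{eq:moments_of_mu}.

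\emph{Covariance and conclusion.} The covariance of ${\bf Z}=\Gamma(\mathbf Y)$ is obtained by expanding bilinearly:
\[
\operatorname{Cov}\Big(\frac tT Y^{(i)}_T-Y^{(i)}_t,\ \frac rT Y^{(j)}_T-Y^{(j)}_r\Big).
\]
Using $t\le r\le T$ and the symmetry of $\Lambda$ in its arguments, this gives the four stated terms. Under $\mathcal H_0$ we have ${\bf F}\equiv0$, so $\sqrt N\widehat{\bf F}^N=\sqrt N(\widehat{\bf F}^N-{\bf F})$. The map $x\mapsto\sup_t\norm{x_t}$ is continuous on $\mathscr C([0,T];\R^q)$, so the continuous mapping theorem gives the final claim.
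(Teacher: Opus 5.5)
Your proposal is correct and follows the paper's route. The paper gives no separate proof and obtains the result as a direct application of Theorem~\ref{thm:functional_CLT_additive_fluctuations}, followed by the continuous linear map $x\mapsto\big(\tfrac tT x_T-x_t\big)_{t\in[0,T]}$ and continuous mapping for $x\mapsto\sup_t\norm{x_t}$; your bilinear expansion reproduces the stated covariance. You also handle the joint $q$-dimensional convergence explicitly (Cram\'er--Wold together with Lemma~\ref{lem:quadratic_covariation_M_phi} for $\phi\neq\tilde\phi$), and you obtain asymptotic independence of the initial term and the martingale part by conditioning on $\mathcal F_0$. This is more careful than the paper, which asserts exact independence of these terms; that fails literally, since the martingale integrands depend on the initial positions.
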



\subsection{Parametric testing for the drift} \label{subsec:parametric_testing}
We next consider a finite-dimensional parametric model for the drift. Let
\(b_1,\ldots,b_d:\R\times\mathcal P_p(\R)\to\R\)
be known functions satisfying the same regularity, growth, and moment assumptions as \(b\), as specified in Section~\ref{sec:setup}, and define
\begin{align*}
V
:=
\Bigg\{
b_\theta
=
\sum_{\ell=1}^d\theta_\ell b_\ell
:
\theta=(\theta_1,\ldots,\theta_d)^\top\in\R^d
\Bigg\}.
\end{align*}
Suppose that \(b\in V\), so that \(b=b_{\theta^*}\) for some unknown parameter \(\theta^*\in\R^d\). Given a matrix
\(\Xi\in\R^{r\times d}\) of rank \(r\le d\) and a vector \(\xi\in\R^r\), we wish to test
\begin{align}
\mathcal H_0:
\;
\Xi\theta^*=\xi
\qquad
\text{against}
\qquad
\mathcal H_1:
\; 
\Xi\theta^*\neq\xi.
\label{test:projection}
\end{align}
This testing problem is related to parametric inference for the drift component of interacting particle systems, which has been recently studied in e.g. \cite{AmorinoHeidariPilipauskaitePodolskij2023,SharrockKantasParpasPavliotis2023}.

For measurable functions
\(f,g:\R\times\mathcal P_p(\R)\to\R\), define the bilinear form
\[
\Psi(f,g)
:=
\int_0^T
\big\langle
f(\cdot,\mu_t)g(\cdot,\mu_t),
\mu_t
\big\rangle
\diff t.
\]
Assume that \(b_1,\ldots,b_d\) are linearly independent in
\[
L^2\big([0,T]\times\R,\mu_t(\diff x)\diff t\big).
\]
Then the Gram matrix
\[
{\bf \Pi}
:=
\big[
\Psi(b_\ell,b_k)
\big]_{\ell,k=1,\ldots,d}
\]
is positive definite and therefore invertible. Defining
\[
{\bf \Gamma}
:=
\big[
\Psi(b_\ell,b)
\big]_{\ell=1,\ldots,d},
\]
the identity \(b=b_{\theta^*}\) and bilinearity of $\Psi$ yield
\[
{\bf \Gamma}
=
{\bf \Pi}\theta^*,
\]
and consequently
\[
\theta^*
=
{\bf \Pi}^{-1}{\bf \Gamma}.
\]
Thus, asymptotically valid inference for the hypothesis \eqref{test:projection} reduces to studying empirical counterparts of \(\Psi(b_\ell,b_k)\) and \(\Psi(b_\ell,b)\). The entries of the Gram matrix ${\bf \Pi}$ are estimated by
\begin{align}
\widehat{\Psi}(b_j,b_k)_N
:=
\frac{1}{N}
\sum_{i=1}^N
\int_0^T
b_j\big(X_t^{i,N},\mu_t^N\big)
b_k\big(X_t^{i,N},\mu_t^N\big)
\diff t,
\qquad
j,k=1,\ldots,d.
\label{eq:estimator_Psi_bj_bk}
\end{align}
The mixed terms involving the unknown drift are estimated via the semimartingale dynamics,
\begin{align}
\widehat{\Psi}(b_j,b)_N
:=
\frac{1}{N}
\sum_{i=1}^N
\int_0^T
b_j\big(X_t^{i,N},\mu_t^N\big)
\diff X_t^{i,N},
\qquad
j=1,\ldots,d.
\label{eq:estimator_Psi_bj_b}
\end{align}

To derive the asymptotic behaviour of the estimators, we linearise the corresponding functionals
$$ \nu\mapsto \langle f(\cdot,\nu)g(\cdot,\nu),\nu\rangle,
\qquad f,g\in \{b_1,\ldots,b_d,b\},$$ 
around the McKean--Vlasov law $\mu_t$. In order to apply Proposition \ref{prop:empirical_measure_decomposition} to the resulting first-order terms and to control the associated second-order Taylor remainders, we impose the following additional assumption.

\begin{Assumption}{A3} \label{ass:R3}
    For all $f,g \in \{ b_1,...,b_d,b\}$ the following hold:
    \begin{enumerate}[(i)]
        \item the map $\phi_{(f,\, g)}:(x,t)\mapsto  D_\nu \langle f(\cdot,\mu_t)g(\cdot,\mu_t),\mu_t\rangle(x)$ belongs to $\mathscr{H}^{\alpha/2,\alpha}_{P(p)}$;
    \item the second-order linear derivative operator $\Phi: (\varphi, t,\nu) \mapsto D_{\nu\nu}\langle f(\cdot,\nu)g(\cdot,\nu),\nu\rangle$ is CLT fluctuation compatible.
    \end{enumerate}
\end{Assumption}

\noindent
For the formulation of the CLT, we introduce, for all $f,g,\tilde f, \tilde g \in \{b_1,...,b_d,b\}$,
$$ \widehat{Z}_N^{(f,g)} := \widehat{\Psi}(f,g)_N- \Psi(f,g) \qquad \text{ and } \qquad  C_0\big((f,g),(\tilde f, \tilde g)\big) :=\operatorname{Cov}_{\mu_0}\big(\varphi^{\phi_{(f,g)},T}(0,\cdot),\varphi^{\phi_{(\tilde f,\tilde g)},T}(0,\cdot) \big).$$
Recall the definition of $\Lambda$ in Lemma \ref{lem:quadratic_covariation_M_phi}. We will now state the CLT for the relevant quantities of this section.

\begin{theorem}[CLT for estimators]  \label{thm:CLT_for_estimators}
    Assume the regularity Conditions \ref{ass:R}, \ref{condition:coefficient_regularity_extended_backward_PDE} and the Assumptions \ref{ass:FB}--\ref{ass:R3}. Then, the following multivariate CLT holds:
    \begin{align*}
            \sqrt{N} \widehat{\bf{Z}}_N  :=  \sqrt{N} \begin{bmatrix*}[l]
            \operatorname{vec}\big(\widehat{Z}_N^{(b_j,b)}\big)_{j=1,...,d} \\
            \operatorname{vec}\big(\widehat{Z}_N^{(b_j,b_k)}\big)_{j,k=1,...,d} 
        \end{bmatrix*} 
        \; \conv{N\to \infty}{ \mathcal{L}} \;  \begin{bmatrix*}[l]
            \operatorname{vec}\big(Z^{(b_j,b)}\big)_{j=1,...,d}\\
            \operatorname{vec}\big(Z^{(b_j,b_k)}\big)_{j,k=1,...,d} 
        \end{bmatrix*}  =: {\bf{Z}} \; \sim \; \Norm_{d+d^2}({\bf{0}}, {\bf{\Sigma}}),
    \end{align*}
    where 
    \begin{align*}
    \begin{cases}
        \operatorname{Cov}(Z^{(b_j,b)}, Z^{(b_k,b)}) \;&=\; \Psi\big(a\cdot (b_j+\partial_x \varphi^{\phi_{(b_j,b)},T}), \, a\cdot \big(b_k+\partial_x \varphi^{\phi_{(b_k,b)},T}\big) \big) \, + \,  C_0\big((b_j,b),(b_k,b)\big) \\
        \operatorname{Cov}(Z^{(b_j,b_k)}, Z^{(b_\ell,b)}) \;&= \;\Psi\big(a\cdot \partial_x \varphi^{\phi_{(b_j,b_k)},T}, \,a\cdot \big(b_\ell+\partial_x \varphi^{\phi_{(b_\ell,b)},T}\big)  \big) \, + \,  C_0\big((b_j,b_k),(b_\ell,b)\big)\\
        \operatorname{Cov}(Z^{(b_j,b_k)}, Z^{(b_\ell,b_m)}) \; &= \;\Psi\big(a\cdot \partial_x \varphi^{\phi_{(b_j,b_k)},T}, \, a \cdot \partial_x \varphi^{\phi_{(b_\ell,b_m)},T}  \big) \, + \,  C_0\big((b_j,b_k),(b_\ell,b_m)\big).
        \end{cases}
    \end{align*}
\end{theorem}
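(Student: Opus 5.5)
Fix $f,g\in\{b_1,\dots,b_d,b\}$ and split each centred estimator into a Taylor-linearised part and a martingale part. We then run a joint martingale CLT for the resulting vector.

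\emph{Pure terms.} For $(f,g)=(b_j,b_k)$ we write
$$\widehat Z_N^{(f,g)}=\int_0^T\big(\Phi_{fg}(\mu^N_t)-\Phi_{fg}(\mu_t)\big)\diff t,\qquad \Phi_{fg}(\nu):=\langle f(\cdot,\nu)g(\cdot,\nu),\nu\rangle .$$
Remark~\ref{rem:remark_to_condition_R1} gives that $\Phi_{fg}$ is twice differentiable along the segments. Assumption~\ref{ass:R3}(ii), Assumption~\ref{ass:FB} and Corollary~\ref{cor:bounding_the_remainder} (with no $\varphi$-dependence) then yield
$$\sqrt N\,\widehat Z_N^{(f,g)}=\int_0^T\langle\phi_{(f,g)}(t,\cdot),\sqrt N(\mu^N_t-\mu_t)\rangle\diff t+o_{\Pro}(1).$$
Here negligibility holds on $\{T\le\tau^K_N\}$ in $L^1$, and \eqref{eq:stopping_time_vanishing_probabilities} removes the localisation. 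By Assumption~\ref{ass:R3}(i), $\phi_{(f,g)}\in\mathscr H^{\alpha/2,\alpha}_{P(p)}$, so Proposition~\ref{prop:empirical_measure_decomposition} with $r=T$ gives
$$\sqrt N\langle\varphi^{\phi_{(f,g)},T}(0,\cdot),\mu^N_0-\mu_0\rangle+\sqrt N M^{N,\phi_{(f,g)},T}_T+o_{\Pro}(1).$$

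\emph{Mixed terms.} For $(f,g)=(b_j,b)$ we use $\diff X^{i,N}_t=b(X^{i,N}_t,\mu^N_t)\diff t+a(X^{i,N}_t,\mu^N_t)\diff W^i_t$. This gives
$$\widehat\Psi(b_j,b)_N=\int_0^T\Phi_{b_jb}(\mu^N_t)\diff t+\frac1N\sum_{i}\int_0^Tb_j(X^{i,N}_t,\mu^N_t)\,a(X^{i,N}_t,\mu^N_t)\diff W^i_t .$$
The first piece is treated as above. The second piece, multiplied by $\sqrt N$, is an additional martingale $\sqrt N\widetilde M^{N,j}_T$.

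\emph{Joint limit.} Collect all components into a vector. This vector equals an initial term $\sqrt N\langle \boldsymbol\varphi(0,\cdot),\mu^N_0-\mu_0\rangle$ plus a vector martingale $\sqrt N\,\mathbf M^N$ with integrands $a\,\partial_x\varphi^{\phi_{(b_j,b_k)},T}$ or $a\,(b_j+\partial_x\varphi^{\phi_{(b_j,b)},T})$, plus $o_{\Pro}(1)$. Since $\mu^N_0$ is the empirical measure of i.i.d.\ $\mu_0$ samples, the initial term satisfies the classical multivariate CLT with covariance $C_0$. The moments are finite because $\varphi\in\mathscr A$ grows like $P(p)$ and $\mathcal M_{2p+3}(\mu_0)<\infty$. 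The initial term is $\mathcal F_0$-measurable, and the martingale has a deterministic limiting bracket, so the two parts are asymptotically independent. To see this, condition on $\mathcal F_0$, or equivalently apply the martingale CLT to the process concatenated with the initial block as a time-$0$ jump.

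For the brackets we argue as in Lemma~\ref{lem:quadratic_covariation_M_phi}. We have
$$\langle\sqrt N\mathbf M^{N,(\cdot)},\sqrt N\mathbf M^{N,(\cdot)}\rangle_T=\int_0^T\langle h(t,\cdot,\mu^N_t)\tilde h(t,\cdot,\mu^N_t),\mu^N_t\rangle\diff t .$$
This converges in probability to $\Psi(h,\tilde h)$ by propagation of chaos in $\mathcal W_p$ together with the Lipschitz/growth bounds on $a,b_j$ and the $P(p)$-control of $\partial_x\varphi$. The resulting limit covariance matches the three displayed formulas. A conditional Lindeberg condition follows from continuity and a Lyapunov bound via the $(2p+3)$-moments. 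The Cramér--Wold device combined with the martingale CLT then gives $\mathcal N_{d+d^2}(0,\mathbf\Sigma)$.

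The main obstacle is the bracket convergence for the mixed integrand $b_j\,a$ evaluated at $\mu^N_t$. It involves unbounded functions whose product grows like $|x|^{2}$ times $P(p)$-weights. Uniform integrability therefore has to be checked carefully with \eqref{eq:moments_of_mu}, using localisation by $\tau^K_N$ where the moments are insufficient.
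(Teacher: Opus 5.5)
Your proposal is correct and follows essentially the paper's route. You Taylor-linearise via Corollary~\ref{cor:bounding_the_remainder} and Assumption~\ref{ass:R3}, then apply Proposition~\ref{prop:empirical_measure_decomposition} with $r=T$, adding the extra martingale $Q^{N,j}$ for the mixed terms; this is exactly Proposition~\ref{prop:martingale_decomp}. You then compute the brackets as in Lemma~\ref{lem:quadratic_covariation_M_phi} and combine the martingale CLT with the i.i.d.\ CLT for the initial term, as the paper does.

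Your justification of the asymptotic independence of the initial term and the martingale part is more careful than the paper's. You condition on $\mathcal F_0$ or concatenate a time-$0$ block, using the deterministic limiting bracket. The paper simply asserts exact independence, even though the martingale integrands depend on $X^{i,N}_0$.
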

Denote by $\widehat{{\bf \Gamma}}_N$ and $\widehat{{\bf \Pi}}_N$ the respective estimators of ${\bf \Gamma}$ and ${\bf \Pi}$, and let $\widehat{{\bf \Pi}}_N^{-1}$ denote the pseudoinverse of $\widehat{{\bf \Pi}}_N$. 

\begin{theorem}[CLT for the projections] \label{thm:CLT_for_projection_test}
    Under the notation of \eqref{test:projection}, it holds that 
    $$ \sqrt{N} \big(\Xi\widehat{{\bf \Pi}}_N^{-1} \widehat{{\bf \Gamma}}_N - \Xi {\bf \Pi}^{-1} {\bf \Gamma}\big) \; \conv{N\to \infty}{ \mathcal{L}} \; \Norm_r({\bf 0}, J_G  {\bf \Sigma} \, J_G^\top),$$
    where $J_G:= J_G ({\bf \Gamma}, {\bf \Pi})$ is the Jacobian matrix at $({\bf \Gamma}, {\bf \Pi})$ of the map $G: \R^d \times \operatorname{GL}_d(\R) \to \R^r$, $G({\bf y}, {\bf E}):= \Xi \, {\bf E}^{-1}{\bf y}$, with respect to the coordinates $({\bf y}, \operatorname{vec}({\bf E}))$.
\end{theorem}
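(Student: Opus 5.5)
This is a delta-method argument built on Theorem~\ref{thm:CLT_for_estimators}. Write $\widehat{{\bf \Gamma}}_N=[\widehat\Psi(b_\ell,b)_N]_{\ell}$ and $\widehat{{\bf \Pi}}_N=[\widehat\Psi(b_\ell,b_k)_N]_{\ell,k}$. Then the vector $\sqrt N\,\widehat{\bf Z}_N$ is exactly $\sqrt N\big((\widehat{{\bf \Gamma}}_N,\operatorname{vec}(\widehat{{\bf \Pi}}_N))-({\bf \Gamma},\operatorname{vec}({\bf \Pi}))\big)$, in the coordinate ordering used for $J_G$. Theorem~\ref{thm:CLT_for_estimators} gives convergence in law of this vector to $\Norm_{d+d^2}({\bf 0},{\bf \Sigma})$. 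In particular $(\widehat{{\bf \Gamma}}_N,\widehat{{\bf \Pi}}_N)\to({\bf \Gamma},{\bf \Pi})$ in probability, since the fluctuations are $O_{\Pro}(N^{-1/2})$.

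\textbf{Step 1: the pseudoinverse is eventually the inverse.} Since ${\bf \Pi}$ is positive definite, $\operatorname{GL}_d(\R)$ is an open neighbourhood of ${\bf \Pi}$. Hence $\Pro(\widehat{{\bf \Pi}}_N\in \operatorname{GL}_d(\R))\to1$, and on this event the pseudoinverse coincides with the ordinary inverse. So $\sqrt N(\Xi\widehat{{\bf \Pi}}_N^{-1}\widehat{{\bf \Gamma}}_N-G(\widehat{{\bf \Gamma}}_N,\widehat{{\bf \Pi}}_N))\ind_{\{\widehat{{\bf \Pi}}_N\in \operatorname{GL}_d\}}=0$. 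The complementary event has vanishing probability, so by Slutsky's lemma it does not affect the weak limit. The pseudoinverse is used only to make the statistic well defined on the exceptional event; this is essentially the only point needing care.

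\textbf{Step 2: differentiability and the delta method.} The map $G({\bf y},{\bf E})=\Xi{\bf E}^{-1}{\bf y}$ is $C^\infty$ on $\R^d\times\operatorname{GL}_d(\R)$, because matrix inversion is smooth there. Its differential at $({\bf \Gamma},{\bf \Pi})$ is $(h,H)\mapsto \Xi{\bf \Pi}^{-1}h-\Xi{\bf \Pi}^{-1}H{\bf \Pi}^{-1}{\bf \Gamma}$. In vec coordinates this gives $J_G=\big[\Xi{\bf \Pi}^{-1},\ -({\bf \Gamma}^\top{\bf \Pi}^{-\top})\otimes(\Xi{\bf \Pi}^{-1})\big]$, using $\operatorname{vec}(AHB)=(B^\top\otimes A)\operatorname{vec}(H)$. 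The explicit form is not needed for the statement.

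A first-order Taylor expansion on a neighbourhood of $({\bf \Gamma},{\bf \Pi})$ gives
\[
\sqrt N\big(G(\widehat{{\bf \Gamma}}_N,\widehat{{\bf \Pi}}_N)-G({\bf \Gamma},{\bf \Pi})\big)=J_G\,\sqrt N\,\widehat{\bf Z}_N+\oland_{\Pro}(1).
\]
The remainder is bounded by $\sqrt N\,\oland(\lVert\widehat{\bf Z}_N\rVert)$, which is $\oland_\Pro(1)$ by the tightness of $\sqrt N\widehat{\bf Z}_N$. By the continuous mapping theorem, $J_G\sqrt N\widehat{\bf Z}_N\to J_G{\bf Z}\sim\Norm_r({\bf 0},J_G{\bf \Sigma}J_G^\top)$.

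Finally, Slutsky's lemma together with Step 1 yields the claim, using $G({\bf \Gamma},{\bf \Pi})=\Xi{\bf \Pi}^{-1}{\bf \Gamma}$. All probabilistic content is carried by Theorem~\ref{thm:CLT_for_estimators}; the remaining steps are the standard Slutsky and Taylor-remainder bookkeeping.
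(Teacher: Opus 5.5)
Your proposal is correct and follows essentially the same route as the paper: the joint CLT of Theorem~\ref{thm:CLT_for_estimators}, consistency of $\widehat{{\bf \Pi}}_N$ so that the pseudoinverse coincides with the inverse with probability tending to one, and the multivariate delta method applied to $G$. Your Jacobian also agrees with the paper's, since ${\bf \Gamma}^\top{\bf \Pi}^{-\top}=({\bf \Pi}^{-1}{\bf \Gamma})^\top=(\theta^*)^\top$.
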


\begin{remark} (Variance estimation)
To formally construct the testing procedures discussed in this section, we need consistent estimators of the asymptotic variances appearing in Theorems \ref{ThFirstTest} and \ref{thm:CLT_for_estimators}. In this remark, we focus on the setting of Theorem \ref{thm:CLT_for_estimators}, which constitutes the more involved case. Our aim is to explain the main idea behind the proposed estimator rather than to provide a rigorous proof of its consistency.

To estimate the asymptotic covariance matrix $\Sigma$, we propose a subsampling procedure, as a direct estimation of this quantity appears to be rather difficult. Let $K=K_N$ denote the number of blocks and partition the set of particle indices $\{1,\ldots,N\}$ into disjoint blocks
\[
B_1,\ldots,B_K,
\]
each containing $N/K$ particles. We assume that
\[
K\to\infty
\qquad\text{and}\qquad
\frac{N}{K}\to\infty
\]
as $N\to\infty$. For each block $B_k$, $k=1,\ldots,K$, define
\begin{align}
\widehat{Z}_{N,k}^{(f,g)}
:=
\widehat{\Psi}(f,g)_{N,k}
-
\widehat{\Psi}(f,g)_{N},
\label{defhatv}
\end{align}
where $\widehat{\Psi}(f,g)_{N,k}$ is defined analogously to
$\widehat{\Psi}(f,g)_{N}$, but is computed using only the particles
$X^i$ with indices $i\in B_k$. In particular, all empirical measures
entering $\widehat{\Psi}(f,g)_{N,k}$ are constructed exclusively from
the observations corresponding to particles in $B_k$. The vector
$\widehat{\mathbf Z}_{N,k}$ is defined analogously.

We then define the subsampling estimator of the asymptotic covariance
matrix $\Sigma$ by
\[
\widehat{\Sigma}
:=
\frac{1}{K}
\sum_{k=1}^K
\frac{N}{K}
\widehat{\mathbf Z}_{N,k}
\widehat{\mathbf Z}_{N,k}^{\top}.
\]
The intuition behind this estimator is as follows. Since each block
contains $N/K$ particles, the normalization $\sqrt{N/K}$ corresponds to the
scaling in the central limit theorem applied at the block level.
Moreover, since the blocks are disjoint and the particles become
asymptotically independent in the propagation-of-chaos regime, the
random matrices
\[
\frac{N}{K}
\widehat{\mathbf Z}_{N,k}
\widehat{\mathbf Z}_{N,k}^{\top},
\qquad k=1,\ldots,K,
\]
can be viewed as asymptotically independent proxies for the covariance
matrix $\Sigma$. Averaging over an increasing number of blocks therefore
suggests, by a weak-law-of-large-numbers argument, that
\(\widehat{\Sigma}\xrightarrow{\mathbb P}\Sigma.\)
Consistency of an analogous subsampling procedure has been established
rigorously in \cite{heidari_podolskij} for i.i.d.\ observations from a
McKean--Vlasov model. We expect the same principle to apply in the
present interacting-particle setting, but omit a formal proof here.
\end{remark}

\section{Proofs} \label{sec:Proofs}

\subsection{Proofs to Section \ref{subsec:technicalities}}
We begin with the proof of Theorem \ref{thm:propagation_of_polynomial_control}.

\begin{proof}[Proof of Theorem \ref{thm:propagation_of_polynomial_control}]
We prove the assertion on an arbitrary time interval
$I=[a,b]\subseteq[0,T]$, with $a<b$ and terminal condition $g$ at time $b$. In order to do so, we will transform the problem into an equivalent problem to which we can apply \cite[Thm.~3.8]{lorenzi}.

Write $P=P(r)$ and consider $\varphi=Pv$ for any $v \in \mathscr{C}^{1,2}(I\times \R)$. Since $P$ does not depend on time,
\[
\partial_t\varphi=Pv_t,\qquad
\partial_x\varphi=P'v+P \partial_x v,\qquad
\partial_{xx}\varphi=P''v+2P' \partial_x v+P\partial_{xx} v.
\]
Substituting these identities into the backward PDE \eqref{eq:backward_PDE_standard_global_Schauder} restricted on the time interval $I$ instead of $[0,T]$, and dividing
by $P$ gives the equivalent problem 
\begin{align}
\partial_t v+q \partial_{xx}v +L \partial_x v +\ell v- \tilde f=0,
\qquad v(b,\cdot)=\tilde g, \label{eq:proof_first_Schauder_equivalent_PDE}
\end{align}
where $q:=2^{-1}\sigma^2$ and 
\[
h_1:=P'P^{-1},\; \; 
h_2:=P''P^{-1},\; \; 
L:=\beta+2qh_1,\; \; 
\ell :=qh_2+\beta h_1, \; \; 
\tilde g :=g/P,\; \; 
\tilde f:=f/P.
\]
The assumptions on $g,f$ imply
$\tilde g \in\mathscr H^{2+\alpha}$ as well as $\operatorname{sup}_{0\le t \le T}\lVert \tilde f(t,\cdot)\rVert_{\mathscr{H}^\alpha}< \infty$. Moreover, for every $j\ge 0$,
\[
|\partial_x^jh_1(x)|
\lesssim (1+|x|)^{-1-j},
\qquad
|\partial_x^jh_2(x)|
\lesssim (1+|x|)^{-2-j}.
\]
The linear growth \eqref{eq:linear_growth} implies
\[
|\beta(t,x)|+|\sigma(t,x)|\lesssim 1+|x|,
\qquad
\lambda/2\le q (t,x)\lesssim 1+x^2.
\]
Differentiating $q$ and using Assumption \ref{condition:coefficient_regularity}(i) gives, for
$1\le k\le3$,
\[
|\partial_x^k q(t,x)|\lesssim 1+|x|,\qquad |\partial_x^kq(t,x)|
\lesssim 1+|\sigma(t,x)|\lesssim q(t,x).
\]
where we have estimated $q$ by $|\sigma|\ge\sqrt\lambda$ in the last inequality.
The product rule therefore yields
\[
|L(t,x)|\lesssim 1+|x|,\qquad
\sup_{\substack{0\le t\le T\\x \in \R}}|\partial_x^kL(t,x)|<\infty, \qquad \sup_{\substack{0\le t\le T\\x \in \R}}|\partial_x^m \ell (t,x)|<\infty,
\]
for $1\le k\le 3$ and $0\le m\le 3$. Note that the omitted constants do not depend on $f,g$ and $v$. By Assumption \ref{condition:coefficient_regularity}(ii), the coefficients $q,L,\ell$ and their spatial derivatives
up until order three are jointly continuous and have uniformly bounded (in time) local spatial $\mathscr H^\alpha$ norms.

To put the equation in suitable forward form, write $d=b-a$, reverse time
by setting $\tau=b-t$, and put
$$
w(\tau,x):=v(b-\tau,x), \quad \hat q(\tau,x):=q(b-\tau,x), \quad \hat L(\tau,x):=L(b-\tau,x),$$
$$\hat \ell(\tau,x):=\ell(b-\tau,x), \quad\hat f(\tau, x):= \tilde f(b-\tau, x)$$
for $0\le\tau\le d$.
The equation \eqref{eq:proof_first_Schauder_equivalent_PDE} for $v$ is then equivalent to the forward-in-time equation
\begin{align*}
\partial_\tau w -\hat q \partial_{xx}w -\hat L \partial_x w -\hat \ell w + \hat f=0,
\qquad w(0,\cdot)=\tilde g.\label{eq:proof_second_Schauder_second_equivalent_PDE}
\end{align*}
In order to ensure the existence of a solution to this forward problem, we will invoke \cite[Thm.~3.8]{lorenzi}. To this end, we verify the hypotheses of the theorem, which---above all---amounts to the verification of \cite[Hypotheses~2.1]{lorenzi}, with $\nu=q$. Note that it suffices to consider $g, L, \ell$ since $\hat g, \hat L, \hat \ell$ are simply time-reversed versions of the former. From the bounds above, we deduce ellipticity (2.1(ii)), the diffusion coefficient derivative conditions (2.1(v)), and the potential bound (2.1.(iv)). The growth conditions (2.1(iii)) hold trivially due to
\[
|x|q\le(1+x^2)q,\qquad
q\le(1+x^2)q,\qquad
xL\, \lesssim \, (1+x^2) \, \lesssim \, (1+x^2)q.
\]
The drift and potential derivative bounds (2.1(vi)) allow the functions
$d,r_0,\varrho$ in \cite[Hypotheses~2.1(vi)]{lorenzi}
to be chosen as positive constants. Since $q\ge\lambda/2$,
their required inequality holds after increasing its constant.
Finally, for $H(x)=1+x^2$,
\[
2q(t,x)+2xL(t,x)+ \ell(t,x) (1+x^2) \, \lesssim \, 
H(x),
\]
again as a result of the bounds above, which verifies the
Lyapunov condition (2.1(vii)). The local spatial regularity (modified 2.1(i)) required by \cite[Thm.~3.8]{lorenzi} is given due to Assumption \ref{condition:coefficient_regularity}(ii) and the fact that $h_1,h_2 \in \mathscr{C}^\infty(\R)$. Finally, the required regularity of $\hat f, \tilde g$ follows directly from $g \in \mathscr{H}^{2+\alpha}_{P(r)}$ and $f \in \mathscr{C}([0,T]\times \R)$ with $\operatorname{sup}_{0\le t\le T}\lVert f(t,\cdot)\rVert_{\mathscr{H}^\alpha_{P(r)}}< \infty.$ 

To obtain a constant independent
of $I$, extend the transformed problem from $[0,d]$ to the fixed
interval $[0,T]$ by defining
\[
\eta_I(\tau):=b-\operatorname{min}\{\tau,d\},
\qquad
F_I(\tau,x):=\hat{f}(\eta_I(\tau),x).
\]
Now, the coefficients of the PDE
\begin{align}
\partial_\tau w - \hat q(\eta_I(\cdot),\cdot) \partial_{xx}w - \hat L(\eta_I(\cdot),\cdot) \partial_x w - \hat \ell(\eta_I(\cdot),\cdot) w + F_I =0,
\qquad w(0,\cdot)=\tilde g\label{eq:proof_third_Schauder_second_equivalent_PDE}
\end{align}
are reversed on
$[0,d]$ and frozen at $a$ afterwards. All the verified regularity
and structural bounds are preserved, independently of $I$. Therefore \cite[Thm.~3.8]{lorenzi} guarantees the existence of a unique solution $w_I$ to \eqref{eq:proof_third_Schauder_second_equivalent_PDE} satisfying
$\operatorname{sup}_{\tau,x} |w_I(\tau,x)| < \infty$. Moreover, it holds that 
\begin{align*}
    \operatorname{sup}_{0\le \tau \le T} \lVert w_I(\tau,\cdot) \rVert _{\mathscr H^{2+\alpha}} \; &\le \; C\big(\lVert g\rVert _{\mathscr H^{2+\alpha}}+ \operatorname{sup}_{0\le \tau\le T} \lVert F_I(\tau ,\cdot) \rVert _{\mathscr H^{\alpha}} \big)\\
    &\le \; C\big(\lVert g\rVert _{\mathscr H^{2+\alpha}}+ \operatorname{sup}_{t \in I} \lVert f(t ,\cdot) \rVert _{\mathscr H^{\alpha}_{P(r)}} \big)
\end{align*}
for some constant $C>0$. According to \cite[Rem.~2.8(i)]{lorenzi}, the fixed time horizon and the common structural bounds ensure
that $C>0$ is independent of $I,f,g$. Reverting $w_I$ back to $v=\varphi/P$ yields
\[
\sup_{t\in I}
\lVert\varphi(t,\cdot)\rVert_{\mathscr H^{2+\alpha}_{P(r)}}
\le C\big(
\lVert g\rVert_{\mathscr H^{2+\alpha}_{P(r)}}
+\sup_{t\in I}
\lVert f(t,\cdot)\rVert_{\mathscr H^\alpha_{P(r)}}
\big).
\]

Finally, the substitutions $\varphi=Pv$ and
$w(\tau,x)=v(b-\tau,x)$ are invertible and map
every classical solution in the prescribed growth class to a bounded
classical solution of the transformed forward problem on $[0,d]$.
Uniqueness therefore follows from \cite[Thm.~3.8]{lorenzi}
on that interval.
\end{proof}

\begin{proof}[Proof of Lemma \ref{lem:condition_R2_restriction_to_time_interval}]
Let $h:=b-a$ and set 
$$ \gamma_h(s):= \; \frac{hs}{h+2s}, \qquad s\ge 0.$$
Define the extension operator $E_I: \widetilde{\mathscr H}^{1,2+\alpha}_{P(p)}(I\times\mathbb R) \to \widetilde{\mathscr H}^{1,2+\alpha}_{P(p)}([0,T]\times\mathbb R)$ by
$$ (E_Iu)(t,x) := \; \begin{cases} 
2u(a,x) - u(a+\gamma_h(a-t),x), \quad &\text{if } t<a,\\
u(t,x), \quad &\text{if } a\le t \le b,\\
2u(b,x) - u(b-\gamma_h(b-t),x), \quad &\text{if } b<t.\\
\end{cases}$$
Note that 
$$0\le \gamma_h \le h/2, \quad \gamma_h(0)=0, \quad 0< \gamma_h'\le 1, \quad \text{and } \; \; \gamma_h'(0)=1.$$
Therefore, indeed $E_Iu \in \widetilde{\mathscr H}^{1,2+\alpha}_{P(p)}([0,T]\times\mathbb R)$ and for any $u \in \widetilde{\mathscr H}^{1,2+\alpha}_{P(p)}(I\times\mathbb R)$ it holds that 
$$ \lVert E_Iu\rVert_{\widetilde{\mathscr H}^{1,2+\alpha}_{P(p)}([0,T]\times\mathbb R)} \; \le \; 3\lVert u\rVert_{\widetilde{\mathscr H}^{1,2+\alpha}_{P(p)}(I\times\mathbb R)}.$$
Define $\mathcal K^o_Iu:=\restr{(\mathcal K^o(E_Iu))}{I}$ and note that, since the value of $K^ov$ at time $t$ only depends on $v(t,\cdot)$ and $E_I(\restr{v}{I})(t,\cdot)=v(t,\cdot)$ for all $t\in I$, 
$$ (\mathcal{K}^o(E_I(v|_I)))(t,x) \; = \; (\mathcal K^o v)(t,x), \qquad t\in I.$$
Finally, we deduce
\begin{align*}
    \lVert \mathcal K^o_Iu\rVert_{\widetilde{\mathscr H}^{1,2+\alpha}_{P(p)}(I\times\mathbb R)} \; \le \; \lVert \mathcal K^o(E_Iu)\rVert_{\widetilde{\mathscr H}^{1,2+\alpha}_{P(p)}([0,T] \times\mathbb R)} &\le \norm{\mathcal K^o}_{\text{op}} \lVert E_Iu\rVert_{\widetilde{\mathscr H}^{1,2+\alpha}_{P(p)}([0,T]\times\mathbb R)} \\
    &\le \; 3 \norm{\mathcal K^o}_{\text{op}} \lVert u\rVert_{\widetilde{\mathscr H}^{1,2+\alpha}_{P(p)}(I\times\mathbb R)}.
\end{align*}
\end{proof}

Now, we will begin with the following uniqueness lemma for solutions to the extended backward PDE \eqref{eq:backward_PDE_with_A_operator_Schauder}.
\begin{lem}[Uniqueness lemma] \label{lem:uniqueness_lemma_PDE_proof}
    Suppose that Condition \ref{condition:coefficient_regularity_extended_backward_PDE} holds, and let $\lambda \in [0,1]$ and $I=[c,d]\subseteq [0,T]$, $0\le c<d\le T$. 
    Then, any solution $\varphi \in \widetilde{\mathscr{H}}^{1,2+\alpha}_{P(p)}(I \times \R)$ to the backward PDE
    \begin{align} \partial_t \varphi(t,x) \, + \, \mathcal{V}_{t,\mu_t}\varphi(x) \, + \, \lambda(\mathcal{K}_{I}^o \varphi)(t,x) \; = \; 0, \qquad \varphi(d,x)=0, \label{eq:extended_backward_PDE_for_uniqueness_lemma} \end{align}
    satisfies $\varphi(t,x)= 0$ for all $c\le t\le d$, $x \in \R$.
\end{lem}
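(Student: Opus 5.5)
Let $\varphi\in\widetilde{\mathscr H}^{1,2+\alpha}_{P(p)}(I\times\R)$ solve \eqref{eq:extended_backward_PDE_for_uniqueness_lemma}. The plan is to treat the nonlocal term $\lambda\mathcal K^o_I\varphi$ as a source for the standard backward equation \eqref{eq:backward_PDE_standard_global_Schauder}. We then exploit the fact that this source gains regularity and loses one power of growth, and close with a short-time contraction iterated backwards in time.

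\textbf{Step 1 (recasting as a standard problem).} Fix a subinterval $J=[s,d]\subseteq I$ and set $f:=-\lambda\,\mathcal K^o_J(\varphi|_J)$. By Lemma \ref{lem:condition_R2_restriction_to_time_interval}, $f\in \mathscr H^{(\alpha+\varepsilon)/2,\alpha+\varepsilon}_{P(p-1)}(J\times\R)$, with
\[
\|f\|_{\mathscr H^{(\alpha+\varepsilon)/2,\alpha+\varepsilon}_{P(p-1)}(J\times\R)}\le \lambda M\|\varphi\|_{\widetilde{\mathscr H}^{1,2+\alpha}_{P(p)}(J\times\R)},
\]
where $M$ does not depend on $J$. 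Since $P(p-1)\le P(p)$, $f$ lies in $\mathscr H^\alpha_{P(p)}$ uniformly in time and is continuous. Moreover, $\varphi|_J$ has $P(p)$-growth and solves $\partial_t\varphi+\mathcal V_{t,\mu_t}\varphi-f=0$ with $\varphi(d,\cdot)=0$. By the uniqueness part of Theorem \ref{thm:propagation_of_polynomial_control} (with $r=p$), $\varphi|_J$ is the solution given there. Theorem \ref{thm:propagation_of_polynomial_control} and Corollary \ref{cor:solution_polynomial_growth_time_derivative_bound} then give
\[
\|\varphi\|_{\widetilde{\mathscr H}^{1,2+\alpha}_{P(p)}(J\times\R)}\le C\sup_{t\in J}\|f(t,\cdot)\|_{\mathscr H^\alpha_{P(p)}},
\]
with $C$ independent of $J$.

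\textbf{Step 2 (extracting a small factor $h=d-s$).} This step is the main obstacle: Step 1 alone only yields $\|\varphi\|\le CM\|\varphi\|$, which is circular. I would obtain smallness in one of two ways.

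\emph{Option (a): using the terminal condition.} Since $\varphi(d,\cdot)=0$, the bound on $\partial_t\varphi/P(p+2)$ gives $|\varphi(t,x)|\le h\,C'\|\varphi\|P(p+2)(x)$. This costs two powers of growth. Interpolating against the $\mathscr H^{2+\alpha}_{P(p)}$ bound (weighted Gagliardo–Nirenberg / Hölder interpolation) gives smallness of $\varphi$ in a weaker, lower-growth norm. The gain of $\varepsilon$ in Hölder regularity and the loss of one power of growth in $\mathcal K^o$ are exactly what allow the source to be measured in a weaker norm. Concretely, I would exploit that $f(d,\cdot)=-\lambda\mathcal K^o\varphi(d,\cdot)$ vanishes, since $\mathcal K$ at time $d$ depends only on $\varphi(d,\cdot)=0$. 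Together with the time-Hölder regularity of order $(\alpha+\varepsilon)/2$, this yields
\[
\sup_{t\in J}\|f(t,\cdot)\|_{\mathscr H^{\alpha}_{P(p)}}\lesssim h^{\delta}\,\|f\|_{\mathscr H^{(\alpha+\varepsilon)/2,\alpha+\varepsilon}_{P(p-1)}}
\]
for some $\delta=\delta(\varepsilon)>0$. This uses the interpolation inequality $[g]_{\alpha}\le \|g\|_\infty^{\varepsilon/(\alpha+\varepsilon)}[g]_{\alpha+\varepsilon}^{\alpha/(\alpha+\varepsilon)}$, applied to the weighted function $f/P(p)$, whose weight ratio $P(p-1)/P(p)$ has bounded derivatives. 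The sup part is bounded by $h^{(\alpha+\varepsilon)/2}$ times the time-Hölder seminorm.

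\emph{Option (b): fallback.} Alternatively, I would apply the Itô formula along the McKean–Vlasov process, as in a Feynman–Kac representation, to gain a factor $h$ directly.

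\textbf{Step 3 (conclusion).} Combining Steps 1 and 2 gives $\|\varphi\|_J\le CM h^\delta\|\varphi\|_J$. Choose $h_0$ with $CMh_0^\delta<1$; this is possible because $C$ and $M$ are uniform in the interval, by Theorem \ref{thm:propagation_of_polynomial_control} and Lemma \ref{lem:condition_R2_restriction_to_time_interval}. Then $\varphi\equiv0$ on $[d-h_0,d]\cap I$. Now restart at terminal time $d-h_0$ with zero terminal data; the same argument applies with unchanged constants. After finitely many steps, about $T/h_0$ of them, this covers $[c,d]$, so $\varphi\equiv0$ on $I\times\R$. The argument is uniform in $\lambda\in[0,1]$, which is what the later continuity-method or fixed-point existence proof needs.
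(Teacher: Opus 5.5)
Your proof is correct. The part that actually closes the argument is the ``concretely'' version of Option (a), and it is a genuinely different route from the paper's. You use that $f=-\lambda\mathcal K^o_J(\varphi|_J)$ vanishes at the terminal time $d$: the operator $\mathcal K_{d,\mu_d}$ only involves $\partial_x\varphi(d,\cdot)$ and $\partial_{xx}\varphi(d,\cdot)$, which are identically zero. The parabolic H\"older seminorm of order $\alpha+\varepsilon$ supplied by Condition~\ref{condition:coefficient_regularity_extended_backward_PDE} then gives $\sup_{t\in J}\lVert f(t,\cdot)/P(p-1)\rVert_\infty\le h^{(\alpha+\varepsilon)/2}[f/P(p-1)]_{\alpha+\varepsilon}$. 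Spatial interpolation yields $\sup_{t\in J}\lVert f(t,\cdot)\rVert_{\mathscr H^\alpha_{P(p)}}\lesssim h^{\varepsilon/2}\lVert f\rVert_{\mathscr H^{(\alpha+\varepsilon)/2,\alpha+\varepsilon}_{P(p-1)}(J\times\R)}$. You then combine this with Theorem~\ref{thm:propagation_of_polynomial_control} and Corollary~\ref{cor:solution_polynomial_growth_time_derivative_bound}; the corollary is genuinely needed, because the input norm of $\mathcal K^o$ contains $\partial_t\varphi$. With the interval-uniform bound of Lemma~\ref{lem:condition_R2_restriction_to_time_interval}, this closes with the factor $h^{\varepsilon/2}$, and the backward iteration is as you describe.

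The paper exploits the same mechanism, vanishing at a nearby time plus time-H\"older regularity, but on test functions rather than on the solution. It applies $\mathcal K^o$ to the time bump $\chi((s-t)/\delta)h(x)$ to derive the fixed-time interpolation bound $\lVert\mathcal K^o\bar h(t,\cdot)\rVert_{\mathscr H^\alpha_{P(p)}}\lesssim L\lVert h\rVert^{1-\varepsilon/2}_{\mathscr H^{2+\alpha}_{P(p)}}\lVert h\rVert^{\varepsilon/2}_{\mathscr C^0_{P(p)}}$. It then gets the small factor from $\sup_s\lVert\varphi(s,\cdot)\rVert_{\mathscr C^0_{P(p)}}\lesssim |I|L\sup_s\lVert\varphi(s,\cdot)\rVert_{\mathscr H^{2+\alpha}_{P(p)}}$. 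That bound is proved via a Feynman--Kac representation along the frozen-law SDE $X^{t,x}$, essentially your Option (b), at the cost of moment propagation with explicit dependence on the starting point and a localisation argument.

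Your route is shorter and purely analytic. The paper's works only with spatial norms of $\varphi$, and its two intermediate estimates are reused in the a priori bound for Theorem~\ref{thm:propagation_of_polynomial_control_extended_PDE_A}, where the terminal data on subintervals are nonzero. Your argument adapts there as well by splitting off $f(d,\cdot)$, whose norm is controlled by $\lVert\varphi(d,\cdot)\rVert_{\mathscr H^{2+\alpha}_{P(p)}}$. The preliminary remark in Option (a) about $\lvert\varphi\rvert\le hC'\lVert\varphi\rVert P(p+2)$ and weighted Gagliardo--Nirenberg interpolation is not needed and can be dropped.
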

\begin{proof}
Denote by $L>0$ the operator norm bound from Condition \ref{condition:coefficient_regularity_extended_backward_PDE}.

In a first step, let $h \in \mathscr{H}^{2+\alpha}_{P(p)}$ and denote by $\bar{h}(s,x):=h(x)$ its constant-in-time extension. We are going to show that for every $0\le t\le T$,
\begin{align}
        \lVert \mathcal{K}^o\bar h(t,\cdot) \rVert_{\mathscr{C}^{0}_{P(p-1)}} \; &\le \; L \norm{h}_{\mathscr{H}^{2+\alpha}_{P(p)}} \label{eq:uniqueness_proof_1}\\
    \lVert \mathcal{K}^o\bar h(t,\cdot) \rVert_{\mathscr{H}^{\alpha}_{P(p-1)}} \; &\le \; C_TL \norm{h}_{\mathscr{H}^{2+\alpha}_{P(p)}}^{1-\varepsilon/2} \norm{h}_{\mathscr{C}^0_{P(p)}}^{\varepsilon/2}  \label{eq:uniqueness_proof_2}
\end{align}
for some constant $C_T>0$ only dependent on $T$. The estimate  \eqref{eq:uniqueness_proof_1} follows immediately from Condition \ref{condition:coefficient_regularity_extended_backward_PDE}. Now, choose a smooth $\chi \in \mathscr{C}^\infty_c((-1,1))$ with $\chi(0)=1$ and set 
$$ v_{\delta}(s,x)\; := \; \chi\Big( \frac{s-t}{\delta}\Big) h(x), \qquad 0< \delta \le T/2.$$
Clearly, $v_{\delta} \in \widetilde{\mathscr{H}}^{1,2+\alpha}_{P(p)}$ and $v_{\delta}(t,\cdot)=h$, and therefore, by Condition \ref{condition:coefficient_regularity_extended_backward_PDE},
\begin{align*}
    \frac{|\mathcal{K}^o \bar{h}(t,x)|}{P(p-1)(x)\delta^{(\alpha+\varepsilon)/2}} \; &= \; \frac{|\mathcal{K}^o v_{\delta}(t,x)-\mathcal{K}^o v_{\delta}(s_\delta,x)|}{P(p-1)(x)(|t-s_\delta|^{1/2} + |x-x|)^{(\alpha+\varepsilon)/2}}  \\
    &\le \; \big[\mathcal{K}^o v_{\delta}\big]_{\alpha+\varepsilon} \le \; L \lVert v_{\delta} \rVert_{\widetilde{\mathscr{H}}^{1,2+\alpha}_{P(p)}} \; \le \; C_\chi L \big(\norm{h}_{\mathscr{H}^{2+\alpha}_{P(p)}}+ \delta^{-1}\norm{h}_{\mathscr{C}^{0}_{P(p)}}\big) 
\end{align*} 
where $s_\delta \in [0,T]$ is chosen such that $|t-s_\delta|=\delta$ and $C_\chi$ only depends on the choice of $\chi$. Hence, choosing 
$$\delta:=\operatorname{min}\{1,T/2\} \, \norm{h}_{\mathscr{C}^0_{P(p)}} \norm{h}_{\mathscr{H}^{2+\alpha}_{P(p)}}^{-1} \; \le \; 1$$
we obtain 
\begin{align}
     \lVert \mathcal{K}^o \bar{h}(t,\cdot) \rVert_{\mathscr{C}^0_{P(p-1)}} \; &\le \; C_\chi L \delta^{(\alpha+\varepsilon)/2}  \big(\norm{h}_{\mathscr{H}^{2+\alpha}_{P(p)}}+ \delta^{-1}\norm{h}_{\mathscr{C}^{0}_{P(p)}}\big) \notag \\
    &\le \; C_{T,\chi} L \norm{h}_{\mathscr{H}^{2+\alpha}_{P(p)}}^{1-(\alpha+\varepsilon)/2} \norm{h}_{\mathscr{C}^{0}_{P(p)}}^{(\alpha+\varepsilon)/2}
    \label{eq:uniqueness_proof_3},
\end{align}
where $C_{T,\chi}>0$ only depends on $T,\chi$.
For any $\theta:=(\alpha+\varepsilon)$-Hölder continuous function $z$ with Hölder constant $H:=\norm{z}_{\mathscr{H}^\theta}$, it holds that $|z(x)-z(y)|\le \operatorname{min}\{2\norm{z}_\infty, H|x-y|^\theta\}\le (2\norm{z}_\infty)^{1-\alpha/\theta}H^{\alpha/\theta}|x-y|^\alpha$ due to the classical Hölder inequality, and hence it follows that $[z]_\alpha\le 2^{1-\alpha/\theta}\norm{z}_\infty^{1-\alpha/\theta} H^{\alpha/\theta}$. In addition, since $\norm{z}_\infty\le H$, we deduce $\norm{z}_{\mathscr{H}^\alpha}\le 4\norm{z}_\infty^{1-\alpha/\theta} \norm{z}_{\mathscr{H}^\theta}^{\alpha/\theta}$. Consequently, we obtain from \eqref{eq:uniqueness_proof_3} (together with Remark \ref{rem:equivalent_norm_polynomial_growth}) and Condition \ref{condition:coefficient_regularity_extended_backward_PDE},
\begin{align}
    \lVert \mathcal{K}^o \bar{h}(t,\cdot) \rVert_{\mathscr{H}^\alpha_{P(p-1)}} \; &\le \; 4\lVert \mathcal{K}^o \bar{h}(t,\cdot) \rVert_{\mathscr{C}^0_{P(p-1)}}^{1-\alpha/\theta} \lVert \mathcal{K}^o \bar{h}(t,\cdot) \rVert_{\mathscr{H}^\theta_{P(p-1)}}^{\alpha/\theta} \notag \\
    &\le \; 4 C_{T,\chi}^{1-\alpha/\theta} L^{1-\alpha/\theta} \norm{h}_{\mathscr{H}^{2+\alpha}_{P(p)}}^{(1-\theta/2)(1-\alpha/\theta)} \norm{h}_{\mathscr{C}^{0}_{P(p)}}^{\theta/2(1-\alpha/\theta)} L^{\alpha/\theta} \norm{h}_{\mathscr{H}^{2+\alpha}_{P(p)}}^{\alpha/\theta} \notag \\
    &\le \; 4 C_{T,\chi}^{1-\alpha/\theta} L \norm{h}_{\mathscr{H}^{2+\alpha}_{P(p)}}^{1-\varepsilon/2} \norm{h}_{\mathscr{C}^{0}_{P(p)}}^{\varepsilon/2}.  \notag
\end{align}
By multiplication by $P(p-1)/P(p)$ we deduce, by virtue of Remark \ref{rem:equivalent_norm_polynomial_growth} and since $P(p-1)/P(p)\le 1$,
\begin{align}
    \lVert \mathcal{K}^o \bar{h}(t,\cdot) \rVert_{\mathscr{H}^\alpha_{P(p)}} \; \le \; 4 C_{T,\chi}^{1-\alpha/\theta} L \norm{h}_{\mathscr{H}^{2+\alpha}_{P(p)}}^{1-\varepsilon/2} \norm{h}_{\mathscr{C}^{0}_{P(p)}}^{\varepsilon/2}.  \label{eq:uniqueness_proof_4}
\end{align}

We can now proceed to showing uniqueness of the solutions. We will do so consecutively on short intervals. Denote by $v$ the extension $E_I\varphi$ of the solution $\varphi$ to \eqref{eq:extended_backward_PDE_for_uniqueness_lemma}, as introduced in the proof of Lemma \ref{lem:condition_R2_restriction_to_time_interval}. In particular, recall that $v=\varphi$ on $I\times \R$. By Lemma \ref{lem:condition_R2_restriction_to_time_interval}, $f=f_I:=-\lambda \mathcal{K}^o_I\varphi=-\lambda (\mathcal{K}^o_I v)|_I$ has the regularity required by Theorem \ref{thm:propagation_of_polynomial_control}. The restricted-time-interval estimate of the latter therefore gives
\begin{align*}\operatorname{sup}_{s \in I} \norm{\varphi(s,\cdot)}_{\mathscr{H}^{2+\alpha}_{P(p)}} \; \le \; C\operatorname{sup}_{s \in I} \norm{f_I(s,\cdot)}_{\mathscr{H}^{\alpha}_{P(p)}} \; &= \; \lambda C\operatorname{sup}_{s \in I} \norm{\mathcal{K}^o_I v(s,\cdot)}_{\mathscr{H}^{\alpha}_{P(p)}} \\ &= \; \lambda C \operatorname{sup}_{s \in I} \norm{\mathcal{K}^o v(s,\cdot)}_{\mathscr{H}^{\alpha}_{P(p)}}.
\end{align*}
Applying \eqref{eq:uniqueness_proof_4} yields
\begin{align}
     \operatorname{sup}_{s \in I} \norm{\varphi(s,\cdot)}_{\mathscr{H}^{2+\alpha}_{P(p)}} \; \le \; C\operatorname{sup}_{s \in I} \norm{f_I(s,\cdot)}_{\mathscr{H}^{\alpha}_{P(p)}} \; \le \; C_T L \operatorname{sup}_{s \in I} \norm{\varphi(s,\cdot)}_{\mathscr{H}^{2+\alpha}_{P(p)}}^{1-\varepsilon/2} \norm{\varphi(s,\cdot)}_{\mathscr{C}^{0}_{P(p)}}^{\varepsilon/2}\label{eq:uniqueness_proof_5}
\end{align}
for some constant $C_T>0$ only depending on $T$. Now, for each $t \in I$ and $x \in \R$, let $X_s^{t,x}$ be the solution of
$$
\diff X_s^{t,x}= b(X_s^{t,x},\mu_s)\diff s + a(X_s^{t,x},\mu_s)\diff W_s, \qquad X_t^{t,x}=x, \; t\le s \le T.$$ Indeed, here we understand $\mu_s$, $0\le s \le T$, as fixed and the regularity of the coefficients yields a strong solution to this ordinary time-inhomogeneous SDE. For each $R>0$, define the stopping time
$$
\tau^x_R := \operatorname{inf}\big\{ s\ge t: |X_s^{t,x}|\ge R \big\} \wedge d.
$$
Applying Itô's formula to the process $s\mapsto \varphi(s,X_s^{t,x})$
yields in particular
\begin{align*}
\varphi\bigl(\tau_R^x,X_{\tau^x_R}^{t,x}\bigr)-\varphi(t,x) =
\int_t^{\tau^x_R}
\Big[
\partial_s \varphi
+
\mathcal V_{s,\mu_s}\varphi
\Big]
\bigl(s,X_s^{t,x}\bigr)
\diff s \,
+
\int_t^{\tau^x_R}
a(X_s^{t,x},\mu_s)
\partial_x \varphi\bigl(s,X_s^{t,x}\bigr)
\diff W_s.
\end{align*}
By exploiting \eqref{eq:extended_backward_PDE_for_uniqueness_lemma}, taking expectation, and letting $R\to \infty$, we obtain
\begin{align}
\varphi(t,x)=
\lambda \mathbb E\Big[ \int_t^d \mathcal K^o v \bigl(s,X_s^{t,x}\bigr)\diff s\Big],
\label{eq:form_of_solution_to_uniqueness_lemma_backward_PDE}
\end{align}
where we have used Fubini-Tonelli's theorem and dominated convergence, which is applicable due to the moment condition \eqref{eq:moments_of_mu} and the fact that $\mathcal K^o v$ has at most polynomial growth of order $p-1$ in the spatial variable uniformly in time. Using Remark \ref{rem:refined_explanation_of_moment_propagation} and, again, \eqref{eq:uniqueness_proof_1}, gives 
\begin{align*} |\varphi(t,x)| \; &\le \; \lambda \operatorname{sup}_{s \in I} \lVert K^o v(s,\cdot) \rVert_{\mathscr{C}^0_{P(p-1)}} \int_t^d \E[P(p-1)(X^{t,x}_s)] \diff s \\
&\le \; C_T \lambda L (d-t) P(p-1)(x) \operatorname{sup}_{s \in I} \norm{\varphi(s,\cdot)}_{\mathscr{H}^{2+\alpha}_{P(p)}} 
\end{align*}
for some constant $C_T>0$ only depending on $T$ and $p$, which implies
$$ \operatorname{sup}_{s \in I} \norm{\varphi(s,\cdot)}_{\mathscr{C}^0_{P(p)}} \; \le \; C_T |I| L  \operatorname{sup}_{s \in I} \norm{\varphi(s,\cdot)}_{\mathscr{H}^{2+\alpha}_{P(p)}}. $$
Substituting this into \eqref{eq:uniqueness_proof_5} yields
\begin{align} \operatorname{sup}_{s \in I} \norm{\varphi(s,\cdot)}_{\mathscr{H}^{2+\alpha}_{P(p)}} \; \le \; \tilde C_{T} L^{1+\varepsilon/2} |I|^{\varepsilon/2} \operatorname{sup}_{s \in I} \norm{\varphi(s,\cdot)}_{\mathscr{H}^{2+\alpha}_{P(p)}} \label{eq:uniqueness_proof_6}
\end{align}
for some constant $\tilde C_T$ only depending on $T$ and $p$. 

Finally, choose $\ell_0>0$ such that $\tilde C_T L^{1+\varepsilon/2} \ell_0^{\varepsilon/2}<1$. Then, $\varphi \equiv 0$ on $[c,d]\times \R$ if $d-c\le \ell_0$. If $d-c>\ell_0$, divide the interval $[c,d]$ into pieces of length at most $\ell_0$ and repeat the same argument backwards finitely many times, starting from terminal time $d$, in order to obtain $\varphi \equiv 0$ on $[c,d]\times \R$. Indeed, this is possible since the prefactors on the right-hand side of \eqref{eq:uniqueness_proof_6} do not depend on $I$ (beyond its length) or $\varphi$ itself.
\end{proof}

\begin{proof}[Proof of Theorem \ref{thm:propagation_of_polynomial_control_extended_PDE_A}]
Let 
$$\Gamma_{\mathcal{V}}: \; \mathscr{H}^{\alpha/2,\alpha}_{P(p)}(I \times \R) \to \widetilde{\mathscr{H}}^{1,2+\alpha}_{P(p)}(I\times \R)=: \mathscr{X}_I$$
be the solution map for the parabolic backward PDE \eqref{eq:backward_PDE_standard_global_Schauder} formulated for the time interval $I$ instead of $[0,T]$, with $g\equiv 0$, i.e. it takes $f$ and returns a solution $\varphi$ to the backward PDE. By Theorem \ref{thm:propagation_of_polynomial_control} and Corollary \ref{cor:solution_polynomial_growth_time_derivative_bound}, this map is linear and bounded uniformly in $I$.

Suppose $\varphi \in \mathscr{X}_I$ solves 
\begin{align}
\partial_t \varphi(t,x) \; + \;\mathcal{V}_{t,\mu_t} \varphi (x) \; + \; \lambda(\mathcal{K}_{I}^o \varphi)(t,x) \; = \; f(t,x),
\qquad
\varphi(b,x)=0, \label{eq:proof_existence_backward_PDE_0}
\end{align}
for some $\lambda \in [0,1]$ and all $a\le t\le b$ and $x \in \R$. Let $J:=[c,d]\subseteq I$, $c<d$, be arbitrary. Proceeding in full analogy to the proof of Lemma \ref{lem:uniqueness_lemma_PDE_proof}, we obtain the estimates
\begin{align*}
\operatorname{sup}_{s \in J} \norm{\varphi(s,\cdot)}_{\mathscr{C}^0_{P(p)}} \; &\le \; C_T \Big( \norm{\varphi(d,\cdot)}_{\mathscr{H}^{2+\alpha}_{P(p)}} + |J|  \operatorname{sup}_{s \in I} \norm{f(s,\cdot)}_{\mathscr{H}^{\alpha}_{P(p)}} \\
& \qquad \qquad + |J| L  \operatorname{sup}_{s \in J} \norm{\varphi(s,\cdot)}_{\mathscr{H}^{2+\alpha}_{P(p)}}\Big), \\
    \operatorname{sup}_{s \in J} \norm{\varphi(s,\cdot)}_{\mathscr{H}^{2+\alpha}_{P(p)}} \; &\le \; C_T \Big(\norm{\varphi(d,\cdot)}_{\mathscr{H}^{2+\alpha}_{P(p)}} + \operatorname{sup}_{s \in I} \norm{f(s,\cdot)}_{\mathscr{H}^{\alpha}_{P(p)}}  \\
& \qquad \qquad + L \operatorname{sup}_{s \in J} \norm{\varphi(s,\cdot)}_{\mathscr{H}^{2+\alpha}_{P(p)}}^{1-\varepsilon/2} \norm{\varphi(s,\cdot)}_{\mathscr{C}^0_{P(p)}}^{\varepsilon/2} \Big),
\end{align*}
for some constant $C_T>0$ independent of $I,J, \lambda, f$ and $\varphi$. Applying Young's inequality to the last summand on the right-hand side of the second inequality yields
\begin{align*}
    \operatorname{sup}_{s \in J} \norm{\varphi(s,\cdot)}_{\mathscr{H}^{2+\alpha}_{P(p)}} \; &\le \; C' \Big(\norm{\varphi(d,\cdot)}_{\mathscr{H}^{2+\alpha}_{P(p)}} + \operatorname{sup}_{s \in I} \norm{f(s,\cdot)}_{\mathscr{H}^{\alpha}_{P(p)}} +  \operatorname{sup}_{s \in J} \norm{\varphi(s,\cdot)}_{\mathscr{C}^0_{P(p)}} \Big),
\end{align*}
with $C'>0$ depending only on $C_T, L$ and $\varepsilon$. Substituting the early bound for $\operatorname{sup}_{s \in J} \norm{\varphi(s,\cdot)}_{\mathscr{C}^0_{P(p)}}$ into this inequality gives

\begin{align*}
    \operatorname{sup}_{s \in J} \norm{\varphi(s,\cdot)}_{\mathscr{H}^{2+\alpha}_{P(p)}} \; &\le \; C'' \Big(\norm{\varphi(d,\cdot)}_{\mathscr{H}^{2+\alpha}_{P(p)}} + \operatorname{sup}_{s \in I} \norm{f(s,\cdot)}_{\mathscr{H}^{\alpha}_{P(p)}}\Big) +  C''' |J| \operatorname{sup}_{s \in J} \norm{\varphi(s,\cdot)}_{\mathscr{H}^{2+\alpha}_{P(p)}},
\end{align*}
for constants $C'', C'''>0$ dependending only on $C_T,T,L$ and $\varepsilon$. Choose $\ell_0>0$ such that $C'''\ell_0<1$. Then, whenever $|J|\le \ell_0$, we deduce
$$ \operatorname{sup}_{s \in J} \norm{\varphi(s,\cdot)}_{\mathscr{H}^{2+\alpha}_{P(p)}} \; \le \; \tilde C \Big(\norm{\varphi(d,\cdot)}_{\mathscr{H}^{2+\alpha}_{P(p)}} + \operatorname{sup}_{s \in I} \norm{f(s,\cdot)}_{\mathscr{H}^{\alpha}_{P(p)}}\Big) $$
for some constant $\tilde C>0$ depending only on $C'',C'''$ and $\ell_0$. Partitioning $I$ into finitely many intervals of length at most $\ell_0$ and
iterating backwards from $\varphi(b,\cdot)\equiv 0$ therefore gives
$$\sup_{s\in I}\lVert \varphi (s,\cdot)\rVert_{\mathscr{H}^{2+\alpha}_{P(p)}} \, \le\,  \hat{C} \operatorname{sup}_{s \in I} \norm{f(s,\cdot)}_{\mathscr{H}^{\alpha}_{P(p)}},$$
for some constant $\hat{C}>0$ depending on the same quantities as $\tilde C$. Together with Corollary \ref{cor:solution_polynomial_growth_time_derivative_bound}, up to a potential increase of the constant, we deduce
\begin{align} \sup_{s\in I}\lVert \varphi (s,\cdot)\rVert_{\mathscr{X}_I} \, \le\,  \hat{C} \operatorname{sup}_{s \in I} \norm{f(s,\cdot)}_{\mathscr{H}^{\alpha}_{P(p)}}.  \label{eq:proof_existence_extended_backward_PDE_1}
\end{align}

We will now use Schaefer’s fixed point theorem \cite{schaefer}, which applies to our case as follows. If an operator $\mathcal{S}:\mathscr{X}_I \to \mathscr{X}_I$ is continuous and compact, and the set 
\begin{align}\big\{\varphi \in \mathscr{X}_I \; : \; \varphi =\lambda \mathcal{S}(\varphi),\ \lambda \in[0,1] \big\} \label{eq:schaefer_set} \end{align}
is bounded, then $\mathcal{S}$ has a fixed point. Since 
\begin{align*} \mathscr{H}^{(\alpha+\varepsilon)/2,\alpha+\varepsilon}_{P(p-1)}(I\times \R) \subseteq \mathscr{H}^{\alpha/2,\alpha}_{P(p)}(I\times \R)\text{, the operator }\\
\mathcal{S}:= \Gamma_{\mathcal{V}}\circ (f-\mathcal{K}^o_I): \widetilde{\mathscr{H}}^{1,2+\alpha}_{P(p)}(I\times \R) \to \widetilde{\mathscr{H}}^{1,2+\alpha}_{P(p)}(I\times \R) 
\end{align*}
is well-defined and it is linear. By Lemma \ref{lem:condition_R2_restriction_to_time_interval}, Theorem \ref{thm:propagation_of_polynomial_control} and Corollary \ref{cor:solution_polynomial_growth_time_derivative_bound}, it is bounded, hence continuous. To see that it is also compact, note first that $\Gamma_{\mathcal{V}}$ is linear and bounded, the latter by Theorem \ref{thm:propagation_of_polynomial_control} and Corollary \ref{cor:solution_polynomial_growth_time_derivative_bound}. Thus, it is enough to show that $\mathcal{K}^o_I$ is compact, since the composition of a compact linear operator with a bounded linear, hence continuous, operator is compact. Making use of the Arzelà-Ascoli tightness characterisation on compact spatial intervals together with a Hölder interpolation, a classical procedure shows that the canonical embedding 
$$ \mathscr{H}^{(\alpha+\varepsilon)/2,\alpha+\varepsilon}_{P(p-1)}(I\times \R) \; \hookrightarrow \; \mathscr{H}^{\alpha/2,\alpha}_{P(p)}(I\times \R)$$
is compact, which gives the compactness of $\mathcal{K}^o_I$. Finally, it only remains to show that the set 
 \eqref{eq:schaefer_set} is bounded. This, however, follows immediately from \eqref{eq:proof_existence_extended_backward_PDE_1}. Schaefer's fixed point theorem now yields a solution to \eqref{eq:proof_existence_backward_PDE_0}. Uniqueness is implied by Lemma \ref{lem:uniqueness_lemma_PDE_proof}.
\end{proof}

\subsection{Verification of the conditions for Example \ref{ex:affine_kernel_structure}} \label{subsec:verification_example}
The coefficients $b$ and $a$ satisfy Assumption \ref{ass:Main}. Assumption \ref{condition:coefficient_regularity} is trivial in this affine example due to the uniform moment bounds of \eqref{eq:moments_of_mu} and the fact that $V$ grows at most polynomially with degree $p-1$. Further, the coefficients admit infinitely many linear functional derivatives in their measure variable hence Condition \ref{ass:R} is satisfied. Assumption \ref{ass:FB} holds true since, by definition of $\norm{\cdot}_\ast$ and on $\{t\le \tau^K_N\}$,
\begin{align*}
    \big| \langle \mathcal{V}_{t,\mu_t^N}\varphi- \mathcal{V}_{t,\mu_t}\varphi ,\mu_t^N\rangle \big| \; &= \; |\gamma| \, \int_{\R} | \phi(\langle V, \mu^N_t\rangle^{1/p}) -  \phi(\langle V, \mu_t\rangle^{1/p})| |\partial_x\varphi(t,x)| \, \mu^N_t(\diff x) \\  
    &\lesssim \; \frac{|\gamma|}{p\varepsilon^{1-1/p}}\,  \int_{\R} | \langle V, \mu^N_t -\mu_t \rangle| |\partial_x\varphi(t,x)| \, \mu^N_t(\diff x)\\
    &\lesssim \;  | \langle V, \mu^N_t -\mu_t \rangle| \int_{\R}  |x|^p \, \mu^N_t(\diff x) \; \lesssim \; \lVert \mu^N_t- \mu_t \rVert_*,
\end{align*}
where we have used the Lipschitz continuity of $\phi$, the mean-value theorem for the map $z\mapsto z^{1/p}$ on $[\varepsilon,\infty)$, as well as that $\varphi,V \in \mathscr{A}$.
Moreover, towards Condition \ref{condition:coefficient_regularity_extended_backward_PDE}, we have 
$$D_\nu b(x,\nu)(y)
=
\gamma\,\phi'\big(\langle V,\nu\rangle^{1/p}\big)
\frac{1}{p}\,
\langle V,\nu\rangle^{1/p-1}
V(y).$$ 
Using that $V \in \widetilde{\mathscr{H}}^{2+\alpha}_{P(p-1)}$, $V\ge \varepsilon$, that $\phi'$ is Lipschitz, $\varphi \in \mathscr{A}$ and the uniform moment bounds of \eqref{eq:moments_of_mu}, it is straightforward to see that Condition \ref{condition:coefficient_regularity_extended_backward_PDE} is satisfied.  

Computing the linear functional derivatives of the operators appearing in Assumption \ref{ass:R2}, we obtain
\begin{align*}
D_{\nu\nu}
\big\langle \mathcal{V}_{t,\nu}\varphi,\nu\big\rangle(y,z)
={}&
D_\nu b(y,\nu)(z)\,\partial_x\varphi(t,y)
+
D_\nu b(z,\nu)(y)\,\partial_x\varphi(t,z)
\\
&+
\int_{\mathbb R}
D_{\nu\nu}b(x,\nu)(y,z)\,
\partial_x\varphi(t,x)\,\nu(\diff x),
\end{align*}
again, if $\langle V, \nu\rangle >0$, and it is zero otherwise, where
\begin{align*}
D_{\nu\nu}b(x,\nu)(y,z)
={}&
\frac{\gamma}{p^2}
\Big[
\phi''\big(\langle V,\nu\rangle^{1/p}\big)
\langle V,\nu\rangle^{2/p-2}
\\
&\qquad
+
(1-p)\phi'\big(\langle V,\nu\rangle^{1/p}\big)
\langle V,\nu\rangle^{1/p-2}
\Big]
V(y)V(z).
\end{align*}
With $\eta^N_t:=\mu_t^N-\mu_t$ and $\mu^{\lambda, N}_t:=\mu_t+\lambda(\mu_t^N-\mu_t)$ this implies that
\begin{align}
\E\Big[&\big|\big\langle D_{\nu\nu}\langle \mathcal{V}_{t,\mu^{\lambda, N}_t}\varphi,\mu^{\lambda, N}_t\rangle,\;(\eta_t^N)^{\otimes2}\big\rangle \big| \ind_{\{t\le \tau^K_N\}} \Big] \notag \\ 
&\lesssim \; \E\big[\lVert \eta^N_t\rVert_* \,|\langle \partial_x\varphi(t,\cdot),\eta_t^N\rangle|\ind_{\{t\le \tau^K_N\}} \big] \, + \, \E\big[\lVert \eta^N_t\rVert_*^2\ind_{\{t\le \tau^K_N\}} \big],
\label{ex:specific_form_of_second_derivative_R2}
\end{align}
where we have also used \eqref{eq:moments_of_mu} to obtain the second bound on the right-hand side of \eqref{ex:specific_form_of_second_derivative_R2}. Let $K>1$, $R>0$, and $\varphi \in \mathscr{A}$ with $\lVert \varphi\rVert_{\mathscr{A}}\le R$. If we can bound $\lVert \eta^N_t\rVert_* |\langle \partial_x\varphi(t,\cdot),\eta_t^N\rangle| \le c_n(K,R) + C_N(K,R) \lVert \eta^N_t \rVert_*^2$ on $\{t\le \tau^N_K\}$, with $N^{1/2}c_N(K,R) \to 0$, and $N^{-1/2} C_N(K,R) \to 0$ as $N\to \infty$, then we deduce Assumption \ref{ass:R2}. Clearly, we cannot directly deduce this property since, in general, $\partial_x\varphi$ is not even twice differentiable. However, we do know that $\varphi \in \mathscr{A}$. In the sequel we describe a general approach to obtaining Assumption \ref{ass:R2} in such situations. Set
\begin{align} \gamma_N := N^\varsigma, \, \delta_N:= N^{-\beta}, \qquad \text{ for some } \beta \in (0,1/4) \text{ and } \varsigma \in (0, \beta/p). \label{eq:example_choices_of_gammaN_and_deltaN} \end{align}
Define the cut-off--mollification $\bar{\varphi}^N(t,\cdot):=\bar{\varphi}^{\gamma_N,\delta_N}(t,\cdot)$ as in Lemmas \ref{lem:mollification_lemma_control_of_derivatives}, \ref{lem:mollificiation_lemma_approximation}.
By Lemma \ref{lem:mollification_lemma_control_of_derivatives} applied with $r=p$ and $k=1,...,4$, there exists $C_1>0$ (independent of $t$, $N$, and $\varphi$) such that
\begin{equation*}
\operatorname{sup}_{0\le t\le T} \lVert \partial_x\bar\varphi^{N}(t,\cdot)\rVert_{\mathscr{H}^{2+\alpha}_{P(p)}}
\; \le \; C_1R \big(1+\delta_N^{-2}\big).
\end{equation*}
Consequently, by the definition of $\lVert \cdot\rVert_\ast$,
\begin{equation}
|\langle \partial_x\bar\varphi^{N}(t,\cdot),\eta_t^N\rangle|\; 
\le \; \lVert \partial_x\bar\varphi^{N}(t,\cdot)\rVert_{\mathscr{H}^{2+\alpha}_{P(p)}}\,\lVert \eta_t^N\rVert_\ast
\; \le \; C_1 R\big(1+\delta_N^{-2}\big)\,\lVert \eta_t^N\rVert_\ast. \label{ex:first_part_decomposition_bound}
\end{equation}
Next, Lemma \ref{lem:mollificiation_lemma_approximation} with $r=p$ gives that
\begin{equation}
\operatorname{sup}_{0\le t\le T}\operatorname{sup}_{|x|\le \gamma_N}\big|\partial_x\bar\varphi^{N}(t,x)-\partial_x\varphi(t,x)\big|
\; \le \; C_1R\,\delta_N\,(1+\gamma_N^{p}). \label{ex:approximation_error_bound}
\end{equation}
Decompose
\begin{equation}
\langle \partial_x\varphi(t,\cdot),\eta_t^N\rangle
=\langle \partial_x\bar\varphi^{N}(t,\cdot),\eta_t^N\rangle
+\langle \partial_x\varphi(t,\cdot)-\partial_x\bar\varphi^{N}(t,\cdot),\eta_t^N\rangle. \label{ex:decomposition_of_test_function_part}
\end{equation}
The first term is controlled by \eqref{ex:first_part_decomposition_bound}. For the second term, we use the triangle inequality and split 
\begin{align}
\big|\langle &\partial_x\varphi(t,\cdot)-\partial_x\bar\varphi^{N}(t,\cdot),\eta_t^N\rangle\big| \notag \\
&\le \; \int_{|x|\le \gamma_N}\big|\partial_x\varphi(t,x)-\partial_x\bar\varphi^{N}(t,x)\big|\,|\eta_t^N|(\diff x)
     \, + \, \int_{|x|> \gamma_N}\big|\partial_x\varphi(t,x)-\partial_x\bar\varphi^{N}(t,x)\big|\,|\eta_t^N|(\diff x)\notag\\
&\le \; 2\operatorname{sup}_{|x|\le \gamma_N}\big|\partial_x\varphi(t,x)-\partial_x\bar\varphi^{N}(t,x)\big|
     \, + \, \int_{|x|> \gamma_N}\big(|\partial_x\varphi(t,x)|+|\partial_x\bar\varphi^{N}(t,x)|\big)\,|\eta_t^N|(\diff x). \label{ex:decomposition_inequality_for_approximation_error}
\end{align}
While the first term on the right-hand side of \eqref{ex:decomposition_inequality_for_approximation_error} is controlled by \eqref{ex:approximation_error_bound}, for the second term in \eqref{ex:decomposition_inequality_for_approximation_error}, use the polynomial growth of $\partial_x \varphi(t,\cdot)$ and the polynomial growth bound for $\partial_x \bar{\varphi}^N(t,\cdot)$ (from Lemma \ref{lem:mollification_lemma_control_of_derivatives} with $k=1$) to obtain on $\{t\le \tau^K_N\}$ 
\begin{align}
\int_{|x|> \gamma_N}&\big(|\partial_x\varphi(t,x)|+|\partial_x\bar\varphi^{N}(t,x)|\big)\,|\eta_t^N|(\diff x) \notag \\
&\lesssim \; \gamma_N^{-(p+2)} R \int_{\R}1+|x|^{2p+2}\,|\eta_t^N|(\diff x) \notag \\
&\lesssim \; \gamma_N^{-(p+2)} R \big(\mathcal{M}_{2p+2}(\mu_t^N)+ \mathcal{M}_{2p+2}(\mu_t)+2\big) \; \lesssim \; C_2 \gamma_N^{-(p+2)}, \label{ex:tail_bound}
\end{align}
for some constant $C_2:=C_2(K,R)>0$, where we have used \eqref{eq:moments_of_mu} and the fact that $\mathcal{M}_{2p+2}(\mu_t^N)\le K$ on $\{t\le \tau^K_N\}$. Indeed, on $\{|x|>\gamma_N\}$ one has $1+|x|^p \lesssim \gamma_N^{-(p+2)}(1+|x|^{2p+2})$. Combining \eqref{ex:first_part_decomposition_bound}--\eqref{ex:tail_bound}, we obtain
\begin{align*}
|\langle \partial_x\varphi(t,\cdot),\eta_t^N\rangle| \; 
\lesssim \;
C_3 \big[ (1+\delta_N^{-2})\,\lVert\eta_t^N\rVert_\ast 
      +\delta_N(1+\gamma_N^p)+\gamma_N^{-(p+2)}\big], \qquad N\ge 1, \, 0\le t \le T,
\end{align*}
for some constant $C_3:=C_3(K,R)>0$. By Young's inequality, for any $\lambda>0$ we have
\begin{align*}
\lVert\eta_t^N\rVert_* \big[\delta_N(1+\gamma_N^p)+\gamma_N^{-(p+2)}\big]
 \; \le \; 
\tfrac12N^\lambda\lVert\eta_t^N\rVert_*^2
+\tfrac12N^{-\lambda}
\big[\delta_N(1+\gamma_N^p)+\gamma_N^{-(p+2)}\big]^2.
\end{align*}
Now, choose $\lambda>0$ such that 
$$ \max\!\left\{2\beta,\tfrac12-2(\beta-p\varsigma), \tfrac12-2(p+2)\varsigma\right\}<\lambda<\tfrac12,$$
and set 
\begin{align*}
    c_N(K,R) \; &:= \; C_3 N^{-\lambda}\big[ \delta_N(1+\gamma^p_N) + \gamma^{-(p+2)}_N\big]^2 \\
    C_N(K,R) \; &:= \; C_3 (1+\delta_N^{-2} + N^\lambda).
\end{align*}
The required convergence rates now follow from the choices of $\gamma_N, \delta_N$ and $\lambda$.

\subsection{Proofs to Section \ref{subsec:fluctuation_bound}}
\begin{proof}[Proof of Proposition \ref{lem:fluctuation_bound}]
     Let $g$ be an admissible function for \eqref{eq:defi_ast_norm_and_admissible_class}. By Theorem \ref{thm:propagation_of_polynomial_control} and Corollary \ref{cor:solution_polynomial_growth_time_derivative_bound}, applied on $[0,t]$, the backward PDE
     \begin{align} \partial_s \varphi_t(s,x)\, + \, \mathcal V_{s,\mu_s}\varphi_t(x)\, \; = \;0,
\qquad 
\varphi_t(t,x)\, = \, g(x)  \label{eq:backward_PDE_with_operator_V} \end{align}
    has a solution $\varphi_t$, with extension $\varphi$ to the whole time horizon $[0,T]$ (see Remark \ref{rem:extension_for_solutions_on_time_subintervals}), satisfying
    \begin{align} \norm{\varphi}_{\mathscr{A}} \; \le \; C_1\norm{g}_{\mathscr{H}^{2+\alpha}_{P(p)}} \; \le \; C_1 \label{eq:norm_varphi_bounded_fluctuation_bound_proof}
    \end{align}
    for some constant $C_1>0$ which does not depend on $\varphi, g$ or $t$.
     Applying Itô's formula to each $\varphi(t,X_t^{i,N})$ and then averaging over $i=1,...,N$ yields the decomposition
    $$ \langle g, \mu_t^N \rangle \; = \; \langle \varphi(t,\cdot), \mu_t^N \rangle \; = \; \langle \varphi(0,\cdot),\mu_0^N \rangle
+\int_0^t \langle \partial_s \varphi(s,\cdot) + \mathcal{V}_{s,\mu_s^N}\varphi ,\mu_s^N\rangle\,\diff s\; 
+ \;M^{N,\varphi}_t,$$
where
    $$ M^{N,\varphi}_t \; := \; \frac{1}{N}\sum_{i=1}^N\int_0^t \partial_x \varphi(s, X_s^{i,N})\,  a(X_s^{i,N},\mu_s^N)\,\diff W_s^i $$
is a continuous martingale. Now, from \eqref{eq:backward_PDE_with_operator_V}, $\partial_s \varphi(s,\cdot)=-\mathcal V_{s,\mu_s}\varphi(s,\cdot)(x)$, and so we obtain 
$$  \langle g , \mu_t^N \rangle \; = \; \langle \varphi(0,\cdot),\mu_0^N \rangle
+\int_0^t \langle \mathcal{V}_{s,\mu_s^N}\varphi -\mathcal{V}_{s,\mu_s}\varphi,\mu_s^N\rangle\,\diff s\; 
+ \;M^{N,\varphi}_t .$$
Similarly, $\langle g, \mu_t \rangle = \langle \varphi(0,\cdot),\mu_0 \rangle$ since $\langle \varphi(t,\cdot),\mu_t\rangle= \E[\varphi(t,X_t)]$ and after applying Itô's lemma the expectation of the martingale part equals zero. Subtracting $\langle g , \mu_t \rangle $ from $ \langle g , \mu_t^N \rangle$ then gives us
\begin{align*}
\langle g, \mu_t^N-\mu_t \rangle \; = \; \langle \varphi(0,\cdot),\mu_0^N-\mu_0 \rangle \; + \; \int_0^t \langle \mathcal{V}_{s,\mu_s^N}\varphi -\mathcal{V}_{s,\mu_s}\varphi ,\mu_s^N\rangle \, \diff s  \; + \; M^{N,\varphi}_t.
\end{align*}
Fix $K>0$. After squaring both sides, due to Assumption \ref{ass:FB} and \eqref{eq:norm_varphi_bounded_fluctuation_bound_proof}, we can bound 
\begin{align*}
\E\Big[ &|\langle g, \mu_t^N-\mu_t \rangle|^2 \ind_{\{t\le \tau^K_N\}} \Big] \\ &\le \; N^{-1} \operatorname{Var}_{\mu_0}(\varphi(0,\cdot))\; + \; 2L_K \int_0^t  \E\Big[ \lVert \mu^N_s-\mu_s \rVert_\ast^2 \ind_{\{s\le \tau^K_N\}} \Big] \, \diff s  \; + \; 2\E \big[\big(M^{N,\varphi}_t\big)^2 \big],
\end{align*} 
where we have used the Fubini-Tonelli theorem to exchange the time integral and the expectation. Moreover,
\begin{align}
\E\big[|\langle g, \mu_t^N-\mu_t \rangle|^2 \ind_{\{t\le \tau^K_N\}}\big]  \;
&\le \; 2L_K \int_0^t \E\Big[ \lVert\mu^N_s-\mu_s\rVert^2_{\ast} \ind_{\{t\le \tau^K_N\}} \Big] \, \diff s \; + \;\Oland(N^{-1}), \label{eq:inequality_fluctuation_bound_proof}
\end{align}
with $\Oland(N^{-1})$ independent of $s,t$ or $g, \varphi$. Indeed, the second inequality follows from the fact that the $W^i$ are independent and so $\E[ (M^{N,\varphi}_t)^2] = N^{-2} \sum_{i=1}^N \int_0^t \E[ \partial_x \varphi(s,X^{i,N}_s)^2 a(X^{i,N}_s, \mu^N_s)^2] \diff s \; = \;  N^{-1} \E[ \partial_x \varphi(s,X^{1,N}_s)^2 a(X^{1,N}_s, \mu^N_s)^2]$ by the exchangeability of the particles. Then, the quadratic growth of $a^2$ and the polynomial growth of $\partial_x \varphi$ of order $p$, together with the uniform moment bounds of $\mu^N$ (see \eqref{eq:moments_of_mu}; we only face moments up to order $2p+2$), yield that $\E[ (M^{N,\varphi}_t)^2]\le C_2/N$, where $ C_2>0$ is again independent of $g, \varphi$ or $t, N$. By monotone convergence, \eqref{eq:inequality_fluctuation_bound_proof} implies 
$$ \E\Big[\lVert \mu_t^N-\mu_t \rVert_\ast^2 \ind_{\{t\le \tau^K_N\}} \Big] \; \le \; 2L_K \int_0^t \E\Big[ \lVert\mu^N_s-\mu_s\rVert^2_{\ast} \ind_{\{t\le \tau^K_N\}} \Big] \, \diff s \; + \;\Oland(N^{-1}).$$
Note that the inequality holds for all $0\le t\le T$. Through an application of Grönwall's inequality, we finally deduce
$$ \operatorname{sup}_{0\le t \le T} \E\Big[ \lVert \mu^N_t - \mu_t \rVert_{\ast}^2 \ind_{\{t\le \tau^K_N\}} \Big] \; \le \; \frac{C_K}{N}$$
for some constant $C_K>0$.
\end{proof}

\begin{proof}[Proof of Proposition \ref{prop:empirical_measure_decomposition}]
    Let $\varphi \in \mathscr{A}$ and recall the definition of the operator $\mathcal{V}$ in \eqref{eq:definition_of_operator_mathcal_V}. In full analogy to the beginning of the proof of Proposition \ref{lem:fluctuation_bound}, we obtain
    $$ \langle \varphi(t,\cdot),(\mu^N_t-\mu_t)\rangle \; = \;   I^{N,\varphi}_t \; + \; \int_0^t \langle \partial_s \varphi(s,\cdot) + \mathcal{V}_{s,\mu^N_s}\varphi, \mu^N_s\rangle - \langle \partial_s \varphi(s,\cdot) + \mathcal{V}_{s,\mu_s}\varphi, \mu_s\rangle\, \diff s \; + \;  M^{N,\varphi}_t,$$
    where 
    $$I^{N,\varphi}_t:=\langle \varphi(0,\cdot), \, \mu^N_0-\mu_0 \rangle \qquad \text{ and } \qquad  M^{N,\varphi}_t := N^{-1} \sum_{i=1}^N \int_0^t \langle a(\cdot, \mu^N_s)  \partial_x \varphi(s,\cdot), \, \mu^N_s\rangle \diff W^i_s.$$ As described before \eqref{eq:backward_PDE_with_A_operator_Schauder}, $\nu \mapsto \langle \mathcal{V}_{s,\nu}\varphi, \nu \rangle $ is twice differentiable for each $s \ge 0$ at $\mu_s$, and so we expand 
    $$ \langle \mathcal{V}_{s,\mu^N_s}\varphi, \mu^N_s\rangle - \langle \mathcal{V}_{s,\mu_s}\varphi, \mu_s\rangle \; = \; \langle D_\nu \langle \mathcal{V}_{s,\mu_s}\varphi, \mu_s\rangle, \mu^N_s - \mu_s\rangle \; + \; R^{N, \varphi}_s,$$
    with \eqref{eq:final_decomposition_limit_behaviour_remainder} according to Corollary \ref{cor:bounding_the_remainder} and Conditions \ref{ass:FB}, \ref{ass:R2}, and \ref{condition:coefficient_regularity_extended_backward_PDE}. Thus, this yields 
    \begin{align}
        \langle \varphi(t,\cdot),(\mu^N_t-\mu_t)\rangle  \; = \;    I^{N,\varphi}_t \; + \; \int_0^t \langle \partial_s \varphi(s,\cdot) + \mathcal A_{s,\mu_s} \varphi, (\mu^N_s-\mu_s) \rangle \diff s + \int_0^t R^{N, \varphi}_s \diff s +  M^{N,\varphi}_t. \label{eq:decomp_before_PDE_argument_proof_empirical_measure_CLT}
    \end{align} 
Consider now the backward PDE \eqref{eq:backwards_PDE}. Due to Theorem \ref{thm:propagation_of_polynomial_control_extended_PDE_A}, \eqref{eq:backwards_PDE} admits a solution, for which we denote $\varphi=\varphi^{\phi,r} \in \mathscr{A}$ its extension (see Remark \ref{rem:extension_for_solutions_on_time_subintervals}). Replacing $\partial_s \varphi(s,\cdot) + \mathcal A_s \varphi$ by $-\phi$ in \eqref{eq:decomp_before_PDE_argument_proof_empirical_measure_CLT}, considering $t=r$, and using the terminal condition of \eqref{eq:backwards_PDE} yields 
    $$  \int_0^r \langle \phi(s,\cdot), (\mu^N_s-\mu_s) \rangle \diff s \; =  \;  \langle \varphi^{\phi,r}(0,\cdot), \, \mu^N_0-\mu_0 \rangle \; + \;   M^{N,\phi,r}_r \, + \, \int_0^r R^{N, \phi,r}_s \diff s $$
    where we have denoted the martingale $M^{N,\phi,r}:= M^{N,\varphi^{\phi,r}}$ and the remainder $R^{N,\phi,r}:=R^{N,\varphi^{\phi,r}}$ in order to emphasise the dependence on $\phi$ (and the terminal time $r$). Since the quantities $\lVert\varphi^{\phi,r}\rVert_{\mathscr{A}}$, $0\le r\le T$, are uniformly bounded, we deduce \eqref{eq:final_decomposition_limit_behaviour_remainder} from Corollary \ref{cor:bounding_the_remainder}.
\end{proof}

\begin{proof}[Proof of Lemma \ref{lem:quadratic_covariation_M_phi}] 
    Fix $0\le t\le T$ and let $0\le u,r\le T$. By independence of $W^i\indep W^j$, $i\neq j$, we obtain
    \begin{align}
         \Big\langle \sqrt{N} M^{N,\phi,u}, &\sqrt{N}M^{N, \tilde \phi,r}\Big\rangle_t \,
        = \; \frac{1}{N} \sum_{i=1}^N \int_0^{t\wedge u\wedge r} a(X^{i,N}_s, \mu^N_s)^2 \; \partial_x \varphi^{ \phi,u}(s,X^{i,N}_s)\partial_x \varphi^{ \tilde \phi,r}(s,X^{i,N}_s)\;  \, \diff s \notag \\
        &= \; \frac{1}{N} \sum_{i=1}^N \int_0^{t\wedge u\wedge r} a(X^{i}_s, \mu_s)^2 \; \partial_x \varphi^{ \phi,u}(s,X^{i}_s)\partial_x \varphi^{ \tilde \phi,r}(s,X^{i}_s)\, \diff s \; + \; R^{N,1}_t \; + \; R^{N,2}_t \label{eq:proof_martingale_CLT}
    \end{align}
    where the $X^i$ are synchronous couplings of the $X^{i,N}$ with respect to the McKean-Vlasov laws $(\mu_s)_{0\le s \le T}$ (that is driven by the same Brownian motions $W^i$), and 
   \begin{align*} 
   R^{N,1}_t \; &= \; \int_0^{t\wedge u\wedge r} \int_{\R} \big[a(x, \mu^N_s)^2-a(x, \mu_s)^2 \big] \; \partial_x \varphi^{ \phi,u}(s,x)\partial_x \varphi^{ \tilde \phi,r}(s,x) \,\mu^N_s(\diff x) \, \diff s, \\
   R^{N,2}_t \; &= \; \int_0^{t\wedge u\wedge r} \int_{\R} a(x, \mu_s)^2 \; \partial_x \varphi^{ \phi,u}(s,x)\partial_x \varphi^{ \tilde \phi,r}(s,x) \,(\mu^N_s-\bar{\mu}^N_s)(\diff x) \, \diff s,
   \end{align*}
    with $\bar{\mu}^N_s$ denoting the empirical law of the $X^i_s$. The first term on the right-hand side of \eqref{eq:proof_martingale_CLT} converges to 
    $$ \E\Big[ \int_0^{t\wedge u\wedge r} a(X_s, \mu_s)^2 \; \partial_x \varphi^{\phi,u}(s,X_s)\partial_x \varphi^{\tilde \phi,r}(s,X_s) \diff s \Big]$$
    almost surely according to the strong law of large numbers. Indeed, due to the polynomial growth of $a^2$ and the $\partial_x \varphi^{\phi,s}, \partial_x \varphi^{\tilde \phi,s}$, $0\le s\le T$, and the uniform moment bounds \eqref{eq:moments_of_mu}, the first moment of the integrals exists. 
    
    Now, due to \eqref{eq:moments_of_mu}, the random variables 
    $$\Gamma_N \; := \; \operatorname{max}_{q=1,...,2p+2} \big( \operatorname{sup}_{0\le t\le T} \mathcal{M}_{q}(\bar{\mu}^N_t) \, + \, \operatorname{sup}_{0\le t\le T} \mathcal{M}_{q}(\mu^N_t)\big), \qquad N\ge 1, $$ are uniformly integrable. Therefore, to every $\varepsilon>0$ we can find $R_\varepsilon>0$ such that 
    $$ \operatorname{sup}_{N\ge 1} \Pro( \Gamma_N > R_\varepsilon) \; \le \; \varepsilon.$$ 
    Let $\gamma>0$. Note that 
    \begin{align*} \Pro(|R^{N,1}_t| > \gamma) \; &\le \; \Pro \Big( C \int_0^t \mathcal{W}_p(\mu^N_s,\mu_s) N^{-1}\sum_{i=1}^N (1+|X^{i,N}_s|^2)^{p} (1+ |X^{i,N}_s|^2 + \mathcal{M}_p(\mu^N_s)^{2/p}) \diff s \, > \, \gamma \Big) \\
    &\le \; \Pro \Big( \Gamma_N \int_0^{t\wedge u\wedge r} \mathcal{W}_p(\mu^N_s,\mu_s) \diff s \, > \, \gamma/\tilde C \Big) \\
    & \le \; \varepsilon \, + \, \gamma^{-1} \tilde C R_\varepsilon \int_0^{t\wedge u\wedge r} \E\big[\mathcal{W}_p(\mu^N_s,\mu_s)\big] \diff s \; \le \; \varepsilon \; + \; \Oland(N^{-1/(2p)})
    \end{align*}
    for all $N\ge 1$ and some constants $C, \tilde C>0$, by virtue of the Lipschitz continuity of $a$, its growth control, the polynomial growth of the $\partial_x\varphi^{\phi,s},\partial_x\varphi^{\tilde \phi,s}$, as well as the Wasserstein rate bound given in Proposition \ref{prop:propagation_chaos_Wp}. Since $\varepsilon>0$ was chosen arbitrarily, we deduce $R^{N,1}_t \to 0$ in probability. We can make use of a similar argument in order to obtain that $R^{N,2}_t \to 0$ in probability. The maps $x\mapsto a(x, \mu_s)^2 \; \partial_x \varphi^{\phi,u}(s,x)\partial_x \varphi^{\tilde \phi,r}(s,x)$, $0\le s \le t$, are Lipschitz with a common Lipschitz constant on compacts. Restricted on a compact spatial interval $[-r,r]$ without loss of generality we can assume the functions to be uniformly Lipschitz, and the Kantorovich-Rubinstein theorem then yields that $\langle a(\cdot,\mu_s)^2\partial_x \varphi^{\phi,u}(s,\cdot)\partial_x \varphi^{\tilde \phi,r}(s,\cdot), \mu^N_s-\bar{\mu}^N_s\rangle$ is bounded by $W_1(\mu^N_s,\bar{\mu}^N_s)\le \mathcal{W}_p(\mu^N_s,\bar{\mu}^N_s)$, up to a constant uniformly in $s$. Now, proceed similarly by conditioning on the set $\{\Gamma_N\le R_\varepsilon\}$, then dividing the integrals into a part where the functions are restricted to a compact, and use Kantorovich-Rubinstein on the former while bounding the second suitably based on the moment condition $\{\Gamma_N\le R_\varepsilon\}$ as before.
\end{proof}

\begin{proof}[Proof of Theorem \ref{thm:functional_CLT_additive_fluctuations}] 
We begin by showing that the sequence $\Psi^N(\phi)$ is tight in the space $(\mathscr{C}([0,T];\R), \, \norm{\cdot}_\infty)$. In fact, due to \eqref{eq:stopping_time_vanishing_probabilities} it suffices to show that, for every $K>0$, the sequence $\Psi^{N,K}(\phi)$, $N\ge 1$, with
$$ \Psi^{N,K}(\phi)_t \; := \; \int_0^{t\wedge \tau^K_N} \langle \phi(s,\cdot), \sqrt{N}(\mu^N_s - \mu_s) \rangle \diff s, \qquad 0\le t\le T,$$
is tight in $(\mathscr{C}([0,T];\R), \, \norm{\cdot}_\infty)$. Fix $K>0$. We are going to apply the Kolmogorov-Chentsov tightness criterion (see \cite{billingsley}), stating that if we can find $\lambda,\beta>0$ and $C>0$ such that 
\begin{align}\E\Big[ \big|\Psi^{N,K}(\phi)_t -\Psi^{N,K}(\phi)_s \big|^\lambda \Big]  \; \le \; C |t-s|^{1+\beta} \label{eq:Kolmogorov_Chentsov_criterion} \end{align}
for all $0\le s,t\le T$ and $N\ge 1$, then this yields the desired tightness. Indeed, to see this make use of the Cauchy-Schwarz' inequality, the Fubini-Tonelli theorem, and the fact that $|\langle \phi(r,\cdot), \mu^N_r-\mu_r\rangle|^2\lesssim \lVert \mu^N_r-\mu_r\rVert_*^2$, which is valid since $\phi \in \mathscr{A}$, in order to obtain
\begin{align*}
    \E\Big[ \big|\Psi^{N,K}(\phi)_t -\Psi^{N,K}(\phi)_s \big|^2 \Big] \; &\le \; |t-s| \, \E\Big[  \int_{s\wedge \tau^K_N}^{t\wedge \tau^K_N} |\langle \phi(s,\cdot), \sqrt{N}(\mu^N_s - \mu_s) \rangle|^2 \diff s  \Big] \\
    &\le \; |t-s| \, N\int_{s\wedge \tau^K_N}^{t\wedge \tau^K_N} \E\Big[ |\langle \phi(s,\cdot), (\mu^N_s - \mu_s) \rangle|^2 \Big] \diff s  \\
    &\le \; |t-s|^2 \, N\operatorname{sup}_{0\le r\le T} \E\Big[ \lVert \mu^N_r-\mu_r\rVert_*^2 \ind_{\{r\le \tau^K_N\}} \Big].
\end{align*}
Finally, Proposition \ref{lem:fluctuation_bound} yields \eqref{eq:Kolmogorov_Chentsov_criterion}, and we deduce the tightness of the sequence $(\Psi^N(\phi))_{N\ge 1}$.

In a next step, we shall prove the convergence of finite-dimensional distributions. Classically, this then implies the desired weak convergence. First, note that, due to \eqref{eq:stopping_time_vanishing_probabilities} and \eqref{eq:final_decomposition_limit_behaviour_remainder}, the third term on the right-hand side of \eqref{eq:decomposition_additive_fluctuation} converges to zero in probability. First we show that for any $q\ge 1$, $(t_1,...t_q) \in [0,T]^q$ we have
$$ \sqrt{N}\Big(M^{N,\phi,t_1}_{t_1},...,M^{N,\phi,t_q}_{t_q}\Big) \; \conv{N\to \infty}{\mathcal{L}} \; (\tilde Z_{t_1},...,\tilde Z_{t_q})$$
for some centred Gaussian random variables $\tilde Z_{t_i}$ satisfying $\operatorname{Cov}(\tilde Z_{t_i}, \tilde Z_{t_j})=\Lambda_{t_i,t_j}^{\phi,\phi}(t_i\wedge t_j)$. Let $q\ge 1$ and $(t_1,...t_q) \in [0,T]^q$. According to Rebolledo's convergence theorem \cite[Prop.~1]{rebolledo} and Lemma \ref{lem:quadratic_covariation_M_phi}, the sequence of $q$-dimensional martingales $\sqrt{N}(M^{N,\phi,t_1},...,M^{N,\phi,t_q})$ converges to a continuous multivariate centred Gaussian martingale with covariance function $\Lambda^{\phi,\phi}_{t_i,t_j}(s)$, $i,j=1,...,q$, $0\le s\le T$, on the Skorokhod space hence in $(\mathscr{C}([0,T];\R^d), \, \norm{\cdot}_\infty)$. In particular, since the limit is continuous, coordinate projections are weakly continuous and we deduce the claimed convergence of finite-dimensional distributions. Moreover, by the classical central limit theorem, it holds that the finite-dimensional distributions of the process $r\mapsto \sqrt{N} \langle \varphi^{\phi,r}(0,\cdot),\mu^N_0-\mu_0\rangle=: I^{N,\phi}_r$ converge to multivariate centred Gaussian vectors with covariance $\operatorname{Cov}_{\mu_0}(\varphi^{\phi,t_i}(0,\cdot), \varphi^{\phi,t_j}(0,\cdot))$. Since the $M^{N,\phi,t_i}_{t_i}$ and the $I^{N,\phi}_{t_j}$ are mutually independent, the claim follows.
\end{proof}


\subsection{Proofs to Section \ref{sec:Stat}}
Before we are in a position to prove Theorem \ref{thm:CLT_for_estimators}, we will establish an intermediate decomposition given in Proposition \ref{prop:martingale_decomp} below. Toward this, for measurable functions $f,g:\R\times\mathcal P_p(\R)\to\R$ define
$$
F_{(f,g)}(\nu)
:=
\langle f(\cdot,\nu)g(\cdot,\nu),\nu\rangle.
$$
and 
$$ \phi_{(f,g)}(t,x) \; := \; D_\nu F_{(f,g)}(\mu_t)(x).$$
Then,
$
\Psi(f,g)
=
\int_0^T F_{(f,g)}(\mu_t)\diff t.
$ Further, set 
$$ Q^{N,j}_{t} :=  \frac 1 N \sum_{i=1}^N \int_0^t b_j(X^{i,N}_s, \mu^N_s) a(X^{i,N}_s, \mu^N_s) \, \diff W^i_s, \qquad j=1,...,d.$$

\begin{prop}[Approximate martingale-type decomposition of estimators] \label{prop:martingale_decomp}
    Under the regularity Conditions \ref{ass:R}, \ref{condition:coefficient_regularity_extended_backward_PDE} and Assumptions \ref{ass:FB}--\ref{ass:R3}, the estimators given in Section \ref{subsec:parametric_testing} decompose as
    \begin{align*}
        &\sqrt{N}\big(\widehat{\Psi}(b_j,b_k)_N- \Psi(b_j,b_k)\big) \; = \; \sqrt{N}\Big(\langle \varphi^{\phi_{(b_j,b_k)},T}(0,\cdot), \, \mu^N_0-\mu_0 \rangle \, + \,  M^{N, \, \phi_{(b_j,b_k)},T}_T\Big) \; + \; \oland_{\Pro}(1) ,\\
        &\sqrt{N}\big(\widehat{\Psi}(b_j,b)_N - \Psi(b_j,b)\big)\, = \, \sqrt{N} \Big(\sqrt{N}\langle \varphi^{\phi_{(b_j,b)},T}(0,\cdot), \, \mu^N_0-\mu_0 \rangle \, + \,  Q^{N,j}_{T}\, + \, M^{N, \, \phi_{(b_j,b)},T}_T\Big)+  \oland_{\Pro}(1),
    \end{align*} 
    where $\oland_{\Pro}(1)$ shall be understood with respect to convergence of $N\to \infty$.
\end{prop}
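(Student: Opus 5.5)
The plan is to reduce both decompositions to Proposition \ref{prop:empirical_measure_decomposition}, applied with $r=T$ and source $\phi=\phi_{(f,g)}$. This source is admissible by Assumption \ref{ass:R3}(i).

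For the Gram entries, I write
\[
\widehat{\Psi}(b_j,b_k)_N-\Psi(b_j,b_k)=\int_0^T \big(F_{(b_j,b_k)}(\mu^N_t)-F_{(b_j,b_k)}(\mu_t)\big)\diff t .
\]
By Remark \ref{rem:remark_to_condition_R1}, $F_{(b_j,b_k)}$ is twice differentiable along the segments $\mu_t+\lambda(\mu^N_t-\mu_t)$. A first-order expansion in the measure variable then gives
\[
F_{(b_j,b_k)}(\mu^N_t)-F_{(b_j,b_k)}(\mu_t)=\langle \phi_{(b_j,b_k)}(t,\cdot),\mu^N_t-\mu_t\rangle+R_t^{N}.
\]
By Assumption \ref{ass:R3}(ii), the second derivative is CLT fluctuation compatible. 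Corollary \ref{cor:bounding_the_remainder}, which rests on Proposition \ref{lem:fluctuation_bound} and Assumption \ref{ass:FB}, then yields $\sqrt N\sup_t\E[|R^N_t|\ind_{\{t\le\tau^K_N\}}]\to0$. The remainder here does not depend on $\varphi$, so $\Phi$ is constant in that argument and the corollary applies directly.

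Integrating in time and using Fubini, $\sqrt N\int_0^T R^N_t\diff t\,\ind_{\{T\le \tau^K_N\}}\to0$ in $L^1$. Combined with \eqref{eq:stopping_time_vanishing_probabilities}, I first fix $K$ large so that $\Pro(\tau^K_N\le T)$ is small uniformly in $N$, and then let $N\to\infty$. This gives $\oland_{\Pro}(1)$.

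The remaining linear term is $\sqrt N\int_0^T\langle\phi_{(b_j,b_k)}(s,\cdot),\mu^N_s-\mu_s\rangle\diff s$. Proposition \ref{prop:empirical_measure_decomposition} with $r=T$ rewrites it as the initial term plus $\sqrt N M^{N,\phi_{(b_j,b_k)},T}_T$ plus $\sqrt N\int_0^T R^{N,\phi,T}_s\diff s$. The last piece is $\oland_{\Pro}(1)$ by \eqref{eq:final_decomposition_limit_behaviour_remainder}, using the same localisation argument. This proves the first identity.

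For the mixed terms, I use the particle dynamics $\diff X^{i,N}_t=b(X^{i,N}_t,\mu^N_t)\diff t+a(X^{i,N}_t,\mu^N_t)\diff W^i_t$. This gives
\[
\widehat{\Psi}(b_j,b)_N=\frac1N\sum_{i=1}^N\int_0^T b_jb\,(X^{i,N}_t,\mu^N_t)\diff t+Q^{N,j}_T=\int_0^T F_{(b_j,b)}(\mu^N_t)\diff t+Q^{N,j}_T .
\]
Hence $\widehat{\Psi}(b_j,b)_N-\Psi(b_j,b)=\int_0^T(F_{(b_j,b)}(\mu^N_t)-F_{(b_j,b)}(\mu_t))\diff t+Q^{N,j}_T$. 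The integral is handled exactly as in the first case, and $Q^{N,j}_T$ is carried along unchanged. I read the displayed statement with the inner $\sqrt N$ on the initial term as a typo, so the claimed form is $\sqrt N(\langle\varphi(0,\cdot),\mu^N_0-\mu_0\rangle+Q^{N,j}_T+M_T)$. The integrability of $b_jb$ and the martingale property of $Q^{N,j}$ follow from the moment bounds \eqref{eq:moments_of_mu} and linear growth \eqref{eq:linear_growth}.

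I expect the main obstacle to be applying Corollary \ref{cor:bounding_the_remainder} to $F_{(f,g)}$, which has no $\varphi$ argument. I would handle it by regarding $F_{(f,g)}$ as trivially indexed by $\varphi$, so that Definition \ref{defi:CLT_fluctuation_compatibility} reduces to a single bound. A second point is the passage from the localised $L^1$ bounds to convergence in probability, which needs the two-step order of limits in $K$ and $N$ described above. Everything else is bookkeeping.
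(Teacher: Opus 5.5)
Your proposal is correct and follows the paper's own proof. You Taylor-expand $F_{(f,g)}$ via Corollary~\ref{cor:bounding_the_remainder}, with $\Phi$ trivially indexed by $\varphi$, and then apply Proposition~\ref{prop:empirical_measure_decomposition} with $r=T$ and source $\phi_{(f,g)}$; for the mixed terms you split off $Q^{N,j}_T$ using the particle dynamics, and your localisation via \eqref{eq:stopping_time_vanishing_probabilities} (fixing $K$ before letting $N\to\infty$) and your reading of the inner $\sqrt N$ as a typo fill in details the paper leaves implicit.
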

\begin{proof}[Proof of Proposition \ref{prop:martingale_decomp}] 
    For the estimators $\widehat{\Psi}(b_j,b_k)_N$, due to (ii) of Condition \ref{ass:R3} and Remark \ref{rem:remark_to_condition_R1}, we can perform an expansion \eqref{eq:Taylor_expansion_measure} in Corollary \ref{cor:bounding_the_remainder} with $\Phi(\varphi,t,\nu)=F_{(b_j,b_k)}(\nu)$, in order to obtain 
    \begin{align*}
        \sqrt{N} \big(\widehat{\Psi}(b_j,b_k)_N- \Psi(b_j,b_k)\big) \; &=\; \sqrt{N} \int_0^T F_{(b_j,b_k)}(\mu^N_t) - F_{(b_j,b_k)}(\mu_t)\, \diff t \\
        &= \; \int_0^T \langle D_\nu (F_{(b_j,b_k)}(\mu_t)), \sqrt{N}(\mu^N_t-\mu_t) \rangle \, \diff t \; + \; \oland_{\Pro}(1).
    \end{align*}
    Now, apply Proposition \ref{prop:empirical_measure_decomposition} with $\phi(t,x)=D_\nu (F_{(b_j,b_k)}(\mu_t))(x)$ and $t=T$. This yields the desired decomposition.
    
    Secondly, the SDE definition of $X^{i,N}$ in \eqref{eq:IPS} gives 
    $$ \sqrt{N}\big(\widehat{\Psi}(b_j,b)_N - \Psi(b_j,b)\big) \; = \; \sqrt{N} \, Q_T^{N} \; + \; \sqrt{N} \int_0^T F_{(b_j,b)}(\mu^N_t) - F_{(b_j,b)}(\mu_t) \, \diff t,$$
    and proceeding as for $\widehat{\Psi}(b_j,b_k)_N$ above yields the analogous decomposition for this part.
\end{proof}

\begin{remark}
    The unspecified convergence rates $\oland_{\Pro}(1)$ in Proposition \ref{prop:martingale_decomp} can be made precise if we have more information on the convergence rates in Definition \ref{defi:CLT_fluctuation_compatibility} and in  \eqref{eq:stopping_time_vanishing_probabilities}. Indeed, if, for example, $N^{1/2}c_N(\varphi) \vee N^{-1/2}C_N(\varphi)=\Oland(N^{-\varepsilon})$ for some $\varepsilon >0$, and \eqref{eq:stopping_time_vanishing_probabilities} vanishes at the same rate, then the convergence rate of the $\oland_{\Pro}(1)$ in Proposition \ref{prop:martingale_decomp} will also be of order $\Oland(N^{-\varepsilon})$.
\end{remark}

We can now prove Theorem \ref{thm:CLT_for_estimators}.
\begin{proof}[Proof of Theorem \ref{thm:CLT_for_estimators}]
   Based on Proposition \ref{prop:martingale_decomp}, one first shows an analogous result to Lemma \ref{lem:quadratic_covariation_M_phi} accommodating for the martingale term $Q^{N,j}+M^{N,\phi_{(b_j,b)},T}$. Then Theorem \ref{thm:CLT_for_estimators} can be proved along the same lines as for the convergence of finite-dimensional distributions in the second part of the proof of Theorem \ref{thm:functional_CLT_additive_fluctuations}.  
\end{proof}

Finally, we provide the proof of Theorem \ref{thm:CLT_for_projection_test}.
\begin{proof}[Proof of Theorem \ref{thm:CLT_for_projection_test}]
By Theorem~\ref{thm:CLT_for_estimators},
\[
\sqrt{N}
\begin{bmatrix}
\widehat{{\bf \Gamma}}_N-{\bf \Gamma}\\
\operatorname{vec}\big(\widehat{{\bf \Pi}}_N-{\bf \Pi}\big)
\end{bmatrix}
\; \conv{N\to\infty}{\mathcal L} \; 
\Norm_{d+d^2}({\bf 0},{\bf \Sigma}).
\]
In particular,
\[
\widehat{{\bf \Pi}}_N \; \conv{N\to\infty}{\Pro} \; {\bf \Pi}.
\]
Since ${\bf \Pi}$ is positive definite, it is invertible. By continuity of the smallest singular value, $
\Pro(\widehat{{\bf \Pi}}_N\in\operatorname{GL}_d(\R)) \rightarrow 1$.
Consequently, $\widehat{{\bf \Pi}}_N^\dagger=\widehat{{\bf \Pi}}_N^{-1}$ with probability tending to one, so it suffices to derive the limiting distribution using the ordinary inverse. Consider the map
\[
G:\R^d\times\operatorname{GL}_d(\R)\longrightarrow\R^r,
\qquad
G({\bf y},{\bf E})
:=
\Xi{\bf E}^{-1}{\bf y}.
\]
The map $G$ is continuously differentiable, and for
\({\bf h}\in\R^d\) and \({\bf H}\in\R^{d\times d}\), its differential is
\[
DG_{({\bf y},{\bf E})}[{\bf h},{\bf H}]
\, = \, 
\Xi{\bf E}^{-1}{\bf h}
-
\Xi{\bf E}^{-1}{\bf H}{\bf E}^{-1}{\bf y}.
\]
Since \({\bf \Pi}^{-1}{\bf \Gamma}=\theta^*\), it follows that
\[
DG_{({\bf \Gamma},{\bf \Pi})}[{\bf h},{\bf H}]
 \, = \, 
\Xi{\bf \Pi}^{-1}{\bf h}
-
\Xi{\bf \Pi}^{-1}{\bf H}\theta^*.
\]
If we denote $I_{d}$ the $d\times d$-identity matrix and $\otimes$ the Kronecker product, then
\[
{\bf H}\theta^*
=
\big((\theta^*)^\top\otimes I_{d}\big)\operatorname{vec}({\bf H}),
\]
and hence
\[
DG_{({\bf \Gamma},{\bf \Pi})}[{\bf h},{\bf H}]
 \; = \; 
J_G
\begin{bmatrix}
{\bf h}\\
\operatorname{vec}({\bf H})
\end{bmatrix},
\]
where
\[
J_G
:=
\left[
\Xi{\bf \Pi}^{-1},
\;
-(\theta^*)^\top\otimes\big(\Xi{\bf \Pi}^{-1}\big)
\right]
\in\R^{r\times(d+d^2)}.
\]
Indeed, by the Kronecker product identity, it holds that $\Xi \Pi^{-1}((\theta^*)^\top\otimes I_{d})=(\theta^*)^\top\otimes (\Xi \Pi^{-1})$. Applying the multivariate delta method to Theorem~\ref{thm:CLT_for_estimators} finally yields
\[
\sqrt{N}
\Big(
G(\widehat{{\bf \Gamma}}_N,\widehat{{\bf \Pi}}_N)
-
G({\bf \Gamma},{\bf \Pi})
\Big)
\; \conv{N\to\infty}{\mathcal L} \; 
\Norm_r
\Big(
{\bf 0},
J_G{\bf \Sigma}J_G^\top
\Big).
\]
\end{proof}

\begin{appendix}
\section{Appendix}\label{appn} 

\subsection{Uniform moment bounds and propagation of chaos}
\label{sec:appendix_propagation_of_chaos_Wp}
Throughout this section, we assume the fundamental Assumption \ref{ass:Main} to hold.

\begin{prop}[Moment propagation]
\label{prop:Wp_well_posedness_moment_propagation}
The $p$-th moments of the McKean--Vlasov SDE \eqref{eq:MKV} as well as the associated interacting particle system \eqref{eq:IPS} are uniformly bounded in the sense 
\begin{align}
\E[ \operatorname{sup}_{0\le t\le T} |X_t|^p]
\; + \; 
\operatorname{sup}_{N\ge1}
\E\big[ \operatorname{sup}_{0\le t\le T} \mathcal{M}_p(\mu_t^N)\big]
<\infty. \label{eq:appendix_moment_bound}
\end{align}
\end{prop}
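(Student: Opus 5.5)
We prove the two bounds in \eqref{eq:appendix_moment_bound} by the same scheme: It\^o's formula for $|x|^p$, the Burkholder--Davis--Gundy inequality, the linear growth \eqref{eq:linear_growth}, a localisation, and Gr\"onwall's lemma. We only use that $\mathcal M_p(\mu_0)<\infty$; this is implied by Assumption M2, or can simply be assumed here.

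For the McKean--Vlasov equation we first take well-posedness of \eqref{eq:MKV} in $\mathcal P_p$ for granted. This follows from the standard Picard/contraction argument on $\mathscr C([0,T];\mathcal P_p(\R))$ using the Lipschitz bound \eqref{eq:Lipschitz_continuity_of_coefficients}, since $\mathcal W_p(\mathcal L(X),\mathcal L(Y))^p\le \E|X-Y|^p$. Let $\tau_R:=\inf\{t:|X_t|\ge R\}$. For $p\ge2$ the function $x\mapsto|x|^p$ is $\mathscr C^2$, and It\^o's formula gives
\[
|X_{t\wedge\tau_R}|^p=|X_0|^p+\int_0^{t\wedge\tau_R}\Big(p|X_s|^{p-2}X_sb+\tfrac{p(p-1)}{2}|X_s|^{p-2}a^2\Big)\diff s+\int_0^{t\wedge\tau_R}p|X_s|^{p-2}X_s a\,\diff W_s .
\]
By \eqref{eq:linear_growth} and $\mathcal M_p(\mu_s)=\E|X_s|^p$, Young's inequality bounds the drift integrand by $C(1+|X_s|^p+\E|X_s|^p)$. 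For the stochastic integral, BDG gives
\[
\E\Big[\sup_{u\le t}\Big|\int_0^{u\wedge\tau_R}\cdots\diff W_s\Big|\Big]\le C\,\E\Big[\Big(\int_0^{t\wedge\tau_R}|X_s|^{2p-2}a^2\diff s\Big)^{1/2}\Big]\le \tfrac12\E\Big[\sup_{s\le t\wedge\tau_R}|X_s|^p\Big]+C\int_0^t\big(1+\E|X_s|^p\big)\diff s,
\]
where the last step pulls out $\sup|X_s|^{p/2}$ and applies Young's inequality. Set $h_R(t):=\E\sup_{s\le t\wedge\tau_R}|X_s|^p$, which is finite because of the localisation. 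Absorbing the factor $\tfrac12$ yields $h_R(t)\le C(1+\E|X_0|^p)+C\int_0^t (h_R(s)+\E|X_s|^p)\diff s$. The term $\E|X_s|^p$ itself satisfies the same inequality without the supremum (again by localisation and Fatou), so Gr\"onwall gives $\sup_t\E|X_t|^p<\infty$. Feeding this back and letting $R\to\infty$ by monotone convergence gives the first bound.

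For the particle system we apply the same computation to each $X^{i,N}$. The only change is that the measure term is now $\mathcal M_p(\mu^N_s)^{1/p}$, and Young's inequality turns it into $\mathcal M_p(\mu_s^N)=\frac1N\sum_j|X^{j,N}_s|^p$. Averaging over $i$ and using exchangeability (so that $\E\,\mathcal M_p(\mu^N_s)=\E|X^{1,N}_s|^p$), define $g_N(t):=\E\sup_{s\le t}|X^{1,N}_s|^p$, localised by a stopping time on $\max_i|X^{i,N}|$. We obtain $g_N(t)\le C(1+\mathcal M_p(\mu_0))+C\int_0^t g_N(s)\diff s$, and the constant $C$ depends only on $p$, $T$ and the constant in \eqref{eq:linear_growth}, not on $N$. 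Gr\"onwall gives $\sup_N g_N(T)<\infty$. Finally,
\[
\E\sup_t\mathcal M_p(\mu^N_t)\le \frac1N\sum_i\E\sup_t|X^{i,N}_t|^p=g_N(T),
\]
which gives the second bound uniformly in $N$.

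The main obstacle is the nonlinear coupling through the measure: for \eqref{eq:MKV} the bound involves $\E|X_s|^p$ itself, and for \eqref{eq:IPS} the empirical moment couples all particles. We handle this by first closing a Gr\"onwall inequality for the non-supremum moments, with localisation guaranteeing finiteness, and only then controlling the supremum through BDG. Uniformity in $N$ then rests entirely on exchangeability and on the $N$-independence of the constant in \eqref{eq:linear_growth}.
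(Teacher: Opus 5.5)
Your proof is correct and follows essentially the same route as the paper: It\^o's formula for $|x|^p$, the linear growth bound combined with Young's inequality, localisation, Gr\"onwall's lemma, and BDG to control the supremum. The only difference is minor: you apply BDG particle by particle and close Gr\"onwall directly for $\E\sup_{s\le t}|X^{1,N}_s|^p$ via exchangeability, which gives a slightly stronger per-particle bound, whereas the paper first closes Gr\"onwall for $\E\,\mathcal M_p(\mu^N_t)$ and then applies BDG to the averaged martingale $N^{-1}\sum_i M^{i,N}$.
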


\begin{proof}
Let $Y_t^N:=\mathcal{M}_p(\mu_t^N)$. Applying Itô's formula to $|X_t^{i,N}|^p$ gives, up to the usual smooth approximation at the origin,
\begin{align*}
\diff |X_t^{i,N}|^p
&=
p|X_t^{i,N}|^{p-2}X_t^{i,N}b(X_t^{i,N},\mu_t^N)\diff t \\
&\quad+
\frac{p(p-1)}2
|X_t^{i,N}|^{p-2}a(X_t^{i,N},\mu_t^N)^2\diff t
+
\diff M_t^{i,N},
\end{align*}
where $M^{i,N}$ is a local martingale. By the linear growth \eqref{eq:linear_growth} and Young's inequality,
$$
p|X^{i,N}_t|^{p-1}|b(X^{i,N}_t,\mu^N_t)|
+
\frac{p(p-1)}{2}|X^{i,N}_t|^{p-2}|a(X^{i,N}_t,\mu^N_t)|^2
\le
C\left(1+|X^{i,N}_t|^p+\mathcal{M}_p(\mu^N_t)\right).
$$
for some new constant $C>0$ independent of $i,N$, and $t$.
Averaging over $i=1,\ldots,N$ and taking expectations yields
$$
\diff \E[Y_t^N]
\le
2C\left(1+\E[Y_t^N]\right) \diff t.
$$
Since $\E[Y_0^N]=\mathcal{M}_p(\mu_0)< \infty$, Gronwall's lemma gives
$$
\operatorname{sup}_{N\ge1}
\operatorname{sup}_{0\le t\le T}
\E[\mathcal{M}_p(\mu_t^N)]
<\infty .
$$
Indeed, the continuity $t\mapsto \E[Y^N_t]$ follows from the continuity of the suitably stopped processes and Fatou's lemma. Now, with this at hand, proceed with a similar argument considering $\operatorname{sup}_{0\le s\le t} |Y^{N}_s|$. Then, apply the Burkholder-Davis-Gundy inequality to $\operatorname{sup}_{0\le s\le t} |N^{-1} \sum_{i=1}^N M^{i,N}_s|$, make use, once again, of the growth bound and Young's inequality, and exploit the already proven weaker moment bounds in order to deduce
$$\operatorname{sup}_{N\ge1}
\E\big[ \operatorname{sup}_{0\le t\le T} \mathcal{M}_p(\mu_t^N)\big]< \infty.$$

The moment estimate for the McKean--Vlasov solution follows analogously.
\end{proof}

\begin{remark} \label{rem:moment_propagation}
    Note that the moment propagation in Proposition \ref{prop:Wp_well_posedness_moment_propagation} is true for any moment $q\ge p$ if $\mathcal{M}_q(\mu_0)<\infty$.
\end{remark}

\begin{remark} \label{rem:refined_explanation_of_moment_propagation}
    A careful inspection of the proof of Proposition \ref{prop:Wp_well_posedness_moment_propagation} shows that, in particular, we have $\operatorname{sup}_{0\le t\le T} \E[|X_t|^p]\le C\E[|X_0|^p]$ for some constant $C>0$ only depending on $T$ and $p$. 
\end{remark}

\begin{lem}[Empirical convergence of independent McKean--Vlasov copies]
\label{lem:iid_empirical_Wp}
For every $N\ge 1$, let $\bar X^{1,N},\bar X^{2,N},\ldots, \bar X^{N,N}$ be independent copies of the solution to the McKean--Vlasov SDE \eqref{eq:MKV}, and define
$$
\bar\mu_t^N
:=
\frac1N\sum_{j=1}^N\delta_{\bar X_t^{j,N}}.
$$
Then, as $N\to \infty$,
\begin{align*}
\E\big[
\operatorname{sup}_{0\le t\le T}
\mathcal{W}_p(\bar\mu_t^N,\mu_t)^p
\big] \; 
\to \; 0.
\end{align*}
\end{lem}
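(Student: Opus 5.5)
The idea is to obtain uniform-in-time convergence from fixed-time convergence of i.i.d.\ empirical measures, combined with control of the time modulus of continuity via pathwise sup-moments. First, for each fixed $t$, the variables $\bar X^{j,N}_t$ are i.i.d.\ with law $\mu_t$, and $\mu_t$ has a finite moment of order $2p+3>p$ by \eqref{eq:moments_of_mu}. The classical result \cite[Thm.~1]{FournierGuillin} then gives $\E[\mathcal W_p(\bar\mu^N_t,\mu_t)^p]\le C\,\varepsilon_N$, with $\varepsilon_N\to0$ (for instance $\varepsilon_N\lesssim N^{-1/2}$ in dimension one with enough moments). The constant depends only on $\sup_t\mathcal M_{2p+3}(\mu_t)$, so the bound is uniform in $t\in[0,T]$.

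Next fix a grid $0=t_0<\dots<t_m=T$ with mesh $h$. Since $\mathcal W_p^p$ satisfies $\mathcal W_p(\mu,\nu)^p\le 3^{p-1}(\mathcal W_p(\mu,\rho)^p+\mathcal W_p(\rho,\sigma)^p+\mathcal W_p(\sigma,\nu)^p)$, we bound, for $t\in[t_k,t_{k+1}]$,
\[
\mathcal W_p(\bar\mu^N_t,\mu_t)^p\lesssim \mathcal W_p(\bar\mu^N_t,\bar\mu^N_{t_k})^p+\mathcal W_p(\bar\mu^N_{t_k},\mu_{t_k})^p+\mathcal W_p(\mu_{t_k},\mu_t)^p .
\]
The middle term is handled by a finite sum: $\E\max_k\mathcal W_p(\bar\mu^N_{t_k},\mu_{t_k})^p\le (m+1)C\varepsilon_N$. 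For the first term we use the trivial coupling $\mathcal W_p(\bar\mu^N_t,\bar\mu^N_s)^p\le N^{-1}\sum_j|\bar X^{j,N}_t-\bar X^{j,N}_s|^p$, which gives
\[
\E\Big[\sup_{|t-s|\le h}\mathcal W_p(\bar\mu^N_t,\bar\mu^N_s)^p\Big]\le \E\Big[\sup_{|t-s|\le h}|X_t-X_s|^p\Big]=:\omega(h),
\]
independently of $N$ by exchangeability. The same coupling with one copy gives $\sup_{|t-s|\le h}\mathcal W_p(\mu_t,\mu_s)^p\le\omega(h)$.

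The main obstacle is showing that $\omega(h)\to0$. Paths of $X$ are continuous, so $\sup_{|t-s|\le h}|X_t-X_s|^p\to0$ almost surely as $h\to0$. It is also dominated by $2^p\sup_t|X_t|^p$, which is integrable by \eqref{eq:moments_of_mu}, so dominated convergence yields $\omega(h)\to0$. Alternatively, BDG with the linear growth \eqref{eq:linear_growth} gives the explicit rate $\omega(h)\lesssim h^{1/2}$ up to constants, using higher moments and a Kolmogorov-type estimate; the dominated-convergence route suffices.

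Combining the estimates, $\limsup_N\E[\sup_t\mathcal W_p(\bar\mu^N_t,\mu_t)^p]\lesssim\omega(h)$ for every $h$. Letting $h\to0$ concludes the proof.
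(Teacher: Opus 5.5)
Your argument is correct, but it follows a different route from the paper.

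The paper lifts everything to path space. It treats $\frac1N\sum_j\delta_{\bar X^{j,N}}$ as an empirical measure on $(\mathscr C([0,T];\R),\norm{\cdot}_\infty)$. It then uses the law of large numbers (weak convergence together with convergence of the $p$-th moments $\frac1N\sum_j\sup_t|\bar X^{j,N}_t|^p$) to get $\mathcal W_p$-convergence to the path law $Q$ in $L^p$. Finally it pushes forward through the $1$-Lipschitz evaluation maps $\pi_t$, so that $\sup_t\mathcal W_p(\bar\mu^N_t,\mu_t)$ is dominated by a single path-space Wasserstein distance and the supremum over time costs nothing.

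You instead control finitely many grid times and interpolate with the modulus $\omega(h)$, which you obtain from the trivial coupling and dominated convergence.

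The two routes trade off as follows:
\begin{itemize}
\item The paper's proof is shorter and needs only $\E[\sup_t|X_t|^p]<\infty$. However, it rests on Wasserstein convergence of empirical measures on an infinite-dimensional Polish space.
\item Your proof works only with one-dimensional marginals and expectations. It can also be made quantitative: insert an explicit bound on $\omega(h)$ (via BDG and a Kolmogorov-type estimate, as you indicate) and choose $m\sim T/h$ optimally in $N$. This yields an explicit polynomial rate, which the path-space argument does not give.
\end{itemize}

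Note also that Fournier--Guillin is more than you need here. Qualitative fixed-time convergence $\E[\mathcal W_p(\bar\mu^N_{t_k},\mu_{t_k})^p]\to0$ at the finitely many grid points already suffices. Your proof therefore also goes through under $p$-th moments alone.
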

\begin{proof}
Let $Q$ denote the law of the McKean--Vlasov path $(X_t)_{0\le t\le T}$ on $\mathscr{C}([0,T];\mathbb R)$, equipped with its uniform norm $\norm{\cdot}_\infty$.
Let
$$
\bar Q^N
:=
\frac1N\sum_{j=1}^N\delta_{\bar X^{j,N}}
$$
be the empirical measure of the independent paths. By Proposition \ref{prop:Wp_well_posedness_moment_propagation},
$$
\int_{\mathscr{C}([0,T];\mathbb R)}
\lVert \lambda \rVert_\infty^p \, Q(\diff\lambda)
=
\E\left[
\operatorname{sup}_{0\le t\le T}|X_t|^p
\right]
<\infty.
$$
The (triangular array) law of large numbers gives that $\bar Q^N$ converges weakly to $Q$ almost surely, and it also yields the convergence of the $p$-th moments. Hence, it is well-known that this implies convergence of $\mathcal{W}_p(\bar Q^N, Q) \to 0$ almost surely and in $L^p$, where $\mathcal{W}_p$ is used here to denote the Wasserstein-$p$ distance on the Polish space $(\mathscr{C}, \norm{\cdot }_\infty)$.
For each $0\le t \le T$, define the coordinate evaluation map
$$
\pi_t:\mathscr{C}([0,T];\mathbb R)\to\mathbb R,
\qquad
\pi_t(\lambda)=\lambda_t.
$$
which is Lipschitz. Thus, 
$$
\mathcal{W}_p(\bar\mu_t^N,\mu_t)
=
\mathcal{W}_p(\bar Q^N \circ \pi_t^{-1}, Q \circ \pi_t^{-1})
\lesssim
\mathcal{W}_p(\bar Q^N,Q).
$$
Taking the supremum over $0\le t\le T$ gives
$$
\operatorname{sup}_{0\le t\le T}
\mathcal{W}_p(\bar\mu_t^N,\mu_t)
\lesssim
\mathcal{W}_p(\bar Q^N,Q).
$$
\end{proof}

\begin{lem}[Synchronous coupling estimate]
\label{lem:synchronous_coupling_Wp}
Let $\bar X^{i,N}$, $i=1,...,N$, be (independent) solutions to the McKean--Vlasov SDE \eqref{eq:MKV}, each driven by the same Brownian motions $W^i$ as the particles and starting from the same initial condition:
$$
\diff \bar X_t^i
=
b(\bar X_t^i,\mu_t)\diff t
+
a(\bar X_t^i,\mu_t)\diff W_t^i,
\qquad
\bar X_0^{i,N}= X_0^{i,N}.
$$
Then, it holds that
\begin{align*}
\E\Big[
\operatorname{sup}_{0\le t\le T}
|X_t^{i,N}-\bar X_t^i|^p
\Big]
\; \to \; 0.
\end{align*}
\end{lem}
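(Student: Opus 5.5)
We estimate the difference $\Delta^i_t := X^{i,N}_t-\bar X^i_t$ directly from the two SDEs, using that both are driven by the same $W^i$ and start at the same point, so $\Delta^i_0=0$. Then
\[
\Delta^i_t=\int_0^t\big[b(X^{i,N}_s,\mu^N_s)-b(\bar X^i_s,\mu_s)\big]\diff s+\int_0^t\big[a(X^{i,N}_s,\mu^N_s)-a(\bar X^i_s,\mu_s)\big]\diff W^i_s .
\]
Taking $\sup_{s\le t}|\cdot|^p$ and applying Hölder's inequality to the drift integral and the Burkholder--Davis--Gundy inequality (followed by Hölder, since $p\ge2$) to the stochastic integral, the Lipschitz bound \eqref{eq:Lipschitz_continuity_of_coefficients} gives
\[
\E\Big[\sup_{s\le t}|\Delta^i_s|^p\Big]\lesssim_{T,p}\int_0^t\E\big[|\Delta^i_s|^p\big]\diff s+\int_0^t\E\big[\mathcal W_p(\mu^N_s,\mu_s)^p\big]\diff s .
\]
Localisation by stopping times, if needed, is justified by the moment bounds of Proposition~\ref{prop:Wp_well_posedness_moment_propagation}.

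Next we split the measure term by the triangle inequality,
\[
\mathcal W_p(\mu^N_s,\mu_s)\le\mathcal W_p(\mu^N_s,\bar\mu^N_s)+\mathcal W_p(\bar\mu^N_s,\mu_s),
\]
where $\bar\mu^N_s$ is the empirical measure of $\bar X^1_s,\dots,\bar X^N_s$. For the first piece we use the coupling that pairs $X^{i,N}_s$ with $\bar X^i_s$:
\[
\mathcal W_p(\mu^N_s,\bar\mu^N_s)^p\le\frac1N\sum_{j=1}^N|\Delta^j_s|^p .
\]
By exchangeability of the pairs $(X^{j,N},\bar X^j)$, which share an identical structure and i.i.d.\ initial data and Brownian motions, the expectation of the right-hand side equals $\E|\Delta^i_s|^p$. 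The $\bar X^i$ are independent copies of the McKean--Vlasov solution, so the second piece is controlled by Lemma~\ref{lem:iid_empirical_Wp}:
\[
\varepsilon_N:=\E\Big[\sup_{s\le T}\mathcal W_p(\bar\mu^N_s,\mu_s)^p\Big]\to0 .
\]

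Combining these, $u(t):=\E[\sup_{s\le t}|\Delta^i_s|^p]$, which does not depend on $i$, satisfies
\[
u(t)\le C\int_0^tu(s)\diff s+CT\varepsilon_N .
\]
Grönwall's inequality then gives $u(T)\le CTe^{CT}\varepsilon_N\to0$. Finiteness of $u$, needed for Grönwall, follows from the moment bounds \eqref{eq:appendix_moment_bound} for both systems.

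The main obstacle is justifying that Lemma~\ref{lem:iid_empirical_Wp} applies to these particular copies. The $\bar X^i$ are driven by the given $W^i$ and initial conditions, so they are i.i.d.\ with law $Q$ for each fixed $N$, which is exactly the triangular-array setting of that lemma. The remaining point is to check that the exchangeability step is legitimate, i.e.\ that pathwise uniqueness for both systems makes each pair a symmetric measurable functional of $(X^j_0,W^j)_j$.
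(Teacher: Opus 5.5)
Your proposal is correct and follows the paper's proof essentially step for step. Both arguments combine BDG with the Lipschitz bound, split $\mathcal W_p(\mu^N_s,\mu_s)$ through $\bar\mu^N_s$, use the pairing coupling together with exchangeability, apply Grönwall, and invoke Lemma~\ref{lem:iid_empirical_Wp} for the remaining term; your remarks on localisation, finiteness of $u$, and exchangeability via pathwise uniqueness simply make explicit points the paper leaves implicit.
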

\begin{proof}
Set $\Delta_t^{i,N}:=X_t^{i,N}-\bar X_t^i$, and observe
\begin{align*}
\Delta_t^{i,N}
=
\int_0^t
\Big[
b(X_s^{i,N},\mu_s^N)-b(\bar X_s^i,\mu_s)
\Big]\diff s 
\quad+
\int_0^t
\Big[
a(X_s^{i,N},\mu_s^N)-a(\bar X_s^i,\mu_s)
\Big]\diff W_s^i .
\end{align*}
Making use of the Lipschitz continuity of $b$ and $a$, we have
$$
|b(X_s^{i,N},\mu_s^N)-b(\bar X_s^i,\mu_s)|
+
|a(X_s^{i,N},\mu_s^N)-a(\bar X_s^i,\mu_s)|
\le
L\big(
|\Delta_s^{i,N}|+\mathcal{W}_p(\mu_s^N,\mu_s)
\big).
$$
for all $i=1,...,N$ and $0\le s \le T$. Using Burkholder--Davis--Gundy's inequality, there exists a constant $C>0$ such that
\begin{align*}
\E\left[
\operatorname{sup}_{u\le t}|\Delta_u^{i,N}|^p
\right]
&\lesssim
\int_0^t
\E\left[
|\Delta_s^{i,N}|^p
+
\mathcal{W}_p(\mu_s^N,\mu_s)^p
\right]\diff s\\
&\lesssim
\int_0^t
\E\left[
|\Delta_s^{i,N}|^p
+
\mathcal{W}_p(\mu_s^N,\bar\mu_s^N)^p
+
\mathcal{W}_p(\bar\mu_s^N,\mu_s)^p
\right]\diff s.
\end{align*}
Clearly,
$$
\E\Big[
\mathcal{W}_p(\mu_s^N,\bar\mu_s^N)^p
\Big]
\le
\frac1N\sum_{j=1}^N \E\big[|\Delta_s^{j,N}|^p\big] = \E\big[|\Delta_s^{i,N}|^p\big]
$$
by exchangeability of the particles. This implies
\begin{align*}
\E\left[
\operatorname{sup}_{u\le t}|\Delta_u^{i,N}|^p
\right]
&\lesssim 
\int_0^t
\E\big[
\operatorname{sup}_{u\le s} |\Delta_s^{i,N}|^p \big] \diff s \, + \, \int_0^t
\E\big[
\mathcal{W}_p(\bar\mu_s^N,\mu_s)^p \big] \diff s.
\end{align*}
Applying Gronwall's lemma yields
$$
\E\left[
\operatorname{sup}_{u\le t}|\Delta_u^{i,N}|^p
\right]
\le
C_T
\int_0^T
\E\left[
\mathcal{W}_p(\bar\mu_s^N,\mu_s)^p
\right]\diff s.
$$
By virtue of Lemma \ref{lem:iid_empirical_Wp}, we deduce the claim.
\end{proof}


\begin{prop}[Propagation of chaos in $\mathcal{W}_p^p$]
\label{prop:propagation_chaos_Wp}
It holds that
\begin{align*}
\E\Big[
\operatorname{sup}_{0\le t\le T}
\mathcal{W}_p(\mu_t^N,\mu_t)^p
\Big]
\; \lesssim  \;  N^{-1/2} .
\end{align*}
Moreover, for every fixed $k\ge1$,
$$
(X^{1,N},\ldots,X^{k,N})
\conv{}{\mathcal{L}}
(X^1,\ldots,X^k)
$$
in $\mathscr{C}([0,T],\mathbb R)^k$, where $X^1,\ldots,X^k$ are independent copies of the solution to the McKean--Vlasov SDE \eqref{eq:MKV}.
\end{prop}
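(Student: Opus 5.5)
The plan is to combine the synchronous coupling of Lemma~\ref{lem:synchronous_coupling_Wp} with a quantitative rate for the empirical measure of i.i.d.\ McKean--Vlasov copies. We take $\bar X^{i}$, $i=1,\dots,N$, to be the independent McKean--Vlasov solutions driven by the same $W^i$ and started at $X^{i,N}_0$, and write $\bar\mu^N_t$ for their empirical measure. Using $\mathcal W_p(\mu^N_t,\mu_t)^p\lesssim \mathcal W_p(\mu^N_t,\bar\mu^N_t)^p+\mathcal W_p(\bar\mu^N_t,\mu_t)^p$ and the trivial coupling $\mathcal W_p(\mu^N_t,\bar\mu^N_t)^p\le N^{-1}\sum_i|X^{i,N}_t-\bar X^i_t|^p$, exchangeability gives
\[
\E\Big[\sup_{t\le T}\mathcal W_p(\mu^N_t,\mu_t)^p\Big]\lesssim \E\Big[\sup_{t\le T}|\Delta^{1,N}_t|^p\Big]+\E\Big[\sup_{t\le T}\mathcal W_p(\bar\mu^N_t,\mu_t)^p\Big].
\]
The Gronwall estimate in the proof of Lemma~\ref{lem:synchronous_coupling_Wp} already gives $\E[\sup_{t\le T}|\Delta^{1,N}_t|^p]\le C_T\int_0^T\E[\mathcal W_p(\bar\mu^N_s,\mu_s)^p]\diff s$. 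Everything therefore reduces to a rate for the i.i.d.\ empirical measure.

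For fixed $s$, the variables $\bar X^i_s$ are i.i.d.\ with law $\mu_s$, and $\sup_s\mathcal M_{q}(\mu_s)<\infty$ for $q=2p+3$ by \eqref{eq:moments_of_mu}. By \cite[Thm.~1]{FournierGuillin} in dimension one, with $q>2p$, we get $\E[\mathcal W_p(\bar\mu^N_s,\mu_s)^p]\le C\,\mathcal M_q(\mu_s)^{p/q}N^{-1/2}$, uniformly in $s$. This already yields $\sup_t\E[\mathcal W_p(\mu^N_t,\mu_t)^p]\lesssim N^{-1/2}$, together with the $N^{-1/2}$ bound for the coupling term.

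\textbf{Main obstacle.} The supremum in time inside the expectation of $\mathcal W_p(\bar\mu^N_t,\mu_t)^p$ is the hard part. I would first try a time discretisation with mesh $\delta_N=N^{-1}$, say. At grid points, a union bound with higher moments controls the maximum, though this may lose a logarithm, so I would use a Fournier--Guillin tail or concentration bound instead of expectations. Between grid points, I would use $\mathcal W_p(\bar\mu^N_t,\bar\mu^N_{t_k})^p\le N^{-1}\sum_i\sup_{|u-v|\le\delta_N}|\bar X^i_u-\bar X^i_v|^p$ and the analogous bound for $\mathcal W_p(\mu_t,\mu_{t_k})$. Via BDG and the linear growth \eqref{eq:linear_growth}, the modulus of continuity has $p$-th moment of order $(\delta_N\log(1/\delta_N))^{p/2}$, which is $o(N^{-1/2})$ for $p\ge2$. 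If the grid maximum cannot be handled without a logarithmic loss, a fallback is to exploit the one-dimensional representation $\mathcal W_p^p=\int_0^1|F^{-1}_{N,t}-F^{-1}_t|^p\,du$ and a chaining or maximal inequality for the empirical process indexed by $t$. This is the step where the claimed rate genuinely depends on the argument.

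Finally, the propagation of chaos for $(X^{1,N},\dots,X^{k,N})$ follows directly from Lemma~\ref{lem:synchronous_coupling_Wp}. Indeed, $\E[\sup_t|X^{i,N}_t-\bar X^i_t|^p]\to0$ for each $i\le k$, and $(\bar X^1,\dots,\bar X^k)$ are i.i.d.\ copies of \eqref{eq:MKV} with a law independent of $N$. Convergence in $L^p$ in $\mathscr C([0,T];\R)^k$ implies convergence in law.
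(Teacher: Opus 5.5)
Your reduction is the paper's own. It consists of the synchronous coupling with the McKean--Vlasov copies $\bar X^i$, the Gronwall bound from the proof of Lemma~\ref{lem:synchronous_coupling_Wp}, Fournier--Guillin with $q=2p+3$ at each fixed time, and the $k$-particle statement read off from Lemma~\ref{lem:synchronous_coupling_Wp}. All of that is correct and yields
\[
\E\Big[\sup_{0\le t\le T}\mathcal W_p(\mu^N_t,\bar\mu^N_t)^p\Big]+\sup_{0\le t\le T}\E\big[\mathcal W_p(\mu^N_t,\mu_t)^p\big]\;\lesssim\;N^{-1/2}.
\]
The gap is the one you flag yourself: nothing bounds $\E[\sup_{t\le T}\mathcal W_p(\bar\mu^N_t,\mu_t)^p]$ by $N^{-1/2}$, and the discretisation you sketch cannot do so. Under Assumption M2, the variables $\mathcal W_p(\bar\mu^N_{t_k},\mu_{t_k})^p$ in general have moments only of order at most $(2p+3)/p$. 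Fournier--Guillin's deviation bounds under polynomial moments also have only polynomial tails. A union bound over $M$ grid points therefore costs a factor $M^{1/m}$, not a logarithm. Keeping the interpolation error $(\delta\log(1/\delta))^{p/2}$ below $N^{-1/2}$ forces $M\gtrsim N^{1/p}$, so the loss is at least of order $N^{1/(2p+3)}$. Even under exponential moments, the logarithmic loss you anticipate would already break the claimed rate.

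What is needed is a genuine maximal inequality in $t$. A natural route is the one-dimensional estimate $\mathcal W_p(\nu,\mu)^p\le p2^{p-1}\int|x|^{p-1}|F_\nu(x)-F_\mu(x)|\diff x$. This turns $\sup_t\mathcal W_p(\bar\mu^N_t,\mu_t)^p$ into the supremum of an i.i.d.\ empirical process on path space, indexed by pairs $(t,g)$ with $g(0)=0$ and $|g'(x)|\le|x|^{p-1}$. That supremum would then be controlled by bracketing or chaining, using the $L^2$-H\"older continuity of $t\mapsto X_t$. Your fallback gestures at this but does not carry it out.

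You should know, however, that the paper's proof has exactly the same hole. After bounding the coupling term it concludes ``by triangle inequality'', which tacitly requires the very bound on $\E[\sup_t\mathcal W_p(\bar\mu^N_t,\mu_t)^p]$ at issue. Fournier--Guillin is a fixed-time estimate, and Lemma~\ref{lem:iid_empirical_Wp} gives convergence without a rate. So your argument and the paper's establish the same weaker statement displayed above. That fixed-time version is also all the paper uses later: in the proof of Lemma~\ref{lem:quadratic_covariation_M_phi} only $\int\E[\mathcal W_p(\mu^N_s,\mu_s)]\diff s$ enters.
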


\begin{proof}
Let $\bar X^1,\ldots,\bar X^N$ and $\bar\mu_t^N$ be as in Lemma~\ref{lem:synchronous_coupling_Wp}. Observe that 
$$
 \operatorname{sup}_{0\le t\le T}
\mathcal{W}_p(\mu_t^N,\bar\mu_t^N)^p
\le
\frac1N\sum_{j=1}^N
\operatorname{sup}_{0\le t\le T}
|X_t^{j,N}-\bar X_t^j|^p.
$$
Making use of the same procedure as in the proof of Lemma \ref{lem:synchronous_coupling_Wp} as well as \cite[Thm.~1]{FournierGuillin} (with, e.g., q=2p+3) yields the quantitative bound
    $$ \E\Big[\operatorname{sup}_{0\le t\le T} 
\mathcal{W}_p(\mu_t^N,\bar\mu_t^N)^p
\Big] \; \le \; \E\Big[
\operatorname{sup}_{0\le t\le T}
|X_t^{1,N}-\bar X_t^1|^p
\Big] \; \lesssim \; N^{-1/2},$$
and, hence, by triangle inequality and the fundamental inequality $|x+y|^p\le 2^p (|x|^p+|y|^p)$ we deduce the claim.
The finite-dimensional weak convergence in $\mathscr{C}$ follows in a straightforward way from Lemma \ref{lem:synchronous_coupling_Wp}.
\end{proof}

\subsection{Inequalities for mollifications}
Let $\varrho \in \mathscr{C}_c^\infty(\R)$ be a nonnegative and symmetric mollifier with $\operatorname{supp}(\varrho)\subset[-1,1]$ and $\int \varrho(z) \diff z = 1$, and define 
$$\varrho_\delta(x):= \delta^{-1} \varrho(x/\delta), \qquad \delta >0.$$
Furthermore, let $\chi_R \in \mathscr{C}^\infty(\R)$, $R>1$, be a smooth cut-off function such that $\chi_R\equiv 1$ on $[-2R,2R]$ and $\chi_R\equiv 0$ outside $[-3R, 3R]$, and with $\gamma:=\operatorname{sup}_{R>1}\operatorname{max}_{i=0,1,2} \lVert \partial_x^{(i)} \chi_R\rVert_\infty < \infty$.
For any $\varphi \in \mathscr{H}^{2+\alpha}_{P(r)}$, define the convolution
$$ \bar{\varphi}^{R,\delta}(x) := \; ((\varphi\chi_R) \star \varrho_\delta) \; = \; \int_{\R} \varphi(x-z) \chi_R(x-z) \varrho_\delta(z) \diff z.$$

\begin{lem} \label{lem:mollification_lemma_control_of_derivatives}
    For every integer $k\ge 0$, there exists $C>0$, independent of $\varphi$, $R$ and $\delta$, such that for every $\varphi \in \mathscr{H}^{2+\alpha}_{P(r)}$, $R>1$ and $0<\delta \le 1$,
    $$ \Big| \frac{\partial_x^{(k)} \bar{\varphi}^{R,\delta}(x)}{P(r)(x)} \Big| \; \le \; C \delta^{-((k-2)\vee 0)} \norm{\varphi}_{\mathscr{H}^{2+\alpha}_{P(r)}}. $$
\end{lem}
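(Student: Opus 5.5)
We split according to the order of differentiation. Write $\psi:=\varphi\chi_R$ and $P=P(r)$. By Remark~\ref{rem:equivalent_norm_polynomial_growth}, $\varphi\in\mathscr H^{2+\alpha}_{P(r)}$ gives $|\partial_x^j\varphi(y)|\le C\norm{\varphi}_{\mathscr H^{2+\alpha}_{P(r)}}P(y)$ for $j=0,1,2$. By the Leibniz rule and the uniform bound $\gamma$ on $\chi_R,\chi_R',\chi_R''$, the same estimate holds for $\partial_x^j\psi$, $j\le 2$, with a constant independent of $R$.

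\emph{Step 1: the case $k\le 2$.} Since $\partial_x^k\bar\varphi^{R,\delta}=(\partial_x^k\psi)\star\varrho_\delta$, we get
\[
|\partial_x^k\bar\varphi^{R,\delta}(x)|\le C\norm{\varphi}_{\mathscr H^{2+\alpha}_{P(r)}}\int P(x-z)\varrho_\delta(z)\diff z .
\]
For $|z|\le\delta\le1$ the elementary Peetre-type inequality $P(x-z)\le 2^{r/2}(1+|z|^2)^{r/2}P(x)\le 2^{r}P(x)$ applies. This yields the claim with exponent $0$, uniformly in $R$ and $\delta$.

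\emph{Step 2: the case $k\ge3$.} We move the excess $k-2$ derivatives onto the mollifier:
\[
\partial_x^k\bar\varphi^{R,\delta}=(\partial_x^2\psi)\star\partial_x^{k-2}\varrho_\delta,\qquad \partial_x^{k-2}\varrho_\delta(z)=\delta^{-1-(k-2)}(\partial^{k-2}\varrho)(z/\delta).
\]
Hence $\norm{\partial_x^{k-2}\varrho_\delta}_{L^1}=\delta^{-(k-2)}\norm{\partial^{k-2}\varrho}_{L^1}$, and this is supported in $[-\delta,\delta]$. Combining this with the pointwise bound on $\partial_x^2\psi$ and the same Peetre estimate gives $|\partial_x^k\bar\varphi^{R,\delta}(x)|\le C\delta^{-(k-2)}\norm{\varphi}_{\mathscr H^{2+\alpha}_{P(r)}}P(x)$.

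\emph{Possible refinement via the Hölder seminorm.} One could gain $\delta^{\alpha}$ by using the vanishing mean of $\partial^{k-2}\varrho$ and the Hölder seminorm of $\partial_x^2\psi/P$. The statement does not require this, so we skip it.

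\emph{Main obstacle.} The main point to check is that all constants are independent of $R$. This holds because only $\norm{\partial^i\chi_R}_\infty\le\gamma$ enters, and because the Hölder seminorm is never needed. The one subtlety is the step from the bound on $\varphi/P$ to bounds on $\partial^j\varphi/P$, which Remark~\ref{rem:equivalent_norm_polynomial_growth} provides.
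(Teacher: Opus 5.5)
Your proposal is correct and follows essentially the same route as the paper: for $k\ge 3$ you shift the excess $k-2$ derivatives onto the mollifier, you bound $\partial_x^{j}(\varphi\chi_R)$ for $j\le 2$ via the Leibniz rule and the uniform constant $\gamma$, and you control $P(r)(x-z)/P(r)(x)$ by the Peetre-type inequality. The only cosmetic difference is that you bound $P(r)(z)$ by a constant on the support $[-\delta,\delta]$ of the mollifier, whereas the paper keeps $\int P(r)(z)\,|\partial_x^{((k-2)\vee 0)}\varrho_\delta(z)|\diff z$ and rescales using $P(r)(\delta z)\le P(r)(z)$.
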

\begin{proof}
    Set $g_R:=\varphi\,\chi_R$. Then $\bar\varphi^{R,\delta}=g_R\star\varrho_\delta$. We first recall that there exists $C_r>0$ such that for all $x,z\in\R$,
        $$ \frac{P(r)(x-z)}{P(r)(x)} \;\le\; C_r\,P(r)(z).$$
Indeed, since $P(r)(x)=(1+x^2)^{r/2}$, we have
$$1+(x-z)^2\le 2(1+x^2)(1+z^2),$$
which implies $P(r)(x-z)\le 2^{r/2}P(r)(x)P(r)(z)$.
Since, in particular, $g_R\in \mathscr{C}^{2}(\R)$ and $\varrho_\delta\in \mathscr{C}^\infty_c(\R)$, standard arguments using dominated convergence and integration by parts yield that 
    $$ \partial_x^{(k)}\bar\varphi^{R,\delta} \;=\; \partial_x^{(k)}(g_R\star\varrho_\delta) \;=\; \begin{cases}
        (\partial_x^{(k)}g_R)\star \varrho_\delta, & k\le 2,\\[2pt]
        (\partial_x^{(2)}g_R)\star (\partial_x^{(k-2)} \varrho_\delta), & k\ge 3.
    \end{cases}$$
Then, for $k\ge 0$ we deduce
\begin{align*}
    \Big| \frac{\partial_x^{(k)} \bar{\varphi}^{R,\delta}(x)}{P(r)(x)} \Big| \; &\le \; C_r \Big(\frac{|\partial_x^{(k\wedge 2)}g_R|}{P(r)}\Big)\star (P(r) |\partial_x^{((k-2)\vee 0)} \varrho_\delta|) \\
    &\le \; C_r \Big(\sum_{\ell=0}^{k\wedge 2} \binom{k\wedge 2}{\ell}\,\frac{|\partial_x^{(\ell)}\varphi|}{P(r)}\,|\partial_x^{((k\wedge 2)-\ell)}\chi_R|\Big)\star (P(r) |\partial_x^{((k-2)\vee 0)} \varrho_\delta|)\\
    &\le \; C_r\gamma \norm{\varphi}_{\mathscr{H}^{2+\alpha}_{P(r)}} \int_{\R} P(r)(z) |\partial_x^{((k-2)\vee 0)} \varrho_\delta(z)| \diff z\\
        &\le \; C_r\gamma \norm{\varphi}_{\mathscr{H}^{2+\alpha}_{P(r)}} \delta^{-((k-2)\vee 0)} \int_{\R} P(r)(z) |\partial_x^{((k-2)\vee 0)} \varrho(z)| \diff z,
\end{align*}
The last inequality uses $\delta\le 1$ and $P(r)(\delta z)\le P(r)(z)$. This yields the claim.
\end{proof}

\begin{lem} \label{lem:mollificiation_lemma_approximation}
    There exists $C>0$, independent of $\varphi$, $R$ and $\delta$, such that for every $\varphi \in \mathscr{H}^{2+\alpha}_{P(r)}$, $0<\delta\le 1$ and $R>1\vee \delta$,
    $$  \operatorname{max}_{k=0,1} \, \operatorname{sup}_{|x|\le R}  \, \big| \partial_x^{(k)}\bar{\varphi}^{R,\delta}(x) - \partial_x^{(k)} \varphi(x) \big| \; \le \; C \delta (1+R^r) \norm{\varphi}_{\mathscr{H}^{2+\alpha}_{P(r)}}, $$
    and
    $$ \operatorname{sup}_{|x|\le R} \, \big| \partial_x^{(2)}\bar{\varphi}^{R,\delta}(x) - \partial_x^{(2)}\varphi(x) \big| \; \le \; C \delta^\alpha (1+R^r) \norm{\varphi}_{\mathscr{H}^{2+\alpha}_{P(r)}}. $$
\end{lem}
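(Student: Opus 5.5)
The plan is to treat the region $|x|\le R$ directly. The cut-off $\chi_R$ equals $1$ on $[-2R,2R]$, and the mollifier $\varrho_\delta$ is supported in $[-\delta,\delta]\subset[-R,R]$ because $R>\delta$. Hence for $|x|\le R$ and $|z|\le\delta$ we have $|x-z|\le 2R$, and therefore $\chi_R(x-z)=1$. On this region the cut-off can be dropped:
$$\bar\varphi^{R,\delta}(x)=\int_{\R}\varphi(x-z)\varrho_\delta(z)\diff z,\qquad |x|\le R.$$
Differentiating under the integral for $k\le 2$ (justified exactly as in the proof of Lemma \ref{lem:mollification_lemma_control_of_derivatives}, since $\varphi\in\mathscr{C}^2$), we get, using $\int\varrho_\delta=1$,
$$\partial_x^{(k)}\bar\varphi^{R,\delta}(x)-\partial_x^{(k)}\varphi(x)=\int_{\R}\big(\partial_x^{(k)}\varphi(x-z)-\partial_x^{(k)}\varphi(x)\big)\varrho_\delta(z)\diff z .$$

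For $k=0,1$, apply the mean value theorem to obtain
$$|\partial_x^{(k)}\varphi(x-z)-\partial_x^{(k)}\varphi(x)|\le |z|\sup_{|y-x|\le\delta}|\partial_x^{(k+1)}\varphi(y)|.$$
By Remark \ref{rem:equivalent_norm_polynomial_growth}, $|\partial_x^{(j)}\varphi(y)|\lesssim P(r)(y)\norm{\varphi}_{\mathscr{H}^{2+\alpha}_{P(r)}}$ for $j\le 2$. For $|y|\le R+\delta\le 2R$ this gives $P(r)(y)\le (1+4R^2)^{r/2}\lesssim 1+R^r$. Since $\int|z|\varrho_\delta(z)\diff z\le\delta$, the first claim follows.

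For $k=2$, use the weighted Hölder seminorm instead. Write $\partial_x^{(2)}\varphi=P(r)\,u$ with $u:=\partial_x^{(2)}\varphi/P(r)$. Then
$$|\partial_x^{(2)}\varphi(x-z)-\partial_x^{(2)}\varphi(x)|\le P(r)(x-z)\,|u(x-z)-u(x)|+|u(x)|\,|P(r)(x-z)-P(r)(x)|.$$
The first term is at most $(1+R^r)[u]_\alpha|z|^\alpha$, up to a constant. For the second term, $P(r)$ is $\mathscr C^1$ with $|P(r)'|\lesssim P(r-1)$, so it is at most $\norm{u}_\infty(1+R^{r-1})|z|\le \norm{u}_\infty(1+R^r)\delta^\alpha$, since $\delta\le1$. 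Remark \ref{rem:equivalent_norm_polynomial_growth} bounds both $[u]_\alpha$ and $\norm{u}_\infty$ by a constant times $\norm{\varphi}_{\mathscr{H}^{2+\alpha}_{P(r)}}$. Integrating against $\varrho_\delta$, with $\int|z|^\alpha\varrho_\delta\le\delta^\alpha$, yields the second estimate.

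The only genuine subtlety is ensuring that the cut-off is invisible on $|x|\le R$, which is where the hypothesis $R>1\vee\delta$ enters. The rest is bookkeeping of the polynomial weight, and the constants depend only on $r$, $\alpha$ and $\varrho$.
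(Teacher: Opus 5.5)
Your proof is correct and follows essentially the same route as the paper's: you drop the cut-off on $|x|\le R$ using $\operatorname{supp}\varrho_\delta\subset[-\delta,\delta]$ and $\chi_R\equiv 1$ on $[-2R,2R]$, then apply the mean value theorem with the weighted derivative bounds for $k=0,1$ and the weighted Hölder seminorm for $k=2$. Your splitting $\partial_x^{(2)}\varphi=P(r)\,u$, together with the mean-value bound on $P(r)$, only spells out the step that the paper summarises as ``the equivalent weighted Hölder norm gives'' the local $|z|^\alpha$ estimate.
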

\begin{proof}
   Fix $0<\delta\le 1$, $R>1\vee \delta$ and let $x\in[-R,R]$, $k=0,1,2$. Write
    \begin{align}
        \partial_x^{(k)} \bar\varphi^{R,\delta}(x)-\partial_x^{(k)}\varphi(x) \; &=\;  \int_{\R} \varrho_\delta(z)\,\big(\partial_x^{(k)} (\varphi \chi_R)(x-z)-\partial_x^{(k)} \varphi(x)\big)\,dz \label{eq:appendix_1} 
\end{align}
By the choice of $\varrho$, $\operatorname{supp}(\varrho_\delta)\subset[-\delta,\delta]$. For $|z|\le\delta\le R$ we have $|x-z|\le 2R$, and by construction $\chi_R\equiv 1$ on $[-2R,2R]$. Therefore, $\varphi \chi_R (x-z)=\varphi(x-z)$ for all $z\in\operatorname{supp}(\varrho_\delta)$, hence $\partial_x^{(k)} (\varphi \chi_R)(x-z)=\partial_x^{(k)} \varphi(x-z)$, and we can further rewrite \eqref{eq:appendix_1} as
    \begin{align*}
        \partial_x^{(k)} \bar\varphi^{R,\delta}(x)-\partial_x^{(k)}\varphi(x) \; = \; \int_{\R} \varrho_\delta(z)\,\big(\partial_x^{(k)} \varphi(x-z)-\partial_x^{(k)} \varphi(x)\big)\,dz.
\end{align*}
Now, for $k=0,1$, by the mean value theorem and the weighted derivative bounds for $\varphi$,
\begin{align*}
|\partial_x^{(k)} \bar\varphi^{R,\delta}(x)-\partial_x^{(k)}\varphi(x)| \; &\le \; C_r (1+R^r) \, \norm{\varphi}_{\mathscr{H}^{2+\alpha}_{P(r)}} \int_{\R} \varrho_\delta(z)\,|z| \,dz \\
&\le \; C_r \delta (1+R^r) \, \norm{\varphi}_{\mathscr{H}^{2+\alpha}_{P(r)}} \int_{\R} \varrho(z)\,|z|  \,dz.
\end{align*}
Similarly, for $k=2$, the equivalent weighted Hölder norm gives
$$ |\partial_x^{(2)}\varphi(x-z)-\partial_x^{(2)}\varphi(x)|
\; \le \; C_r(1+R^r)\norm{\varphi}_{\mathscr{H}^{2+\alpha}_{P(r)}}|z|^\alpha,
\qquad |x|\le R,\quad |z|\le\delta. $$
Consequently, by scaling,
\begin{align*}
|\partial_x^{(2)} \bar\varphi^{R,\delta}(x)-\partial_x^{(2)}\varphi(x)| \; \le \; C_r \delta^\alpha (1+R^r) \, \norm{\varphi}_{\mathscr{H}^{2+\alpha}_{P(r)}} \int_{\R} \varrho(z)\,|z|^\alpha  \,dz.
\end{align*}
\end{proof}

\end{appendix}

\printbibliography

\end{document}